\documentclass[11pt]{article}

\usepackage[margin=1in]{geometry}
\usepackage{amsmath,amssymb,amsthm,mathtools}
\usepackage{array,booktabs,tabularx}
\usepackage{microtype}
\usepackage[authoryear,round]{natbib}
\usepackage[hidelinks]{hyperref}
\hypersetup{
  pdftitle={On the Log Determinant of Sample Correlation Matrices under Gaussianity},
  pdfauthor={Hongru Zhao}
}

\newcolumntype{L}[1]{>{\raggedright\arraybackslash}p{#1}}
\newcolumntype{Y}{>{\raggedright\arraybackslash}X}

\theoremstyle{plain}
\newtheorem{theorem}{Theorem}[section]
\newtheorem{proposition}[theorem]{Proposition}
\newtheorem{lemma}[theorem]{Lemma}
\newtheorem{corollary}[theorem]{Corollary}
\theoremstyle{definition}

\newtheorem{remark}[theorem]{Remark}

\numberwithin{equation}{section}

\newcommand{\tr}{\operatorname{tr}}
\newcommand{\diag}{\operatorname{diag}}
\newcommand{\E}{\mathbb{E}}
\newcommand{\Var}{\operatorname{Var}}
\newcommand{\Cov}{\operatorname{Cov}}
\newcommand{\Log}{\operatorname{Log}}
\newcommand{\Rhat}{\widehat R}
\newcommand{\op}{\mathrm{op}}
\newcommand{\F}{\mathrm{F}}
\newcommand{\dd}{\,\mathrm d}
\newcommand{\1}{\mathbf 1}

\begin{document}

\title{On the Log Determinant of Sample Correlation Matrices under Gaussianity}
\author{Hongru Zhao\\
\small School of Statistics, University of Minnesota\\
\small 355 Ford Hall, 224 Church Street SE\\
\small Minneapolis, Minnesota 55455, United States\\
\small \texttt{zhao1118@umn.edu}}
\date{}
\maketitle

\begin{abstract}
We prove a central limit theorem for the log determinant of a Gaussian Pearson sample correlation matrix as the dimension diverges. Only two conditions are imposed: the population correlation matrix is positive definite, and the sample degrees of freedom are at least the dimension. Both are necessary for the ordinary log determinant to be finite. To the best of our knowledge, no previous central limit theorem covers this full nonsingular domain. It covers every aspect ratio from dilute growth to the square hard edge. No uniform lower or upper bound is imposed on the eigenvalues of the population correlation matrices: the smallest may approach zero and the largest may diverge. The proof develops a coordinatewise Wiener chaos reduction for the random diagonal normalization and combines it with an exact Wishart transform comparison. Geometrically, the statistic is twice the log volume of a random parallelotope spanned by standardized Gaussian coordinate vectors.
\end{abstract}

\medskip
\noindent\textbf{Keywords:}
central limit theorem; Gaussian sample; log determinant;
sample correlation matrix; Wiener chaos.

\section{Introduction}
\label{sec:introduction}

Let \(x_1,\ldots,x_n\) be independent \(p\)-dimensional Gaussian vectors,
let \(R_n\) be their population correlation matrix, and let \(\Rhat_n\) be
their Pearson sample correlation matrix. The statistic \(\log(|\Rhat_n|)\)
is proportional to the logarithm of the classical likelihood ratio for
complete independence and is the plug-in estimator of minus twice the
Gaussian total correlation
\cite{Wilks1932,Watanabe1960,RoweDay2019}. It also has an exact geometric meaning. Let
\(X=(x_1,\ldots,x_n)^\top\). After centering, the \(i\)th variable
\(X_{\cdot i}-\bar x_i\mathbf 1_n\) lies in the residual subspace
\(\mathbf 1_n^\perp\). Let \(g_i\in\mathbb R^m\), where \(m=n-1\),
denote its coordinates in an orthonormal basis, and put
\(u_i=g_i/\|g_i\|\). Then
\(\Rhat=(u_i^\top u_j)_{i,j=1}^p\), so \(|\Rhat|^{1/2}\) is the
parallelotope volume spanned by \(u_1,\ldots,u_p\). The theorem is thus a log volume limit for dependent
Gaussian directions, including near redundancy and \(m=p\), adjacent to the
random simplex and elliptical Gram literature
\cite{GusakovaHeinyThale2023,ShanLi2026}.

Finite dimensional distribution theory long predates this problem.
\cite{GuptaRathie1983} studied the determinant under a general Gaussian
population, and later work gives distributional or structural representations for
scale free scatter and correlation matrices
\cite{MathaiProvost2024,SebastianPrincy2024}. These results concern densities,
representations, or classical asymptotics, not the triangular array theorem
below.

When \(p\) diverges, beta product or determinant moment methods give null
central limit theorems in proportional regimes \cite{JiangYang2013} and at
all relative rates away from the last few gaps \cite{JiangQi2015}. The exact
square endpoint, where the \(m\times p\) residual data matrix is square, is
included in \cite{Rouault2007}. Exact digamma and trigamma moments appear in
\cite{RoweDay2019}; refinements include an Edgeworth expansion
\cite{XieSun2021} and a moderate deviation principle
\cite{BaiZhangLi2024}.

The nonidentity problem is more delicate. Passing from a sample covariance
matrix to a sample correlation matrix divides by random coordinatewise
sample variances, and these variances are mutually dependent when
\(R_n\ne I_p\). Several results solve important subregimes of this problem.
The rescaled correlation theorem of
\citet[Examples~3 and~5]{YinLiTianZheng2022} includes the
empirically centered logarithmic example
\[
\log(|R_n^{-1}\Rhat_n|)
=\log(|\Rhat_n|)-\log(|R_n|),
\]
exactly the determinant difference considered here. In its Gaussian specialization, the
variance has the same leading \(R_n\)-dependent term as ours, but the theorem
uses an interior proportional regime, independent components, analyticity, a
collection of normalized population limits involving \(R_n^{-1}\),
\(R_n^{-1/2}\), and \(R_n^{1/2}\). In the nonsingular proportional regime,
the result in \cite{ChenZhengZou2026} likewise gives the same difference by
choosing \(M_n=R_n^{-1}\) and the logarithm whenever its structural and
boundedness hypotheses hold.

The results of \citet[Theorems~2.1--2.2]{ParolyaEtAl2024}
treat an independent component model with finite fourth moments, without
assuming Gaussianity. The noncentered theorem holds under
\(p/n\to\gamma\in(0,1]\) with \(p\le n\), including the exact square
case \(p=n\). The centered theorem requires \(p<n\) and, when
\(p/n\to1\), imposes an additional approach-rate condition near the square
endpoint. Their asymptotic centering contains an
additional fourth moment correction proportional to
\[
(\mathbb E|x_{11}|^4-3)(C_{R^{1/2}}-1),
\]
which vanishes in the Gaussian case. Their assumptions require
\[
0<c\le\lambda_{\min}(R_n)
\le\lambda_{\max}(R_n)\le C<\infty
\qquad\text{uniformly in }n.
\]
Other proportional linear spectral
statistic results impose Gaussian or structural regularity
\cite{GaoEtAl2017,MestreVallet2017,YinZhengZou2023}. \cite{Li2025Unbounded} allows an unbounded population correlation matrix and
derives limiting spectral distributions and spiked eigenvalue limits, but
does not establish a central limit theorem for the sample correlation log
determinant.

An earlier Gaussian theorem covered broad relative rates of \(p\) and \(n\),
but assumed \(n>p+4\),
\(\inf_{n\ge6}\lambda_{\min}(R_n)>1/2\), and at least one of
\begin{equation}
\inf_n\frac{p_n}{n}>0,\qquad
\inf_n\frac1n\tr((R_n-I_p)^2)>0,\qquad
\sup_n\frac{p_n\|R_n-I_p\|_\infty}
{\|R_n-I_p\|_{\F}}<\infty
\label{eq:former-conditions}
\end{equation}
\cite[Theorem~1]{Jiang2019}. Those conditions support the auxiliary
determinant argument used there; they are not needed for the limiting law
proved here. In particular, a sequence may converge to \(I_p\) in operator
norm while failing every alternative in \eqref{eq:former-conditions}. Such
local sequences cannot be reduced either to a fixed alternative or to an
interior proportional theorem.

Under the null, uncentered universality extends in the interior to symmetric
regularly varying tails of index in \((3,4)\) \cite{HeinyParolya2024}, and
admits a sharp tail criterion there under
an independent entry heavy tail model
\cite{LiPanXieZhou2024}, and extends through the square case
\cite{LiLiuXieZhou2026}. These results do not transfer automatically to
empirical centering, which destroys entrywise independence outside Gaussianity.

The main distinction of this paper, formalized in
Theorem~\ref{thm:exact}, is that these restrictions are removed
simultaneously. In the Gaussian setting, this leaves the weakest possible
assumptions under which the ordinary log determinant is well-defined.
It covers every sequence \(p\to\infty\) and
\(m=n-1\ge p\), including dilute growth
(\(p/m\to0\)) and the square endpoint (\(p/m=1\)),
for an arbitrary positive definite population correlation matrix \(R_n\).

\subsection{Why correlation matrices without uniform eigenvalue bounds arise}

The absence of uniform lower and upper eigenvalue bounds on \(R_n\) is useful
in ordinary statistical models. In
a Gaussian one factor model, the equicorrelation matrix
\[
R_p=(1-\rho_p)I_p+\rho_p\1\1^\top,
\qquad
\lambda_{\min}(R_p)=1-\rho_p,
\qquad
\lambda_{\max}(R_p)=1+(p-1)\rho_p.
\]
A fixed positive \(\rho_p\) creates an order \(p\)
spike, while \(\rho_p\to1\) also drives the lower edge to zero. Dense
functional or longitudinal data provide a setting in which \(p\) is grid
resolution and \(n\) is the number of curves or subjects
\cite{HallMullerWang2006}. A concrete Gaussian model exhibits both phenomena.
For fixed \(\vartheta>0\), sampling a stationary Ornstein--Uhlenbeck trajectory
on a grid of mesh \(1/p\) gives the
Kac--Murdock--Szeg\H{o} matrix \cite{KacMurdockSzego1953}:
\[
R_p=\left(\exp(-\vartheta|i-j|/p)\right)_{i,j=1}^p.
\]
The adjacent difference Rayleigh quotient and the all ones Rayleigh
quotient, together with the maximum row sum, give
\[
\lambda_{\min}(R_p)\le1-\exp(-\vartheta/p)=O(p^{-1}),
\qquad
\lambda_{\max}(R_p)\asymp p.
\]
Such matrices are positive definite for
every finite \(p\), but lie outside theories requiring the smallest eigenvalue
to remain bounded away from zero and the largest eigenvalue to remain bounded
above.

\subsection{What is new in the proof}
\label{sec:new-proof}

The proof works directly with the random diagonal terms of the sample
covariance matrix. If
\[
Q=\sum_{\ell=1}^mG_\ell^2\sim\chi_m^2,
\qquad
h_m(G)=\log(Q)-\E(\log(Q)),
\]
then a direct Hermite calculation gives
\[
J_2h_m(G)=\frac{Q-m}{m},
\]
where \(J_2\) denotes projection onto the second Wiener chaos. After this
term is removed, the residual begins in the fourth chaos. Mehler's identity
then bounds the covariance of two residuals by the fourth power of their
underlying correlation. When the bound is summed over coordinates, the
entire nonlinear remainder is negligible after the theorem's normalization,
uniformly over \(R_n\).

Wiener chaos methods for random determinants are not new:
\cite{Notarnicola2023} developed matrix Hermite and Mehler formulas for
Gaussian determinants, and \cite{DiezTudor2023} isolated the dominant term in
the second Wiener chaos for logarithms of the independent chi square factors
in a Wishart determinant. Our literature search did not locate an earlier
central limit proof for the Gaussian Pearson correlation log determinant that
combines the exact coordinatewise second chaos projection with a Mehler
covariance bound valid for every positive definite correlation matrix \(R_n\). The
new proof strategy is this combination and its use in removing the earlier
restrictions on \(R_n\), not the projection, Mehler's identity, or Wiener
chaos for determinants by itself.

The second ingredient is an exact characteristic transform comparison. The
Wishart matrix gamma integral is classical, but the ratio between the leading
transforms under \(R_n\) and \(I_p\) reduces to
\[
\exp\left(-q\,\tr\left(\Log\left(I_p+\frac{\theta}{q}(R_n-I_p)\right)\right)\right).
\]
The second order term in the power series expansion of the log transform ratio in the complex transform parameter \(\theta\) yields the \(R_n\)-dependent contribution to the variance. Higher terms are
uniformly negligible because the theorem's normalization controls the
Frobenius norm of \(R_n-I_p\). No lower or upper eigenvalue bound on \(R_n\),
and no effective rank condition, enters the estimate.

\subsection{Contributions}
\label{sec:contributions}

\begin{enumerate}
\item We prove a Gaussian log determinant central limit theorem for every
positive definite \(R_n\), every sequence \(p\to\infty\), and every
\(m=n-1\ge p\).
\item We use the exact finite sample mean and the known polygamma null
variance, thereby including the square endpoint \(m=p\), where the elementary
normalization used away from the boundary is undefined.
\item We derive an explicit finite sample variance series valid for every
positive definite \(R_n\)
from the classical Kibble bivariate gamma transform and prove a relative
approximation bound whose constants do not depend on \(R_n\).
\item We give a self-contained proof of the second chaos projection and all
error estimates uniform over positive definite \(R_n\). Established Wishart, beta product, and
null central limit theorem inputs are identified by citation.
\end{enumerate}

Relative to the cited log determinant theorems, the contribution is one
Gaussian result covering dilute asymptotics, arbitrary positive definite
\(R_n\), and the hard edge with a common exact centering. Throughout the
paper, ``uniform over \(R_n\)'' refers to deterministic error bounds whose
constants do not depend on the correlation matrix \(R_n\). We do not claim a uniform
Berry--Esseen or Kolmogorov distance bound.

\section{Related results and the hard edge}
\label{sec:history}

\subsection{Determinant limit theory}

Under the null, Gram--Schmidt orthogonalization factors the determinant into
independent beta variables \cite{Rouault2007}. The same factorization yields
the exact polygamma normalization \cite{RoweDay2019}. For sample covariance matrices, log
determinants have been treated when the aspect ratio \(p/m\) approaches one and at
the square endpoint \cite{WangHanPan2018}. These
results concern a covariance determinant; they do not contain the random
diagonal normalization of a Pearson correlation matrix.

Two adjacent geometric papers study Gram determinants because the logarithmic
volume of a random simplex is a log determinant. The first gives interior
proportional limits for elliptical observations under spectral and
near isotropy conditions \cite{GusakovaHeinyThale2023}. A recent note weakens
the near isotropy requirement but retains an interior proportional regime and
lower and upper eigenvalue bounds \cite{ShanLi2026}. These results motivate
the removal of restrictions on \(R_n\), but their statistic is not the Pearson sample
correlation determinant.

\subsection{The hard edge and the square endpoint}
\label{sec:hard-edge}

The phrase \emph{hard edge} comes from random matrix theory.  For a normalized
\(p\times p\) Wishart matrix with \(p/m\to\gamma\in(0,1)\), the
Marchenko--Pastur limiting spectrum is supported on
\[
\left[(1-\sqrt{\gamma})^2,(1+\sqrt{\gamma})^2\right].
\]
This is the Marchenko--Pastur law \cite{MarchenkoPastur1967}.
When \(\gamma<1\), the lower endpoint is positive.  As
\(\gamma\uparrow1\), it reaches zero.  Eigenvalues of a positive
semidefinite matrix cannot cross zero, so zero is a fixed, or ``hard,''
boundary. Its microscopic limits are of Bessel type. For the Laguerre and
Jacobi unitary ensembles, \cite{TracyWidom1994Bessel} studied the
Bessel-kernel Fredholm determinant.

Three related regimes should be distinguished:
\[
\begin{array}{ll}
\text{square endpoint:} & m=p,\\
\text{microscopic hard edge:} & m-p=O(1),\\
\text{near hard edge:} & p/m\to1.
\end{array}
\]
The logarithm is singular at zero, so small eigenvalues have an unusually
large effect on \(\log(|\Rhat_n|)\).  The exact null variance \(v_{m,p}\) makes this
transition explicit. Lemma~\ref{lem:app-normalization}, the power series for
\(-\log(1-y)\), and the trigamma expansion in
\cite[Eq.~(5.15.8), p.~144]{OlverEtAl2010} give
\[
\begin{aligned}
v_{m,p}&\sim \frac{p^2}{m^2}
&&\text{if }p/m\to0,\\
v_{m,p}&\longrightarrow-2\{y+\log(1-y)\}
&&\text{if }p/m\to y\in(0,1),\\
v_{m,p}&=2\log(p)+O(1)
&&\text{if }m-p=O(1).
\end{aligned}
\]
The last line includes \(m=p\).
Thus an argument proved only for \(p/m\to\gamma<1\) cannot simply be
evaluated at \(\gamma=1\).
The null hard edge central limit theorem is prior work
\cite{Rouault2007,RoweDay2019,LiLiuXieZhou2026}. The extension here is to
empirically centered Gaussian Pearson matrices under arbitrary positive
definite \(R_n\).

\subsection{Comparison of representative results}

\begin{table}[htbp]
\centering
\scriptsize
\begin{tabularx}{\textwidth}{@{}L{0.18\textwidth}L{0.24\textwidth}
 L{0.25\textwidth}Y@{}}
\toprule
Source & Statistic/regime & Matrix and distributional assumptions
& Relation to this paper\\
\midrule
\cite{Rouault2007}
& Uniform Gram determinant, including the square endpoint \(m=p\)
& Independent vectors uniform on the real unit sphere; equivalent to the
Gaussian null model after centering and column normalization
& The beta product and square endpoint null CLT are established inputs.\\

\cite{RoweDay2019}
& Gaussian total correlation and exact null moments
& Independent standard Gaussian coordinates with known zero mean and an
uncentered sample covariance matrix
& With their dimension and sample size identified as \(p\) and \(m\), and
using \(\log(|\widehat R|)=-2\widehat T\), gives the exact null
digamma/trigamma normalization.\\

\cite{WangHanPan2018}
& Sample covariance log determinant near and at the square endpoint
& \(X_k=\Sigma^{1/2}Y_k\) with deterministic positive definite \(\Sigma\);
the entries of \(Y_k\) are independent, mean zero, variance one, and satisfy
a uniform fourth moment bound
& Gives near square and square sample covariance CLTs without Pearson
normalization.\\

\cite{YinLiTianZheng2022}
& Linear spectral statistics of \(R_n^{-1}\widehat R_n\); the logarithm gives
the same determinant difference when \(p/n\to\rho\in(0,1)\)
& Independent components, analytic test functions, a log-weighted fourth
moment condition, and existence of the five normalized population limits
in Assumption~6
& Contains the same asymptotic Gaussian \(R_n\)-dependent variance term in
its regime.\\

\cite{HeinyParolya2024}
& Uncentered null sample correlation log determinant,
\(p/n\to\gamma\in(0,1)\)
& I.i.d.\ symmetric entries with regularly varying tails of index
\(\alpha\in(3,4)\)
& Establishes a heavy-tailed null universality result in the proportional
interior.\\

\cite{ParolyaEtAl2024}
& Noncentered log determinant for \(p/n\to\gamma\in(0,1]\); sample-mean-centered
version for \(p<n\), with \(1-p/n=O(n^{-1/12})\) when \(\gamma=1\)
& I.i.d.\ mean-zero, variance-one components with finite fourth moment and,
in the published theorem,
\(c\le\lambda_{\min}(R_n)\le\lambda_{\max}(R_n)\le C\)
& Covers non-Gaussian alternatives in proportional regimes under two sided
spectral bounds.\\

\cite{ChenZhengZou2026}
& Linear spectral statistics of \(\widehat R_nM_n\); specializing to
\(M_n=R_n^{-1}\) and the logarithm gives the determinant difference when
\(p/n\to y\in(0,1)\)
& Proportional growth, analytic test functions, a linear independent-component
model, convergence of the empirical spectral distribution of \(R_nM_n\),
and uniform bounds on \(\|R_n\|\), \(\|M_n\|\), and \(\|M_n^{-1}\|\)
& Does not cover \(p/n\to0\), the square endpoint for the logarithm, or
unbounded \(\|R_n\|\) or \(\|R_n^{-1}\|\).\\

\cite{LiLiuXieZhou2026}
& Uncentered null sample correlation log determinant,
\(p/n\to\gamma\in(0,1]\), including \(p=n\)
& I.i.d.\ mean-zero, variance-one entries with regularly varying tails
satisfying
\(\lim_{x\to\infty}x^3\Pr(|\xi|>x)=0\); symmetry is not required
& Extends non-Gaussian null universality to the square endpoint, but does
not cover \(p/n\to0\) or nonidentity population correlation.\\

\cite{ShanLi2026}
& Spherical-direction Gram determinant underlying elliptical simplex
volumes, \(p/n\to\gamma\in(0,1)\)
& Independent uniform directions, uniform spectral bounds on
\(A^{\mathsf T}A\), and both eigenvalue-dispersion conditions in
Assumption~2.2
& Gives an adjacent Gram determinant CLT without Pearson normalization.\\

This paper
& Centered Pearson sample correlations under Gaussian alternatives; every
\(p\to\infty\) with \(m\ge p\)
& Every positive definite population correlation matrix \(R_n\)
& No uniform lower or upper eigenvalue bound on \(R_n\), and no condition on
\(\tr((R_n-I_p)^2)\), sparsity, or individual entries.\\
\bottomrule
\end{tabularx}
\caption{Representative determinant and sample correlation results. The rows
concerning sample covariance and elliptical Gram determinants address adjacent
determinant problems rather than Pearson sample correlation matrices.}
\label{tab:history}
\end{table}

\section{Model, notation, and main results}
\label{sec:main-results}

Let \(x_1,\ldots,x_n\) be independent \(N_p(\mu,\Sigma)\) observations.
Write \(D_\Sigma=\diag(\Sigma_{11},\ldots,\Sigma_{pp})\) and
\(R=D_\Sigma^{-1/2}\Sigma D_\Sigma^{-1/2}\), the population correlation
matrix. The centered scatter matrix and the Pearson sample correlation matrix
are
\[
S=\sum_{k=1}^n(x_k-\bar x)(x_k-\bar x)^\top,
\qquad
\Rhat=\diag(S)^{-1/2}S\diag(S)^{-1/2}.
\]

Put \(m=n-1\),
\(A=R-I_p\), and \(a=\tr(A^2)=\|A\|_{\mathrm F}^2\).
The diagonal of \(A\) is zero, so \(\tr(A)=0\).  Both \(p=p_n\) and
\(R=R_n\) may vary with \(n\).

Let \(\psi(x)=\Gamma'(x)/\Gamma(x)\) and
\(\psi_1(x)=\psi'(x)\) denote the digamma and trigamma functions.  For
integers \(m\ge p\), define
\begin{align}
b_{m,p}
&=\sum_{j=2}^p
\left\{\psi\left(\frac{m-j+1}{2}\right)-\psi\left(\frac m2\right)\right\},
\label{eq:bmp}\\
v_{m,p}
&=\sum_{j=2}^p
\left\{\psi_1\left(\frac{m-j+1}{2}\right)
-\psi_1\left(\frac m2\right)\right\}.
\label{eq:vmp}
\end{align}
Under \(R=I_p\), these are the exact finite sample mean and variance of
\(\log(|\Rhat|)\).  They follow from the classical beta-product or determinant
Mellin transform and appear explicitly, under equivalent sample size
conventions, in \cite{RoweDay2019,XieSun2021}; related earlier moment
formulas and their correction are discussed by
\cite{Guerrero1994,RoweDay2019}.  Because centering the data removes one
observation direction, the degrees of freedom in this paper are \(m=n-1\).

\begin{theorem}[Gaussian log determinant CLT]
\label{thm:exact}
Assume that \(p=p_n\to\infty\), \(m\ge p\), and
\(\{R_n\}_{n=1}^\infty\) is any sequence of positive definite \(p\times p\)
correlation matrices.  Then
\begin{equation}
\frac{\log(|\Rhat_n|)-\{\log(|R_n|)+b_{m,p}\}}
{\left\{v_{m,p}+\dfrac2m\tr\!\left((R_n-I_p)^2\right)\right\}^{1/2}}
\ \Rightarrow\ N(0,1).
\label{eq:main-clt}
\end{equation}

\end{theorem}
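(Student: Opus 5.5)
Write \(W=\Sigma^{-1/2}\)-free form: by scale invariance take \(\Sigma=R\), so \(S\sim W_p(m,R)\) and \(S=R^{1/2}ZR^{1/2}\) with \(Z=G^\top G\), \(G\) an \(m\times p\) standard Gaussian matrix. Then
\[
\log(|\Rhat|)-\log(|R|)=\log(|Z|)-\sum_{i=1}^p\log(S_{ii}),
\]
with \(S_{ii}=\|g_i\|^2\sim\chi^2_m\), where \(g_i\) are the columns of \(GR^{1/2}\), which have correlations \(R_{ij}\). Put \(T_1=\log(|Z|)-\E\log(|Z|)\) (a pure Wishart quantity, independent of \(R\)) and \(T_2=\sum_i h_m(g_i)\) with \(h_m(g)=\log\|g\|^2-\E\log\|g\|^2\). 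The exact mean is \(\E\log|Z|-p\E\log\chi^2_m\), and the beta-product Mellin transform identifies it as \(b_{m,p}\). So the centering is exact, and the whole task is to show that \((T_1-T_2)/\sigma_n\Rightarrow N(0,1)\) with \(\sigma_n^2=v_{m,p}+2a/m\).

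\textbf{Step 1 (chaos reduction of the diagonal).} Use the projection \(J_2h_m(g)=(\|g\|^2-m)/m\) stated in Section~\ref{sec:new-proof}. Write \(h_m=J_2h_m+r_m\), where \(r_m\) lives in chaoses of order \(\ge4\) and \(\E r_m^2\le C/m^2\), since \(\Var\log\chi^2_m-2/m=O(m^{-2})\). For the \(i\)th and \(j\)th coordinates the underlying Gaussian vectors are coordinatewise correlated with coefficient \(R_{ij}\). Mehler's formula therefore gives \(|\E r_m(g_i)r_m(g_j)|\le R_{ij}^4\,C/m^2\). Summing,
\[
\Var\Big(\sum_i r_m(g_i)\Big)\le \frac{C}{m^2}\Big(p+\sum_{i\ne j}R_{ij}^4\Big)\le \frac{C}{m^2}(p+a).
\]
Compare this with \(\sigma_n^2\ge v_{m,p}+2a/m\). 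The lower bound \(v_{m,p}\gtrsim p^2/m^2\) (Lemma~\ref{lem:app-normalization} and the asymptotics in Section~\ref{sec:hard-edge}) together with \(p\to\infty\) gives \(p/m^2=o(p^2/m^2)\) and \(a/m^2=o(a/m)\). So the remainder is \(o_P(\sigma_n)\) uniformly in \(R\). It thus suffices to treat \(L=T_1-\sum_i(S_{ii}-m)/m=T_1-(\tr(S)-mp)/m\). Note that \(\tr S=\tr(RZ)\) and \(\log|Z|\) are both functionals of \(Z\).

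\textbf{Step 2 (exact transform comparison).} Compute the joint characteristic function of \(\log|Z|\) and \(\tr(RZ)\) with the Wishart matrix gamma integral. For parameters \(s=it/\sigma_n\) and \(\theta\) tied to \(s\),
\[
\E\big[|Z|^{s}e^{-\theta\tr(RZ)/m}\big]
=\frac{\Gamma_p(m/2+s)}{\Gamma_p(m/2)}\,2^{ps}\,\Big|I_p+\tfrac{2\theta}{m}R\Big|^{-(m/2+s)}.
\]
Factor \(I+\tfrac{2\theta}{m}R=(1+\tfrac{2\theta}{m})\big(I+\tfrac{\theta'}{q}A\big)\) with \(q=m/2+s\). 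The \(I_p\)-part is exactly the null transform; it converges to the Gaussian by the null CLT of \cite{Rouault2007,RoweDay2019} with variance \(v_{m,p}\) plus the linear-trace contribution. The remaining factor is \(\exp(-q\tr\Log(I_p+\tfrac{\theta'}{q}A))\). Expand it with \(\tr A=0\): the second-order term gives \(\tfrac{\theta'^2}{2q}a\), which after tracking constants produces the \(2a/m\) variance. Higher terms are bounded by \(q\sum_{k\ge3}|\theta'/q|^k\tr|A|^k\le C|\theta'|^3 a^{3/2}/q^2\) using \(\|A\|_{\op}\le\|A\|_{\F}\) (valid once \(|\theta'|\sqrt a/q\) is small). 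Since \(\theta'\sim t\,m/\sigma_n\) up to constants and \(\sigma_n^2\ge 2a/m\), the cubic bound is \(O(|t|^3 m a^{3/2}/(m^{3/2}a^{3/2}))\cdot\)(\(m\)-factors), which tends to \(0\) without any eigenvalue bound.

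\textbf{Main obstacle.} The main difficulty is the regime in which \(|\theta'|\|A\|_{\op}/q\) is not small, namely large \(a\) relative to \(m\). There the principal branch of \(\Log\) must stay well defined, and the series must be handled for complex \(\theta\) with \(I+\theta'A/q\) possibly far from \(I\). I would handle this with an integral remainder for \(\log\det\) along the segment. Positive definiteness of \(R=I+A\) keeps the eigenvalues \(1+\tau\lambda\) of the interpolating matrices in a sector avoiding \(0\), which bounds the remainder by \(|\theta'|^3\tr|A|^3/q^2\) without smallness assumptions. The hard-edge case \(m=p\) enters only through the null factor, whose exact polygamma normalization is already covered by the cited results.
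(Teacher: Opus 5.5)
Your plan follows the paper's proof in outline: the Wishart reduction with exact centering $b_{m,p}$, the projection $J_2h_m=(Q-m)/m$ with a Mehler $\rho^4$ bound on the residual, the reduction to the linear functional $\tr(RZ)$, and the matrix gamma ratio $\exp(-q\tr(\Log(I_p+\theta A/q)))$ compared against the null reference. For Step~1, the paper proves $v_{m,p}\ge p(p-1)/m^2$ directly from $-\psi_2(x)\ge x^{-2}$; your regime asymptotics give the same conclusion through subsequences.

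The real flaw is the scaling in Step~2, and it is what creates your ``main obstacle.'' Because $L$ multiplies $\log(|Z|)$ and $\tr(RZ)/m$ by the same parameter, your factorization forces $\theta'=\theta=s=it/\sigma_n$, not $\theta'\sim tm/\sigma_n$. With your scaling, the best bound from $\sigma_n^2\ge2a/m$ on $|\theta'|^3a^{3/2}/|q|^2$ is of order $|t|^3m^{5/2}$, so the ``($m$-factors)'' estimate fails as written. With the correct scaling, $|q|\ge m/2$, $\|A\|_{\op}\le\sqrt a$ and $\sigma_n^2\ge2a/m$ give
\[
\left\|\frac{\theta}{q}A\right\|_{\op}\le\frac{2|t|\sqrt a}{m\sigma_n}\le\frac{\sqrt2\,|t|}{\sqrt m}
\]
for every correlation matrix $R$. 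The cubic remainder is then at most $C|t|^3a^{3/2}/(m^2\sigma_n^3)\le C|t|^3/\sqrt m$. So there is no regime of large $a$ relative to $m$ to handle: the $2a/m$ term in the normalization is exactly what keeps the logarithm series convergent, and the segment/sector argument is unnecessary. That argument would not rest on positive definiteness anyway. For imaginary $\theta$, $\Re(q+\tau\theta\lambda)=m/2$ for every real $\lambda$, so the lower bound on $|1+\tau\theta\lambda/q|$ comes from $|\theta|/m\to0$. The only genuine branch issue is defining $|C_R(\theta)|^{-q}$ through $\tr(\Log(C_R(\theta)))$, so that the factorization $C_R(\theta)=(1+2\theta/m)B_R(\theta)$ introduces no multiple of $2\pi i$. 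This is what Lemma~\ref{lem:app-wishart-branches} does.

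Two further steps are glossed over. First, replacing $\theta^2a/(2q)$ by $\theta^2a/m$ cannot be justified from $\sigma_n^2\ge2a/m$ alone. That bound on the error $|\theta|^3a/(m|q|)$ is only of order $|t|^3(ma)^{-1/2}$, which need not vanish. Instead, write the difference as $-(\theta^2a/m)(\theta/q)$. Then use $|\theta^2a/m|\le t^2/2$ together with $|\theta|/m\le|t|/(m\sqrt{v_{m,p}})\le|t|/\sqrt{p(p-1)}\to0$. Second, the null factor is $\E(e^{itM_I/\sigma_n})$ with $M_I=T_I+E_I$, where $E_I$ is your Step~1 remainder at $R=I_p$. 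Its variance is $v_{m,p}+O(p/m^2)$, not ``$v_{m,p}$ plus the linear-trace contribution.'' At $R=I_p$ the trace term is the second chaos of the diagonal normalization and is already inside $v_{m,p}$. It is also scaled by $\sigma_n$ rather than $\sqrt{v_{m,p}}$, and $\alpha_n=v_{m,p}/\sigma_n^2$ need not converge. You therefore need the subsequence argument, or uniform convergence of the null characteristic functions on compacts, to get $\E(e^{itM_I/\sigma_n})-e^{-\alpha_nt^2/2}\to0$. Multiplying by $\exp(-(1-\alpha_n)t^2/2+o(1))$ then gives $e^{-t^2/2}$, and Step~1 transfers the limit from $M_R$ to $T_R$.
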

No uniform lower or upper eigenvalue bound on \(R_n\) and no sparsity, density,
effective rank, or entrywise condition is required.

The numerator in \eqref{eq:main-clt} uses the exact finite sample mean.
The denominator is not quite the exact alternative variance, but the next
proposition shows that it is uniformly relatively accurate.  The detailed
calculation is organized as Lemmas~\ref{lem:app-crosscov},
\ref{lem:app-kibble}, and \ref{lem:app-exact-variance} in Appendix~\ref{app:exact-variance}.

\begin{proposition}[Exact finite sample variance]
\label{prop:exact-cumulants}
Let \(\alpha=m/2\), and for \(|r|\le1\) define
\begin{equation}
c_m(r)
=\sum_{k=1}^{\infty}
\frac{(k-1)!}{k(\alpha)_k}r^{2k},
\label{eq:cm-def}
\end{equation}
where \((\alpha)_k=\alpha(\alpha+1)\cdots(\alpha+k-1)\).  If
\(R=(r_{ij})\), then
\begin{equation}
\tau_{m,p}^2(R)
:=\Var_R(\log(|\Rhat|))
=v_{m,p}+\sum_{i\ne j}c_m(r_{ij}).
\label{eq:exact-var}
\end{equation}
Moreover,
\begin{equation}
0\le
\tau_{m,p}^2(R)
-\left\{v_{m,p}+\frac2m\tr\!\left((R-I_p)^2\right)\right\}
\le\frac4{m^2}\tr\!\left((R-I_p)^2\right).
\label{eq:variance-comparison}
\end{equation}
Consequently, the exact scale \(\tau_{m,p}(R)\) may replace the denominator
in Theorem~\ref{thm:exact}.
\end{proposition}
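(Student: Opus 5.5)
The plan is to split the log determinant into a Wishart part and a diagonal part:
\[
\log(|\Rhat|)=\log(|S|)-\sum_{i=1}^p\log(S_{ii}),
\]
and compute the three resulting variance and covariance blocks exactly. Since the statistic is scale invariant, take \(\Sigma=R\), so that \(S\sim W_p(m,R)\).

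\textbf{Step 1: the Wishart part.} By the Bartlett decomposition of \(R^{-1/2}SR^{-1/2}\sim W_p(m,I_p)\), \(\log(|S|)-\log(|R|)\) is a sum of independent \(\log\chi^2_{m-j+1}\) variables. Hence
\[
\Var(\log(|S|))=\sum_{j=1}^p\psi_1\!\left(\tfrac{m-j+1}{2}\right),
\]
with no dependence on \(R\).

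\textbf{Step 2: the cross covariance.} Fix \(i\). The Schur complement \(S_{(-i)\cdot i}=S_{-i,-i}-S_{-i,i}S_{ii}^{-1}S_{i,-i}\) is Wishart with \(m-1\) degrees of freedom and is independent of \((S_{ii},S_{-i,i})\). Since \(\log(|S|)=\log(S_{ii})+\log(|S_{(-i)\cdot i}|)\), this gives
\[
\Cov(\log(|S|),\log(S_{ii}))=\Var(\log(S_{ii}))=\psi_1(m/2).
\]

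\textbf{Step 3: the diagonal pairs.} Each diagonal term satisfies \(\Var(\log(S_{ii}))=\psi_1(m/2)\). For \(i\ne j\), the pair \((S_{ii},S_{jj})\) is Kibble's bivariate gamma with correlation \(r_{ij}^2\), and its joint Mellin transform is
\[
\E(S_{ii}^sS_{jj}^t)=2^{s+t}(1-r^2)^{\alpha+s+t}\frac{\Gamma(\alpha+s)\Gamma(\alpha+t)}{\Gamma(\alpha)^2}\,{}_2F_1(\alpha+s,\alpha+t;\alpha;r^2),
\]
equivalently a negative binomial mixture of independent gammas. The covariance is the mixed derivative \(\partial_s\partial_t\log\) of this transform at \(s=t=0\). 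The \(\Gamma\) and power factors are additively separable, so they contribute nothing. Transforming the hypergeometric factor by Euler's identity to \((1-r^2)^{-\alpha-s-t}{}_2F_1(-s,-t;\alpha;r^2)\), and using \((-s)_k(-t)_k=st\{(k-1)!\}^2+O(|s|^2|t|+|s||t|^2)\) for \(k\ge1\), gives
\[
\Cov(\log(S_{ii}),\log(S_{jj}))=\sum_{k\ge1}\frac{\{(k-1)!\}^2}{(\alpha)_kk!}r^{2k}=c_m(r_{ij}).
\]
Alternatively, the mixture form gives the covariance as \(\Var(\psi(\alpha+K))\) with \(K\) negative binomial, which provides a check.

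\textbf{Step 4: assembling the variance.} Combining the blocks,
\[
\Var_R(\log(|\Rhat|))=\sum_{j=1}^p\psi_1\!\left(\tfrac{m-j+1}{2}\right)-2p\psi_1(m/2)+p\psi_1(m/2)+\sum_{i\ne j}c_m(r_{ij}).
\]
Since the \(j=1\) summand equals \(\psi_1(m/2)\), this is exactly \(v_{m,p}+\sum_{i\ne j}c_m(r_{ij})\), which proves \eqref{eq:exact-var}.

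\textbf{Step 5: the comparison bound.} The \(k=1\) term of \(c_m(r)\) is \(r^2/\alpha=2r^2/m\). Summing over \(i\ne j\) gives \((2/m)\sum_{i\ne j}r_{ij}^2=(2/m)\tr((R-I_p)^2)\). All remaining terms are nonnegative, which gives the lower bound in \eqref{eq:variance-comparison}. For the upper bound, use \(r^{2k}\le r^2\) and show
\[
\sum_{k\ge2}\frac{(k-1)!}{k(\alpha)_k}\le\frac1{\alpha^2}=\frac4{m^2},\qquad \alpha\ge1.
\]
Note that \(\alpha\ge1\) holds because \(m\ge p\ge2\). To prove this inequality, write
\[
\frac{(k-1)!}{(\alpha)_k}=\frac{1}{\alpha(\alpha+1)}\prod_{j=2}^{k-1}\frac{j}{\alpha+j}.
\]
The \(k=2\) term is then \(1/\{2\alpha(\alpha+1)\}\). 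For the tail \(k\ge3\), I would bound the product by geometric-type decay when \(\alpha\) is large, for example \(\prod_{j=2}^{k-1}j/(\alpha+j)\le\{(k-1)/(\alpha+k-1)\}^{k-2}\) or comparison with a beta integral \(\int_0^1 t^{\alpha-1}(1-t)^{k-1}\dd t\). The goal is to show the tail is at most \(1/\{2\alpha(\alpha+1)\}+1/\{\alpha^2(\alpha+1)\}\).

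\textbf{Main obstacle.} I expect this elementary but delicate tail estimate to be the hard step, because the crude bound \(\prod\le 2/k\) loses a constant for large \(\alpha\). What I would try first is the identity
\[
\frac{(k-1)!}{(\alpha)_k}=\frac{1}{\Gamma(\alpha)}\int_0^1t^{\alpha-1}(1-t)^{k-1}\dd t\cdot\frac{\Gamma(\alpha)}{\Gamma(\alpha)}\ \text{(i.e.\ }B(\alpha,k)\text{)}.
\]
Summing \(\sum_{k\ge2}(1-t)^{k-1}/k\) in closed form then gives \(\int_0^1t^{\alpha-1}\{-\log t/(1-t)-1\}\dd t\). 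The resulting integral is \(\psi_1(\alpha)-1/\alpha\), which is at most \(1/(2\alpha^2)+1/(6\alpha^3)\le1/\alpha^2\) by the standard trigamma bounds. This representation should also settle the estimate cleanly.

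\textbf{The final claim.} From \eqref{eq:variance-comparison},
\[
1\le\frac{\tau^2_{m,p}(R)}{v_{m,p}+(2/m)\tr((R-I_p)^2)}\le1+\frac2m\to1.
\]
Slutsky's lemma then lets \(\tau_{m,p}(R)\) replace the denominator in Theorem~\ref{thm:exact}.
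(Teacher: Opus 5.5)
Your proposal is correct. Its architecture matches the paper's Appendix~\ref{app:exact-variance}: the same split $\log(|S|)-\sum_i\log(S_{ii})$, the same Bartlett variance, and the same Schur-complement independence for the cross covariance. The only cosmetic difference there is that you apply the Schur complement to $S\sim W_p(m,R)$ directly, whereas the paper first rotates $W_0$ so that $v_i$ becomes a coordinate direction. The Kibble mixed-derivative computation is also the same: you start from the negative-binomial mixture form and reach the paper's ${}_2F_1(-s,-t;\alpha;r^2)$ form by Euler's transformation, which the paper simply quotes.

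The real difference is how you obtain the constant in the upper bound of \eqref{eq:variance-comparison}. The paper identifies $c_m(1)=\psi_1(\alpha)$ probabilistically. It lets $r\uparrow1$ in $\Cov(\log(\|G_r\|^2),\log(\|G\|^2))$ with an $L^2$ and uniform-integrability argument, and matches the limit with monotone convergence of the series. It then uses $r^{2k}\le r^4$, which gives the pointwise bound $c_m(r)-2r^2/m\le4r^4/m^2$ paralleling the $\rho^4$ bound of Lemma~\ref{lem:second-chaos}, before coarsening via $\sum_{i\ne j}r_{ij}^4\le a$. Your route uses $(k-1)!/(\alpha)_k=B(\alpha,k)$, Tonelli, and $\sum_{k\ge1}u^{k-1}/k=-\log(1-u)/u$ to get $\sum_{k\ge1}(k-1)!/\{k(\alpha)_k\}=\int_0^1t^{\alpha-1}(-\log t)/(1-t)\dd t=\sum_{n\ge0}(\alpha+n)^{-2}=\psi_1(\alpha)$. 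This is a purely analytic identity, shorter than the paper's argument, and it avoids the endpoint limit. Since the final bound is stated in terms of $\tr((R-I_p)^2)$ anyway, your cruder $r^{2k}\le r^2$ loses nothing.

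A few small fixes:
\begin{itemize}
\item Your displayed beta identity carries a spurious factor $1/\Gamma(\alpha)$. It should read $(k-1)!/(\alpha)_k=\Gamma(k)\Gamma(\alpha)/\Gamma(\alpha+k)=B(\alpha,k)$, as your parenthetical says.
\item The exploratory geometric-decay bounds and the restriction $\alpha\ge1$ are unnecessary. The bound $\psi_1(x)\le x^{-1}+x^{-2}$ from the trigamma series already gives $\psi_1(\alpha)-1/\alpha\le1/\alpha^2$ for every $\alpha>0$.
\item You should state that positive definiteness forces $|r_{ij}|<1$ for $i\ne j$. That is what legitimizes the Kibble transform and its termwise differentiation near $(s,t)=(0,0)$.
\end{itemize}
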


\begin{remark}
The Wishart and Kibble bivariate-gamma inputs behind
\eqref{eq:exact-var} are classical
\cite{Muirhead1982,Kibble1941,NadarajahKotz2006}. The classical Wishart
transform also contains the same finite sample information.
Appendix~\ref{app:exact-variance}
records the explicit mixed differentiation, covariance cancellation, and
uniform remainder. We claim the explicit synthesis and bound, not a new Wishart
distributional identity. Earlier exact and structural distribution results
are discussed in Section~\ref{sec:history}; Proposition~\ref{prop:exact-cumulants}
claims the displayed variance series and comparison bound, not priority for
finite sample distribution theory.
\end{remark}

The Gaussian conclusion has one exact matrix spherical extension. This does
not produce independent non-Gaussian observations, but it identifies a larger
class for which the entire finite sample law of the Pearson matrix is
unchanged.

\begin{corollary}[Matrix spherical extension]
\label{cor:matrix-spherical}
Let \(U=U_{p,n}\) be uniform on the Frobenius unit sphere in
\(\mathbb R^{p\times n}\), let \(T=T_{p,n}>0\) be independent of \(U\),
and define
\[
Y=\mu\1_n^\top+T\Sigma^{1/2}U.
\]
Let the columns of \(Y\) be the observations, where \(\Sigma\) is positive
definite. Form the Pearson
matrix by empirical centering, and let \(R\) be the correlation matrix of
\(\Sigma\). Then this Pearson matrix has exactly the same distribution as in
the independent Gaussian model with covariance \(\Sigma\). Consequently,
Theorem~\ref{thm:exact} and Proposition~\ref{prop:exact-cumulants} hold without
change whenever \(p\to\infty\) and \(n-1\ge p\).
\end{corollary}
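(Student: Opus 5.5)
The statement to prove is Corollary~\ref{cor:matrix-spherical}. The approach is a scale-invariance and rotation-invariance argument for the Pearson matrix, reducing the matrix spherical model to the Gaussian model.

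First, observe that \(\Rhat\) is a function of the data matrix \(Y\) that is invariant under three operations. It is invariant under adding \(\mu\1_n^\top\), since centering removes it. It is invariant under multiplying \(Y\) by a positive scalar \(t\), since \(S\mapsto t^2S\) and the diagonal normalization cancels \(t^2\). It is also invariant under left multiplication by a positive diagonal matrix, since \(S\mapsto DSD\) and \(\diag(DSD)^{-1/2}DSD\,\diag(DSD)^{-1/2}=\diag(S)^{-1/2}S\diag(S)^{-1/2}\). Writing \(\Rhat=F(Y)\), we therefore have \(F(\mu\1_n^\top+T\Sigma^{1/2}U)=F(\Sigma^{1/2}U)\) almost surely.

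Second, compare with the Gaussian model. Let \(Z\in\mathbb R^{p\times n}\) have i.i.d.\ \(N(0,1)\) entries. The Gaussian observations are the columns of \(\mu\1_n^\top+\Sigma^{1/2}Z\), so the Gaussian Pearson matrix is \(F(\Sigma^{1/2}Z)\). By rotation invariance of the standard Gaussian in \(\mathbb R^{p\times n}\cong\mathbb R^{pn}\), we can write \(Z=\|Z\|_{\F}\,(Z/\|Z\|_{\F})\). Here \(Z/\|Z\|_{\F}\) is uniform on the Frobenius unit sphere and independent of \(\|Z\|_{\F}>0\), and this holds almost surely. Scale invariance then gives
\[
F(\Sigma^{1/2}Z)=F\bigl(\Sigma^{1/2}Z/\|Z\|_{\F}\bigr)\overset{d}{=}F(\Sigma^{1/2}U).
\]
Combined with the first step, the two Pearson matrices have the same law. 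This holds on the event where \(\diag(S)\) has positive entries, which has probability one in both models because \(U\) has a density on the sphere. The independence of \(T\) from \(U\) is used only to make \(Y\) well defined; \(T\) drops out.

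Third, the population correlation matrix of the Gaussian model with covariance \(\Sigma\) is exactly the \(R\) in the statement. Since the law of \(\log(|\Rhat|)\) is the same for every \(n\), the convergence in \eqref{eq:main-clt} transfers verbatim from Theorem~\ref{thm:exact}. The exact variance identity \eqref{eq:exact-var} and the bound \eqref{eq:variance-comparison} of Proposition~\ref{prop:exact-cumulants} also transfer, as they depend only on that law. The condition \(n-1\ge p\) ensures \(\Rhat\) is almost surely nonsingular, so the log determinant is finite.

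No step is a serious obstacle. The only care needed is in the null-set bookkeeping: one must ensure that \(F\) is a measurable function defined almost surely in both models, and that the polar decomposition of \(Z\) is applied to the whole matrix rather than column by column.
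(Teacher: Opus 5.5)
Your proposal is correct and takes essentially the paper's approach. Both arguments rest on the Frobenius polar decomposition of a standard Gaussian $p\times n$ matrix and on Pearson normalization cancelling a single positive scalar; the paper runs this in the other direction, adjoining an independent $\rho$ with $\rho^2\sim\chi^2_{pn}$ so that $G=\rho U$ is Gaussian and the centered data are $(T/\rho)$ times the centered $\Sigma^{1/2}G$, whereas you split $Z$ into $\|Z\|_{\F}$ and $Z/\|Z\|_{\F}\stackrel{d}{=}U$. One small correction: independence of $T$ from $U$ is not needed even to define $Y$, since your pointwise identity $F(Y)=F(\Sigma^{1/2}U)$ holds for any positive $T$.
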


\begin{proof}
Put \(H=I_n-n^{-1}\1_n\1_n^\top\). On an enlarged probability space, take
\(\rho>0\), independent of \((T,U)\), with \(\rho^2\sim\chi^2_{pn}\). The polar
decomposition of a standard Gaussian vector, after vectorization, gives
\(G:=\rho U\) with independent \(N(0,1)\) entries; \cite{Dawid1977}
develops the surrounding spherical matrix framework. Since
\[
YH=\frac{T}{\rho}\Sigma^{1/2}GH,
\]
its scatter matrix is the Gaussian scatter
matrix multiplied by the single positive scalar \((T/\rho)^2\). Pearson
normalization cancels that scalar exactly.
\end{proof}

For \(m>p\), an elementary normalization used in earlier high dimensional
likelihood ratio theory \cite{JiangQi2015} is
\begin{align}
\mu_{0,m,p}
&=\left(p-m+\frac12\right)\log\left(1-\frac pm\right)
-\frac{m-1}{m}p,
\label{eq:elementary-null-mean}\\
\sigma_{0,m,p}^2
&=-2\left\{\frac pm+\log\left(1-\frac pm\right)\right\}.
\label{eq:elementary-null-var}
\end{align}
The corresponding general-\(R\) center and variance are
\[
\mu_{m,p}(R)=\mu_{0,m,p}+\log(|R|),
\qquad
\sigma_{m,p}^2(R)
=\sigma_{0,m,p}^2+\frac{2}{m}\tr((R-I_p)^2).
\]

\begin{corollary}[Elementary normalization]
\label{cor:elementary}
Assume \(p\to\infty\), \(m>p\), and \(R_n\) is any
positive definite correlation matrix.  Then
\begin{equation}
\frac{\log(|\Rhat_n|)-\mu_{m,p}(R_n)}
{\sigma_{m,p}(R_n)}
\Rightarrow N(0,1).
\label{eq:elementary-corollary}
\end{equation}
\end{corollary}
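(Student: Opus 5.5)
Corollary~\ref{cor:elementary} follows from Theorem~\ref{thm:exact} by Slutsky's lemma. The plan is to show that, uniformly over positive definite \(R_n\),
\[
\frac{b_{m,p}-\mu_{0,m,p}}{\{v_{m,p}+2a/m\}^{1/2}}\to0,
\qquad
\frac{\sigma^2_{m,p}(R_n)}{v_{m,p}+2a/m}\to1 .
\]
Here \(a=\tr((R_n-I_p)^2)\). Granting these two limits, write
\[
\frac{\log|\Rhat_n|-\mu_{m,p}(R_n)}{\sigma_{m,p}(R_n)}
=\Bigl(\frac{\log|\Rhat_n|-\log|R_n|-b_{m,p}}{\tau_n}+\frac{b_{m,p}-\mu_{0,m,p}}{\tau_n}\Bigr)\frac{\tau_n}{\sigma_{m,p}(R_n)},
\]
with \(\tau_n^2=v_{m,p}+2a/m\). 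Theorem~\ref{thm:exact} then gives the conclusion.

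Since the \(R_n\)-dependent term \(2a/m\) is identical in \(\tau_n^2\) and \(\sigma_{m,p}^2(R_n)\), both limits follow from the corresponding null statements with \(a=0\). The reason is that adding the same nonnegative quantity to numerator and denominator only helps. Concretely, I would prove
\[
\frac{b_{m,p}-\mu_{0,m,p}}{v_{m,p}^{1/2}}\to0,
\qquad
\frac{\sigma_{0,m,p}^2}{v_{m,p}}\to1,
\]
for all sequences with \(p\to\infty\) and \(m>p\). Then \(|\sigma^2-\tau^2|/\tau^2\le|\sigma_0^2-v|/v\), and the mean discrepancy divided by \(\tau_n\) is at most that divided by \(v^{1/2}\).

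The null comparisons I would take from the established null theory rather than recompute. The paper refers to Lemma~\ref{lem:app-normalization} for exactly this asymptotic comparison of the exact polygamma normalization with the elementary one of \cite{JiangQi2015}. If a direct argument is needed, use \(\psi(x)=\log x-\frac1{2x}+O(x^{-2})\) and \(\psi_1(x)=\frac1x+\frac1{2x^2}+O(x^{-3})\). Sum over \(j=2,\ldots,p\) with \(x_j=(m-j+1)/2\ge(m-p+1)/2\ge1\), and compare the sums with integrals of \(\log(1-t/m)\) and \(1/(m-t)\).

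I expect the main obstacle to be the hard edge regime where \(m-p\) stays bounded or grows slowly, while still \(m>p\). There the largest terms, near \(j=p\), have \(x_j=O(1)\), so the Stirling expansions are not accurate term by term. I would split the sum at \(j\le p-K\) versus the last \(K\) terms, for large fixed \(K\). On the first part, the expansions give errors \(\sum_j O(x_j^{-2})=O(1/K)\) for the mean and \(O(1/K)\) for the variance. The last \(K\) terms contribute \(O(1)\), with \(K\) depending only on \(K\), to both the exact and the elementary quantities. Meanwhile \(v_{m,p}\ge c\log(m/(m-p+1))\) diverges in this regime, and in the bounded-gap case the \(O(1)\) discrepancies are negligible against \(v^{1/2}\to\infty\).

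In the dilute regime \(p/m\to0\), \(v\sim p^2/m^2\) may tend to zero. There I would check directly that \(b-\mu_0=O(p/m^2+p^3/m^3)\), which is \(o(p/m)\). This uses \(p\to\infty\), and it is the step I would verify most carefully.
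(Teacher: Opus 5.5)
Your proposal is correct and follows the paper's own argument (Lemma~\ref{lem:normalization-equivalence}): the shift by $\log(|R_n|)$ is exact, and adding the common term $2a/m$ to both variances can only shrink the relative discrepancy. The null comparisons are Lemma~\ref{lem:app-normalization}, and Slutsky's theorem with Theorem~\ref{thm:exact} finishes; dividing the mean discrepancy by $v_{m,p}^{1/2}$ rather than $\sigma_{0,m,p}$ is immaterial, and your optional direct sketch is unnecessary because the appendix bounds both $|b_{m,p}-\mu_{0,m,p}|$ and $|v_{m,p}-\sigma_{0,m,p}^2|$ by $C/(m-p)$ uniformly, without splitting off the last $K$ terms.
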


The exact polygamma normalization is needed at \(m=p\), because
\(\log(1-p/m)\) in
\eqref{eq:elementary-null-mean}--\eqref{eq:elementary-null-var}
is not finite. For every \(m>p\), Lemma~\ref{lem:normalization-equivalence} and
Appendix~\ref{app:normalization} prove that the two normalizations are
asymptotically equivalent, including the fixed gaps \(m-p=1,2,3\).

\section{A self-contained reminder on Gaussian Wiener chaos}
\label{sec:chaos-primer}

The words ``Wiener chaos'' can make the proof sound more abstract than it is.
In the finite dimensional Gaussian setting used here, a Wiener chaos
expansion is simply an orthogonal expansion in multivariate Hermite
polynomials.  The construction goes back to \cite{Wiener1938}; the facts
used below are standard and can be found in
\cite[Chapters~2 and~10]{Janson1997}.  We recall them so that the
calculation in Section~\ref{sec:proof} can be checked without prior
Malliavin calculus training.

For \(k\ge0\), the probabilists' Hermite polynomial is
\[
H_k(x)=(-1)^ke^{x^2/2}\frac{\dd^k}{\dd x^k}e^{-x^2/2},
\qquad
H_0(x)=1,\quad H_1(x)=x,\quad H_2(x)=x^2-1.
\]
If \(G=(G_1,\ldots,G_m)^\top\sim N_m(0,I_m)\) and
\(\alpha=(\alpha_1,\ldots,\alpha_m)\in\mathbb N_0^m\), write
\[
|\alpha|=\sum_{\ell=1}^m\alpha_\ell,
\qquad
\alpha!=\prod_{\ell=1}^m\alpha_\ell!,
\qquad
H_\alpha(x)=\prod_{\ell=1}^mH_{\alpha_\ell}(x_\ell).
\]
Independence of the coordinates and the one dimensional Hermite
orthogonality relation give
\begin{equation}
\E(H_\alpha(G)H_\beta(G))
=\1_{(\alpha=\beta)}\alpha!.
\label{eq:hermite-orthogonality}
\end{equation}

Every square integrable function \(f(G)\) has the \(L^2\)-convergent
orthogonal expansion
\begin{equation}
f(G)=\sum_{\alpha\in\mathbb N_0^m}
\widehat f(\alpha)H_\alpha(G),
\qquad
\widehat f(\alpha)
=\frac{\E(f(G)H_\alpha(G))}{\alpha!}.
\label{eq:hermite-expansion}
\end{equation}
The projection
\[
J_kf=\sum_{|\alpha|=k}\widehat f(\alpha)H_\alpha
\]
is called the \(k\)th Wiener chaos component. Thus \(J_0f=\E(f(G))\);
\(J_1f\) is the best linear Gaussian approximation; and \(J_2f\) is the
orthogonal quadratic approximation. When \(\E(f(G))=0\), orthogonality
implies
\[
\Var(f(G))=\sum_{k=1}^\infty\|J_kf\|_2^2.
\]

The second standard input is the one dimensional correlated Hermite
identity of
\cite[Proposition~2.2.1, p.~26]{NourdinPeccati2012}.  It can also be
obtained from the Ornstein--Uhlenbeck spectral definition and Mehler
representation in
\cite[Definition~2.8.1 and Theorem~2.8.2,
pp.~45--47]{NourdinPeccati2012}, together with Hermite orthogonality.
Those cited pages do not state the multivariate formula below verbatim.
For completeness, Lemma~\ref{lem:app-mehler-hermite} in
Appendix~\ref{app:mehler-hermite} derives the identity, including negative
values of \(\rho\) and the endpoints \(\rho=\pm1\).

Suppose that \(G'\) is also \(N_m(0,I_m)\) and
\[
\Cov(G,G')=\rho I_m,
\qquad |\rho|\le1.
\]
Then Lemma~\ref{lem:app-mehler-hermite} gives
\begin{equation}
\E(H_\alpha(G)H_\beta(G'))
=\1_{(\alpha=\beta)}\alpha!\rho^{|\alpha|}.
\label{eq:mehler-hermite}
\end{equation}
Substituting the two expansions \eqref{eq:hermite-expansion} into a
covariance and using \eqref{eq:mehler-hermite} gives
\begin{equation}
\Cov(f(G),f(G'))
=\sum_{k=1}^\infty\rho^k\|J_kf\|_2^2
\label{eq:mehler-covariance}
\end{equation}
for every centered \(f\in L^2(N_m(0,I_m))\).  Notice the key consequence:
if the first nonzero chaos of \(f\) has degree four, then every term in
\eqref{eq:mehler-covariance} contains at least \(\rho^4\).  This fourth
power is what makes the remainder bound uniform over \(R\) possible.

\section{Proof of Theorem~\ref{thm:exact}}
\label{sec:proof}

\subsection{Wishart representation and exact centering}
\label{sec:proof-wishart}

The first lemma is established Wishart theory.  It is stated separately to
identify exactly which part of the proof is quoted from the literature.

\begin{lemma}[Classical Wishart reduction]
\label{lem:wishart-mean}
Let \(m\ge p\). There is a \(p\times m\) matrix \(Z\) with independent
\(N(0,1)\) entries such that
\[
W_0=ZZ^\top,
\qquad
W_R=R^{1/2}W_0R^{1/2}\sim W_p(m,R).
\]
If \(Q_i=(W_R)_{ii}\), then
\begin{equation}
\log(|\Rhat|)
=\log(|R|)+\log(|W_0|)-\sum_{i=1}^p\log(Q_i),
\label{eq:wishart-decomp}
\end{equation}
\(Q_i\sim\chi_m^2\), and
\begin{equation}
\E(\log(|\Rhat|))=\log(|R|)+b_{m,p}.
\label{eq:exact-mean}
\end{equation}
\end{lemma}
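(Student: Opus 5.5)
The plan is to start from the centered data. Let \(X\) be the \(n\times p\) data matrix with rows \(x_k^\top\), and let \(H=I_n-n^{-1}\1_n\1_n^\top\). Choose an \(n\times m\) matrix \(O\) whose columns form an orthonormal basis of \(\1_n^\perp\), so that \(H=OO^\top\).

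\textbf{Representation.} The scatter matrix is
\[
S=X^\top HX=(O^\top X)^\top(O^\top X).
\]
The rows of \(X\) are independent \(N_p(\mu,\Sigma)\), and \(O^\top\1_n=0\). Hence the \(m\) rows of \(O^\top X\) are independent \(N_p(0,\Sigma)\), because orthogonal rotation of i.i.d.\ Gaussian rows preserves that structure. We can therefore write \(O^\top X=Z^\top\Sigma^{1/2}\) with \(Z\) a \(p\times m\) matrix of i.i.d.\ \(N(0,1)\) entries, where \(\Sigma^{1/2}\) is the symmetric square root. This gives \(S=\Sigma^{1/2}ZZ^\top\Sigma^{1/2}\).

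\textbf{Removing the scale.} Pearson normalization is invariant under \(S\mapsto DSD\) for any positive diagonal \(D\). Replacing \(Z\) by a rotated copy if necessary, we may assume \(S=D_\Sigma^{1/2}R^{1/2}ZZ^\top R^{1/2}D_\Sigma^{1/2}\). Justification: the matrices \(D_\Sigma^{-1/2}\Sigma^{1/2}\) and \(R^{1/2}\) have the same Gram matrix \(R\), so they differ by a right orthogonal factor, which can be absorbed into \(Z\). It follows that \(\Rhat\) equals the Pearson matrix of \(W_R=R^{1/2}W_0R^{1/2}\), where \(W_0=ZZ^\top\). Its distribution is \(W_p(m,R)\).

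\textbf{Determinant identity.} From \(\Rhat=\diag(W_R)^{-1/2}W_R\diag(W_R)^{-1/2}\),
\[
\log(|\Rhat|)=\log(|W_R|)-\sum_i\log(Q_i)=\log(|R|)+\log(|W_0|)-\sum_i\log(Q_i).
\]
Since \(m\ge p\), the matrix \(W_0\) is almost surely nonsingular, so all the logarithms are finite. Each \(Q_i=e_i^\top R^{1/2}ZZ^\top R^{1/2}e_i=\|Z^\top R^{1/2}e_i\|^2\). Here \(\|R^{1/2}e_i\|^2=R_{ii}=1\), so \(Z^\top R^{1/2}e_i\sim N_m(0,I_m)\) and \(Q_i\sim\chi^2_m\).

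\textbf{Mean.} Take expectations in \eqref{eq:wishart-decomp}. The \(\chi^2_m\) marginals give \(\E\log(Q_i)=\log 2+\psi(m/2)\). The Bartlett decomposition writes \(|W_0|\) as a product of independent \(\chi^2_{m-j+1}\), \(j=1,\dots,p\), so
\[
\E\log(|W_0|)=\sum_{j=1}^p\{\log 2+\psi((m-j+1)/2)\}.
\]
Subtracting, the \(\log2\) terms cancel and the \(j=1\) term cancels one \(\psi(m/2)\). What remains is exactly \(b_{m,p}\), which proves \eqref{eq:exact-mean}.

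Integrability is not an issue: \(\log\chi^2_k\) has finite mean for every \(k\ge1\), including the hard-edge factor \(\chi^2_{m-p+1}\) when \(m=p\). The expectation therefore distributes over the finite sum. The only delicate point is the orthogonal-absorption step that puts \(R^{1/2}\) in place of \(D_\Sigma^{-1/2}\Sigma^{1/2}\), and it is handled by rotation invariance of \(Z\).
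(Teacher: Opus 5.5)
Your proposal is correct and follows the paper's proof. You use the same steps: the Wishart representation, multiplicativity of the determinant under diagonal rescaling, the identification $Q_i=\|Z^\top R^{1/2}e_i\|^2\sim\chi^2_m$ from $r_{ii}=1$, and Bartlett's decomposition with the digamma means. The only difference is that you build $Z$ explicitly from an orthonormal basis of $\1_n^\perp$ and absorb the orthogonal factor $R^{-1/2}D_\Sigma^{-1/2}\Sigma^{1/2}$ into it, which gives a pathwise identity. The paper instead cites Muirhead's Theorem~3.1.2 and Wishart equivariance for an equality in distribution, and that already suffices for the mean and the CLT.
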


\begin{proof}
The centered Gaussian scatter matrix has the Wishart law \(W_p(m,\Sigma)\)
by \cite[Theorem~3.1.2]{Muirhead1982}. After diagonal rescaling, Wishart
equivariance shows that its correlation matrix has the same distribution as
the correlation matrix formed from \(W_R\sim W_p(m,R)\).

We include the remaining determinant algebra. Since
\[
\Rhat=\diag(W_R)^{-1/2}W_R\diag(W_R)^{-1/2},
\]
multiplicativity of the determinant gives
\[
|\Rhat|
=\frac{|W_R|}{\prod_{i=1}^p(W_R)_{ii}}
=\frac{|R|\,|W_0|}{\prod_{i=1}^pQ_i},
\]
which proves \eqref{eq:wishart-decomp}. The equality
\[
|W_R|=|R^{1/2}W_0R^{1/2}|=|R^{1/2}|^2|W_0|=|R||W_0|
\]
follows directly.

For each \(i\), \(Q_i\) is a sum of squares of \(m\) independent
standard normal variables, because the \(i\)th marginal variance of \(R\)
is one. Hence \(Q_i\sim\chi_m^2\). Bartlett's decomposition gives
\[
|W_0|\stackrel d=\prod_{j=1}^p\chi^2_{m-j+1}
\]
with independent factors
\cite[Theorem~3.2.14]{Muirhead1982}. Direct integration of the
\(\chi_\nu^2\) gamma density gives
\[
\E((\chi_\nu^2)^t)
=2^t\frac{\Gamma(\nu/2+t)}{\Gamma(\nu/2)},
\qquad
\E(\log(\chi_\nu^2))=\log(2)+\psi(\nu/2).
\]
Consequently,
\[
\E(\log(|W_0|))
=p\log(2)+\sum_{j=1}^p\psi\left(\frac{m-j+1}{2}\right),
\qquad
\sum_{i=1}^p\E(\log(Q_i))
=p\left\{\log(2)+\psi\left(\frac m2\right)\right\}.
\]
Subtracting these expressions in \eqref{eq:wishart-decomp}, and observing
that the \(j=1\) digamma term cancels one copy of \(\psi(m/2)\), gives
\eqref{eq:exact-mean} and the definition \eqref{eq:bmp}.
\end{proof}

\subsection{Direct computation of the second chaos projection}
\label{sec:proof-chaos}

The next lemma is the central use of chaos in the present proof.  The general
Hermite expansion and Mehler identity are quoted in
Section~\ref{sec:chaos-primer}; every coefficient needed for the particular
function \(\log(\|G\|^2)\) is computed below.

\begin{lemma}[Exact second chaos projection and residual covariance]
\label{lem:second-chaos}
Let \(G\sim N_m(0,I_m)\), \(Q=\|G\|^2\), and
\[
h_m(G)=\log(Q)-\E(\log(Q)),
\qquad
u_m(G)=\frac{Q-m}{m},
\qquad
e_m(G)=h_m(G)-u_m(G).
\]
Then \(u_m=J_2h_m\).  Moreover,
\begin{align}
&\Var(e_m)
=\psi_1(m/2)-\frac2m\le\frac4{m^2},
\label{eq:e-var}\\
&0\le\Cov(e_m(G),e_m(G'))
\le\rho^4\Var(e_m)
\le\frac{4\rho^4}{m^2}
\label{eq:mehler}
\end{align}
whenever \((G,G')\) is jointly Gaussian with
\(G,G'\sim N_m(0,I_m)\) and
\(\Cov(G,G')=\rho I_m\), where \(|\rho|\le1\).
\end{lemma}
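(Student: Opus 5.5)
The plan is to exploit rotational invariance. Since \(h_m(G)\) depends on \(G\) only through \(Q=\|G\|^2\), it is invariant under orthogonal transformations. Its Hermite expansion therefore has only even chaoses. Each \(J_k h_m\) is rotation invariant too, because \(J_k\) commutes with the orthogonal group action on \(L^2(N_m(0,I_m))\). For odd \(k\), invariance under \(G\mapsto -G\) forces \(J_k h_m=0\).

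For \(k=2\), a rotation-invariant element of the second chaos must be a multiple of \(\sum_\ell H_2(G_\ell)=Q-m\). To see this, note that the second chaos is spanned by \(H_2(G_\ell)\) and \(G_\ell G_{\ell'}\) (\(\ell\ne\ell'\)), that is, by quadratic forms \(G^\top BG-\tr(B)\); invariance forces \(B\propto I_m\). So \(J_2h_m=c(Q-m)\), with
\[
c=\frac{\E(h_m(G)(Q-m))}{\E((Q-m)^2)}=\frac{\E(\log(Q)(Q-m))}{2m}.
\]
We evaluate \(\E(Q\log Q)\) by differentiating the moment formula \(\E(Q^{1+t})=2^{1+t}\Gamma(m/2+1+t)/\Gamma(m/2)\) at \(t=0\). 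This gives
\[
\E(Q\log Q)=m\{\log 2+\psi(m/2+1)\}.
\]
Using \(\E\log Q=\log2+\psi(m/2)\) and \(\psi(x+1)-\psi(x)=1/x\), we get \(\E(\log(Q)(Q-m))=m\cdot 2/m=2\), hence \(c=1/m\) and \(u_m=J_2h_m\).

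Next, \(\Var(h_m)=\Var(\log Q)=\psi_1(m/2)\) from the same Mellin transform, and \(\|u_m\|_2^2=2m/m^2=2/m\). By orthogonality, \(e_m=\sum_{k\ge4}J_kh_m\) and
\[
\Var(e_m)=\psi_1(m/2)-\frac2m.
\]
For the bound \(\psi_1(x)-1/x\le 1/x^2\) with \(x=m/2\), one can use the standard inequality \(\psi_1(x)\le 1/x+1/(2x^2)+1/(6x^3)\). A more self-contained route writes \(\psi_1(x)=\sum_{j\ge0}(x+j)^{-2}\) and compares the sum with \(\int_x^\infty t^{-2}\dd t=1/x\): since \(t^{-2}\) is decreasing, \(\sum_{j\ge0}(x+j)^{-2}\le x^{-2}+\int_x^\infty t^{-2}\dd t\), which gives \(\psi_1(x)-1/x\le x^{-2}=4/m^2\). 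This proves \eqref{eq:e-var}.

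Finally, apply \eqref{eq:mehler-covariance} to \(f=e_m\). Its chaos components are \(J_ke_m=J_kh_m\) for even \(k\ge4\) and zero otherwise, so
\[
\Cov(e_m(G),e_m(G'))=\sum_{k\ge4,\ k\text{ even}}\rho^k\|J_kh_m\|_2^2.
\]
Every term is nonnegative because \(k\) is even, which handles negative \(\rho\). Since \(\rho^k\le\rho^4\) for \(|\rho|\le1\), the sum is at most \(\rho^4\sum_k\|J_kh_m\|_2^2=\rho^4\Var(e_m)\le 4\rho^4/m^2\).

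The main obstacle is justifying that \(J_2h_m\) is proportional to \(Q-m\), in particular that the off-diagonal coefficients \(\E(h_mG_\ell G_{\ell'})\) vanish and the diagonal ones \(\E(h_mH_2(G_\ell))\) are equal. The direct check is sign-flip symmetry \(G_\ell\mapsto -G_\ell\) for the former and coordinate exchangeability for the latter. This avoids invoking representation theory and is the route I would take. Odd chaoses vanish by the same sign-flip argument.
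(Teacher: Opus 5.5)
Your proposal is correct and follows essentially the same route as the paper. Sign-flip symmetry kills the mixed second-chaos coefficients and the odd chaoses, and exchangeability equalizes the diagonal coefficients. The $\chi^2_m$ Mellin transform with $\psi(x+1)-\psi(x)=1/x$ then gives the coefficient $1/m$, and the trigamma series together with Mehler's identity (only even chaoses $k\ge4$ survive) gives \eqref{eq:e-var} and \eqref{eq:mehler}. The only cosmetic difference is how you obtain that coefficient. You first prove proportionality and then compute it as $\E(\log(Q)(Q-m))/\Var(Q)$, whereas the paper computes $\E(G_\ell^2\log(Q))$ coordinate by coordinate; the two calculations are equivalent.
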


\begin{proof}
Put \(\alpha=m/2\).  Since \(Q\sim\chi_m^2\), its Mellin transform is
\begin{equation}
M(t)=\E(Q^t)
=2^t\frac{\Gamma(\alpha+t)}{\Gamma(\alpha)},
\qquad t>-\alpha.
\label{eq:chi-mellin}
\end{equation}
Differentiating gives
\begin{equation}
M'(t)=M(t)\{\log(2)+\psi(\alpha+t)\}.
\label{eq:chi-mellin-derivative}
\end{equation}
At \(t=0\), \(M(0)=1\), so
\[
\E(\log(Q))=M'(0)=\log(2)+\psi(\alpha).
\]
At \(t=1\), \(M(1)=\E(Q)=m\), and hence
\[
\E(Q\log(Q))=M'(1)=m\{\log(2)+\psi(\alpha+1)\}.
\]
The digamma recurrence
\(\psi(x+1)-\psi(x)=1/x\) now gives
\[
\Cov(\log(Q),Q)
=\E(Q\log(Q))-\E(Q)\E(\log(Q))
=m\{\psi(\alpha+1)-\psi(\alpha)\}
=\frac m\alpha=2.
\]
Likewise,
\[
\E(Q^2)=M(2)=4\frac{\Gamma(\alpha+2)}{\Gamma(\alpha)}
=4\alpha(\alpha+1)=m(m+2),
\qquad
\Var(Q)=2m.
\]

We next identify \(J_2h_m\) coefficient by coefficient, without invoking
rotational invariance. The second chaos is spanned by
\[
H_2(G_\ell)=G_\ell^2-1,\quad 1\le\ell\le m,
\qquad
H_1(G_\ell)H_1(G_k)=G_\ell G_k,\quad \ell<k.
\]
For a fixed \(\ell\), exchangeability of the coordinates gives
\[
\E(G_\ell^2Q^t)
=\frac1m\E\left(\left(\sum_{r=1}^mG_r^2\right)Q^t\right)
=\frac1mM(t+1).
\]
Differentiating at \(t=0\) and using
\eqref{eq:chi-mellin-derivative} gives
\[
\E(G_\ell^2\log(Q))
=\frac1mM'(1)=\log(2)+\psi(\alpha+1).
\]
Since \(\E(G_\ell^2)=1\),
\[
\E(h_m(G)H_2(G_\ell))
=\Cov(\log(Q),G_\ell^2)
=\psi(\alpha+1)-\psi(\alpha)
=\frac1\alpha=\frac2m.
\]
Also \(\E(H_2(G_\ell)^2)=2\). By the projection formula in
\eqref{eq:hermite-expansion}, the coefficient of \(H_2(G_\ell)\) is
therefore \(1/m\).

For \(\ell\ne k\),
\[
\E(h_m(G)G_\ell G_k)=0.
\]
Indeed, replace
\(G_\ell\) by \(-G_\ell\) in the defining Gaussian
integral.  The quantity \(Q\), the function \(h_m(G)\), and the Gaussian
density remain unchanged, while \(G_\ell G_k\) changes sign. Thus every
mixed second-chaos coefficient is zero. Combining these coefficients gives
\[
J_2h_m(G)
=\frac1m\sum_{\ell=1}^mH_2(G_\ell)
=\frac1m\sum_{\ell=1}^m(G_\ell^2-1)
=\frac{Q-m}{m}=u_m(G).
\]

The function \(h_m\) is centered, so \(J_0h_m=0\).  More strongly, it is
even in each coordinate separately.  Therefore a Hermite coefficient
vanishes whenever any component of its multi-index is odd.  In particular,
the first and third chaoses vanish.  After \(J_2h_m\) is subtracted, the
expansion of \(e_m\) begins in total degree four.

Orthogonality of different chaoses yields
\[
\Var(e_m)=\Var(h_m)-\Var(u_m).
\]
Differentiating the cumulant-generating function \(\log M(t)\) twice
at zero gives the second cumulant,
\(\Var(\log Q)=\psi_1(\alpha)\), while
\(\Var(Q)=2m\) gives \(\Var(u_m)=2/m\). Hence the equality in
\eqref{eq:e-var} follows.
The trigamma series
\[
\psi_1(x)=\sum_{r=0}^\infty(x+r)^{-2}
\]
is given by
\cite[Eq.~(5.15.1), p.~144]{OlverEtAl2010}.  Because
\(t\mapsto(x+t)^{-2}\) is decreasing,
\[
\sum_{r=1}^\infty(x+r)^{-2}
\le\int_0^\infty(x+t)^{-2}\dd t
=\frac1x.
\]
Thus \(\psi_1(x)\le x^{-2}+x^{-1}\); setting \(x=m/2\) proves
\(\Var(e_m)\le4/m^2\).

Finally, write
\[
e_m=\sum_{q=2}^\infty J_{2q}e_m.
\]
Mehler's identity \eqref{eq:mehler-covariance} gives
\[
\Cov(e_m(G),e_m(G'))
=\sum_{q=2}^\infty\rho^{2q}\|J_{2q}e_m\|_2^2.
\]
Every coefficient is nonnegative, and
\(|\rho|^{2q}\le|\rho|^4\) for \(q\ge2\).  Therefore
\[
0\le\Cov(e_m(G),e_m(G'))
\le\rho^4\sum_{q=2}^\infty\|J_{2q}e_m\|_2^2
=\rho^4\Var(e_m),
\]
which proves \eqref{eq:mehler}.
\end{proof}

\subsection{Construction of the coordinate vectors and the uniform remainder}
\label{sec:proof-remainder}

Let \(z_1,\ldots,z_m\) be the columns of \(Z\).  Thus the \(z_k\) are
independent \(N_p(0,I_p)\) vectors.  For \(1\le i\le p\), let \(e_i\)
denote the \(i\)th coordinate vector and define
\begin{equation}
v_i=R^{1/2}e_i,\qquad
g_i=Z^\top v_i
=\bigl(v_i^\top z_1,\ldots,v_i^\top z_m\bigr)^\top.
\label{eq:gi-definition}
\end{equation}
Because \(R\) is a correlation matrix,
\[
\|v_i\|^2=e_i^\top Re_i=r_{ii}=1,
\]
so \(g_i\sim N_m(0,I_m)\).
For observation indices \(k,\ell\),
\[
\Cov(g_{i,k},g_{j,\ell})
=\1_{(k=\ell)}v_i^\top v_j
=\1_{(k=\ell)}e_i^\top Re_j
=\1_{(k=\ell)}r_{ij}.
\]
Consequently,
\begin{equation}
\Cov(g_i,g_j)=r_{ij}I_m.
\label{eq:gi-covariance}
\end{equation}
Direct matrix multiplication also gives
\begin{align}
Q_i=(W_R)_{ii}
=e_i^\top R^{1/2}ZZ^\top R^{1/2}e_i =v_i^\top ZZ^\top v_i
=\|Z^\top v_i\|^2=\|g_i\|^2.
\label{eq:Qi-gi}
\end{align}

\begin{lemma}[Nonlinear-remainder bound uniform over correlation matrices]
\label{lem:remainder}
Set
\[
E_R=\sum_{i=1}^pe_m(g_i),
\qquad
s_R^2=v_{m,p}+\frac{2a}{m}.
\]
Then
\begin{equation}
\E\left(\left(\frac{E_R}{s_R}\right)^2\right)
\le\frac4{p-1}+\frac2m\longrightarrow0.
\label{eq:uniform-remainder}
\end{equation}
In particular, \(E_R/s_R\to0\) in \(L^2\), uniformly over all positive definite correlation matrices \(R\).
\end{lemma}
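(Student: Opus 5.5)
We expand the second moment of \(E_R\) as a double sum of covariances. Lemma~\ref{lem:second-chaos} bounds each covariance by the fourth power of the relevant correlation. We then compare the diagonal part with \(v_{m,p}\) and the off-diagonal part with \(2a/m\), separately.

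First, by \eqref{eq:gi-covariance}, every pair \((g_i,g_j)\) is jointly Gaussian with standard \(N_m(0,I_m)\) marginals and \(\Cov(g_i,g_j)=r_{ij}I_m\), where \(|r_{ij}|\le1\). Each \(e_m(g_i)\) is centered. Hence
\[
\E(E_R^2)=\sum_{i=1}^p\Var(e_m(g_i))+\sum_{i\ne j}\Cov(e_m(g_i),e_m(g_j)).
\]
By \eqref{eq:e-var} the diagonal sum is at most \(4p/m^2\). By \eqref{eq:mehler} with \(\rho=r_{ij}\), each off-diagonal term is at most \(4r_{ij}^4/m^2\le 4r_{ij}^2/m^2\). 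Since the diagonal of \(A\) vanishes, \(\sum_{i\ne j}r_{ij}^2=a\). Therefore
\[
\E(E_R^2)\le\frac{4p}{m^2}+\frac{4a}{m^2}.
\]
This is the only place where the structure of \(R\) enters, and it enters only through \(a\). That is the source of uniformity over \(R\).

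Second, we split the ratio using \(s_R^2\ge v_{m,p}\) for the first piece and \(s_R^2\ge 2a/m\) for the second. When \(a=0\), the second piece is simply absent. This gives
\[
\frac{\E(E_R^2)}{s_R^2}\le \frac{4p}{m^2v_{m,p}}+\frac{4a/m^2}{2a/m}
=\frac{4p}{m^2v_{m,p}}+\frac2m .
\]

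The remaining step, and the only real obstacle, is an explicit lower bound \(v_{m,p}\ge p(p-1)/m^2\), valid for all \(m\ge p\) including the square endpoint. We will use the series \(\psi_1(x)=\sum_{r\ge0}(x+r)^{-2}\) already quoted. It gives \(-\psi_1'(y)=2\sum_{r\ge0}(y+r)^{-3}\ge2\int_0^\infty(y+t)^{-3}\dd t=y^{-2}\). Since \(\psi_1'\) is increasing, the mean value theorem yields, for \(x<y\),
\[
\psi_1(x)-\psi_1(y)\ge (y-x)\,y^{-2}.
\]
With \(y=m/2\) and \(x=(m-j+1)/2\), each summand of \eqref{eq:vmp} is at least \(2(j-1)/m^2\). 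Summing over \(j=2,\ldots,p\) gives \(v_{m,p}\ge p(p-1)/m^2\). Hence the first piece is at most \(4/(p-1)\), which proves \eqref{eq:uniform-remainder}.

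Since the bound involves only \(p\) and \(m\), and \(p\to\infty\) with \(m\ge p\), it tends to zero uniformly in \(R\).
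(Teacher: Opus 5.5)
Your proposal is correct and follows essentially the same route as the paper: you expand the covariance sum, bound it by \(4(p+a)/m^2\) via \eqref{eq:e-var} and \eqref{eq:mehler}, split the ratio using \(s_R^2\ge v_{m,p}\) and \(s_R^2\ge 2a/m\), and establish \(v_{m,p}\ge p(p-1)/m^2\) from \(-\psi_2(x)\ge x^{-2}\). The differences are cosmetic. You treat the diagonal terms separately rather than summing \(r_{ij}^2\) into \(\tr(R^2)=p+a\), and you use the mean value theorem where the paper integrates \(-\psi_2\) directly.
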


\begin{proof}
Each \(e_m(g_i)\) is centered, so
\[
\Var(E_R)=\sum_{i=1}^p\sum_{j=1}^p
\Cov(e_m(g_i),e_m(g_j)).
\]
Lemma~\ref{lem:second-chaos} with
\(\rho=r_{ij}\) gives
\[
0\le\Cov(e_m(g_i),e_m(g_j))\le\frac{4r_{ij}^4}{m^2}.
\]
Since every correlation satisfies \(|r_{ij}|\le1\),
\[
\Var(E_R)\le\frac4{m^2}\sum_{i,j}r_{ij}^4
\le\frac4{m^2}\sum_{i,j}r_{ij}^2.
\]
Because \(R\) is symmetric,
\[
\sum_{i,j}r_{ij}^2=\tr(R^2).
\]
Writing \(R=I_p+A\), using \(\tr(A)=0\), and recalling
\(\tr(A^2)=a\), we obtain
\[
\tr(R^2)=\tr((I_p+A)^2)=p+2\tr(A)+\tr(A^2)=p+a.
\]
Thus
\begin{equation}
\Var(E_R)\le\frac4{m^2}(p+a).
\label{eq:remainder-var}
\end{equation}

We now establish the lower bound for the null variance that is used
throughout the paper:
\begin{equation}
v_{m,p}\ge\frac{p(p-1)}{m^2}.
\label{eq:v-lower}
\end{equation}
The series in
\cite[Eq.~(5.15.1), p.~144]{OlverEtAl2010} and its derivative converge
locally uniformly for \(x>0\).  Termwise differentiation therefore gives
\[
-\psi_2(x)=2\sum_{k=0}^\infty(x+k)^{-3}.
\]
Because the summand is decreasing,
\[
2\sum_{k=0}^\infty(x+k)^{-3}
\ge2\int_0^\infty(x+t)^{-3}\dd t=x^{-2}.
\]
For \(1\le r\le p-1\), the fundamental theorem of calculus therefore gives
\begin{align*}
\psi_1\left(\frac{m-r}{2}\right)-\psi_1\left(\frac m2\right)
&=\int_{(m-r)/2}^{m/2}\{-\psi_2(x)\}\dd x \ge\int_{(m-r)/2}^{m/2}x^{-2}\dd x\\
&=\frac2{m-r}-\frac2m
=\frac{2r}{m(m-r)}
\ge\frac{2r}{m^2}.
\end{align*}
Summing over \(r=1,\ldots,p-1\) proves \eqref{eq:v-lower}.

Use \eqref{eq:remainder-var} and split its numerator into \(p\) and \(a\).
If \(a>0\),
\begin{align*}
\frac{\Var(E_R)}{s_R^2}
\le
\frac{4p/m^2}{v_{m,p}}
+\frac{4a/m^2}{2a/m} \le\frac4{p-1}+\frac2m.
\end{align*}
When \(a=0\), the second fraction is absent and the same bound holds.
Since \(p\to\infty\) and \(m\ge p\), the right hand side tends to zero.
The bound contains no eigenvalue or other \(R\)-dependent quantity.
\end{proof}

\subsection{Reduction to a linear correlation-matrix term}
\label{sec:proof-reduction}

We now derive the leading statistic without suppressing any algebra.  From
\eqref{eq:wishart-decomp} and Lemma~\ref{lem:wishart-mean},
\begin{align*}
\log(|\Rhat|)-\E(\log(|\Rhat|))
&=\log(|W_0|)-\E(\log(|W_0|))
  -\sum_{i=1}^p\{\log(Q_i)-\E(\log(Q_i))\}.
\end{align*}
By Lemma~\ref{lem:second-chaos} and \eqref{eq:Qi-gi},
\[
\log(Q_i)-\E(\log(Q_i))=\frac{Q_i-m}{m}+e_m(g_i).
\]
Substitution gives
\begin{align}
\log(|\Rhat|)-\E(\log(|\Rhat|))
&=\log(|W_0|)-\E(\log(|W_0|))
  -\frac{\sum_{i=1}^pQ_i-mp}{m}
  -\sum_{i=1}^pe_m(g_i).
\label{eq:reduction-expanded}
\end{align}
The sum of the diagonal elements is the trace, so
\[
\sum_{i=1}^pQ_i
=\tr(W_R)=\tr(R^{1/2}W_0R^{1/2})=\tr(RW_0).
\]
The last equality is the cyclic identity
\(\tr(BC)=\tr(CB)\).  Define
\begin{equation}
M_R=\log(|W_0|)-\E(\log(|W_0|))
-\frac{\tr(RW_0)-mp}{m}.
\label{eq:MR}
\end{equation}
Equation \eqref{eq:reduction-expanded} is now the exact decomposition
\begin{equation}
\log(|\Rhat|)-\E(\log(|\Rhat|))=M_R-E_R.
\label{eq:M-minus-E}
\end{equation}
The nonlinear dependence on \(R\) has been placed in the
\(L^2\)-negligible term \(E_R\).  Dependence on \(R\) in the leading term
\(M_R\) occurs only through the linear functional \(\tr(RW_0)\).

\subsection{Exact characteristic-transform comparison}
\label{sec:proof-transform}

\begin{lemma}[Wishart transform and comparison uniform over \(R\)]
\label{lem:transform}
For every fixed \(t\in\mathbb R\),
\begin{equation}
\frac{\E(e^{itM_R/s_R})}{\E(e^{itM_I/s_R})}
=\exp\left(-\frac{t^2}{2}
\frac{2a/m}{s_R^2}+o(1)\right),
\label{eq:transform-comparison}
\end{equation}
where the \(o(1)\) is uniform over all positive definite correlation
matrices \(R\).
\end{lemma}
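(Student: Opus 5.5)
The plan is to compute both characteristic functions exactly from the Wishart Laplace--Mellin transform and then expand the logarithm of their ratio. Everything reduces to bounding one matrix power series in terms of \(a\) and \(\|A\|_{\F}\), with no spectral input.

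\emph{Step 1: exact transform.} Put \(u=t/s_R\). By \eqref{eq:MR},
\[
\E(e^{iuM_R})=e^{-iu\E\log|W_0|+iup}\,\E\bigl(|W_0|^{iu}e^{-\tr(TW_0)}\bigr),
\qquad T=\frac{iu}{m}R .
\]
For \(W_0\sim W_p(m,I_p)\), the classical matrix gamma integral \cite{Muirhead1982} gives
\[
\E\bigl(|W_0|^{s}e^{-\tr(TW_0)}\bigr)=2^{ps}\frac{\Gamma_p(m/2+s)}{\Gamma_p(m/2)}\,|I_p+2T|^{-(m/2+s)} .
\]
I will apply this with \(s=iu\) and \(\Re(I_p+2T)=I_p>0\).
\begin{itemize}
\item This is first proved for real \(s\) and real positive definite \(I_p+2T\), then extended by analytic continuation in \(s\) and in the entries of \(T\).
\item The complex power is fixed by the principal branch along the path \(\lambda\mapsto I_p+2\lambda T\), \(\lambda\in[0,1]\). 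The eigenvalues of \(I_p+2\lambda iuR/m\) are \(1+2\lambda iu\mu_j/m\) with \(\mu_j>0\), so they never leave the right half plane.
\end{itemize}
The centering factors and \(2^{ps}\Gamma_p\) factors are the same for \(R\) and \(I_p\). The denominator \(\E(e^{iuM_I})\) is nonzero, being an explicit product of gamma values at points of positive real part times a nonvanishing power.

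\emph{Step 2: the ratio.} Set \(q=m/2+iu\) and \(\varepsilon=2iu/(m+2iu)\). Since \(I_p+2iuR/m=(1+2iu/m)(I_p+\varepsilon A)\), the ratio is
\[
\exp\bigl(-q\,\tr\Log(I_p+\varepsilon A)\bigr).
\]
This is exactly the displayed form from Section~\ref{sec:new-proof}, and the branches agree by continuity in \(\lambda\).

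\emph{Step 3: expansion and the key smallness estimate.} The crucial inequality uses \(\|A\|_{\op}\le\|A\|_{\F}=\sqrt a\) and \(s_R^2\ge 2a/m\):
\[
|\varepsilon|\sqrt a\le\frac{2|u|\sqrt a}{m}=\frac{2|t|\sqrt a}{s_Rm}\le\frac{2|t|}{\sqrt{2m}}\longrightarrow0 .
\]
So for large \(n\) we have \(\|\varepsilon A\|_{\op}\le1/2\), and the power series for \(\Log\) converges.

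Expanding and using \(\tr(A)=0\),
\[
-q\,\tr\Log(I_p+\varepsilon A)=\frac{q\varepsilon^2a}{2}+\rho_R .
\]
The main term is \(q\varepsilon^2a/2=-u^2a/(m+2iu)\). It differs from \(-u^2a/m=-\frac{t^2}{2}\frac{2a/m}{s_R^2}\) by at most
\[
\frac{2|u|^3a}{m^2}=\frac{u^2a}{m}\cdot\frac{2|u|}{m}.
\]
Two bounds control this:
\begin{itemize}
\item \(u^2a/m\le t^2/2\), from \(s_R^2\ge 2a/m\);
\item \(|u|/m\le |t|/\sqrt{p(p-1)}\), from \(s_R^2\ge v_{m,p}\ge p(p-1)/m^2\) by \eqref{eq:v-lower}.
\end{itemize}
Hence this error tends to zero.

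For the higher terms, \(|\tr(A^k)|\le\|A\|_{\op}^{k-2}a\) for \(k\ge3\). This gives
\[
|\rho_R|\le |q||\varepsilon|^2a\sum_{k\ge3}(|\varepsilon|\sqrt a)^{k-2}
\le C\,\frac{u^2a}{m}\cdot\frac{|\varepsilon|\sqrt a}{1-|\varepsilon|\sqrt a}=o(1),
\]
using \(|q||\varepsilon|^2\le 2u^2/m\). All constants depend only on \(t\), \(p\) and \(m\), so the \(o(1)\) is uniform over \(R\).

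\emph{Main obstacle.} The delicate point is whether the power series may be used at all without a bound on \(\lambda_{\max}(R)\). The crude bound \(\|A\|_{\op}\le p-1\) only gives \(|\varepsilon|\|A\|_{\op}=O(|t|)\), which is not small. What rescues the argument is replacing the operator norm by the Frobenius norm and absorbing \(\sqrt a\) through the \(2a/m\) part of \(s_R^2\), as in Step 3. A secondary technical point is the branch and analytic-continuation bookkeeping for the complex Wishart integral.
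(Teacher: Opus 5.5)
Your proposal is correct and follows essentially the same route as the paper. Both arguments use the Wishart matrix gamma transform on the imaginary axis, with the branch fixed through the trace of the principal matrix logarithm. Both factor \(I_p+2iuR/m=(1+2iu/m)(I_p+\varepsilon A)\), use \(\|A\|_{\op}\le\sqrt a\) absorbed by the \(2a/m\) part of \(s_R^2\), and control the \(q\) versus \(m/2\) discrepancy through \(v_{m,p}\ge p(p-1)/m^2\). The remaining differences are minor. You bound \(2|u|^3a/m^2=(u^2a/m)(2|u|/m)\) factor by factor instead of maximizing \(x(\beta+x)^{-3/2}\). You also obtain branch compatibility by continuity in \(\lambda\) rather than by the paper's principal-argument bookkeeping.
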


\begin{proof}
We first derive the transform.  The density of
\(W_0\sim W_p(m,I_p)\) is
\begin{equation}
f_m(W)
=\frac{|W|^{(m-p-1)/2}\exp(-\tr(W)/2)}
{2^{mp/2}\Gamma_p(m/2)},\qquad W\succ0,
\label{eq:wishart-density}
\end{equation}
Here
\[
\Gamma_p(q)=\pi^{p(p-1)/4}\prod_{j=1}^p
\Gamma\left(q-\frac{j-1}{2}\right).
\]
The density is Theorem~3.2.1 of \cite{Muirhead1982}, and the
matrix gamma integral used below is that book's Theorem~2.1.11.

For a complex number \(\theta\), the definition \eqref{eq:MR} gives
\begin{equation}
e^{\theta M_R}
=\exp(-\theta\E(\log(|W_0|))+p\theta)
 |W_0|^\theta
 \exp\left(-\frac{\theta}{m}\tr(RW_0)\right).
\label{eq:exp-MR}
\end{equation}
Put \(q=m/2+\theta\).  Multiplying \eqref{eq:exp-MR} by
\eqref{eq:wishart-density}, the determinant power becomes
\[
\theta+\frac{m-p-1}{2}=q-\frac{p+1}{2},
\]
and the exponential factor involving \(W\) becomes
\[
\exp\left(-\frac12\tr\left(
\left(I_p+\frac{2\theta}{m}R\right)W\right)\right).
\]
The matrix gamma identity
\[
\int_{W\succ0}|W|^{q-(p+1)/2}
e^{-\tr(CW)/2}\dd W
=2^{pq}\Gamma_p(q)|C|^{-q}
\]
is stated in \cite[Theorem~2.1.11]{Muirhead1982} for complex \(q\) satisfying
\(\Re(q)>(p-1)/2\) and complex symmetric \(C\) whose real part is positive
definite. Lemma~\ref{lem:app-wishart-branches} gives a self-contained
one-variable verification along the curve
\(q(\theta)=m/2+\theta\) and
\(C_R(\theta)=I_p+2\theta R/m\), which is all that is needed here.  It
proves holomorphy in \(\theta\), defines every logarithm branch, and shows
that the continuation includes the imaginary axis.  Thus
\begin{align}
\E(e^{\theta M_R})
={}&e^{-\theta\E(\log(|W_0|))+p\theta}\cdot
2^{p\theta}\cdot\frac{\Gamma_p(m/2+\theta)}
{\Gamma_p(m/2)}
\left|I_p+\frac{2\theta}{m}R\right|^{-q}.
\label{eq:wishart-transform}
\end{align}
where the determinant power is the branch defined in
\eqref{eq:app-determinant-power}.

We shall take \(\theta\) to be imaginary.  Then the right-hand side of
\eqref{eq:wishart-transform} is nonzero: all gamma arguments have positive
real parts, the gamma function has no zeros, and every remaining factor is
an exponential.  Division by the
\(R=I_p\) transform is therefore legitimate.

Write \(R=I_p+A\).  Since
\[
I_p+\frac{2\theta}{m}R
=\left(1+\frac{2\theta}{m}\right)
\left(I_p+\frac{\theta}{q}A\right),
\qquad q=\frac m2+\theta,
\]
the branch compatible cancellation proved in
Lemma~\ref{lem:app-wishart-branches} gives
\begin{equation}
\frac{\E(e^{\theta M_R})}{\E(e^{\theta M_I})}
=\exp\left(-q\,\tr\left(\Log\left(
I_p+\frac{\theta}{q}A\right)\right)\right),
\label{eq:ratio}
\end{equation}
where \(\Log\) is the principal matrix logarithm.  The same appendix lemma
shows that it agrees with the power series logarithm used below once
\(\|(\theta/q)A\|_{\op}<1\).

Take \(\theta=it/s_R\), where \(t\) is fixed, and set
\(X=\theta A/q\) and \(r_n=\|X\|_{\op}\).
Since \(\|A\|_{\op}\le\|A\|_{\F}=\sqrt a\),
\(|q|\ge m/2\), and \(s_R^2\ge2a/m\),
\[
r_n\le\frac{2|t|\sqrt a}{ms_R}
\le\frac{\sqrt2\,|t|}{\sqrt m}\longrightarrow0.
\]

Hence \(r_n\le1/2\) for all sufficiently large \(n\), uniformly in \(R\),
and the matrix logarithm power series converges absolutely. Because \(\tr(A)=0\),
\begin{align}
-q\,\tr(\Log(I_p+X))
=-q\sum_{k=1}^\infty
\frac{(-1)^{k+1}}{k}\tr(X^k) =\frac{\theta^2}{2q}\tr(A^2)+R_{3,n}
=\frac{\theta^2a}{2q}+R_{3,n}.
\label{eq:matrix-log-expansion}
\end{align}
For \(k\ge2\), the eigenvalues of \(A\) give
\begin{equation}
|\tr(A^k)|
\le\sum_{j=1}^p|\lambda_j|^k
\le\|A\|_{\op}^{k-2}\sum_{j=1}^p\lambda_j^2
=a\|A\|_{\op}^{k-2}.
\label{eq:trace-power-bound}
\end{equation}
Using \eqref{eq:trace-power-bound}, \(r_n\le1/2\), and summing a geometric
series,
\begin{align}
|R_{3,n}|
&\le |q|\sum_{k=3}^\infty
\frac{|\theta|^k}{|q|^k}
a\|A\|_{\op}^{k-2} \le C\frac{a|\theta|^3\|A\|_{\op}}{|q|^2}
\le C|t|^3\frac{a^{3/2}}{m^2s_R^3}.
\label{eq:third-order-remainder}
\end{align}
Furthermore, \(2q-m=2\theta\), so
\[
\left|\frac{\theta^2a}{2q}-\frac{\theta^2a}{m}\right|
=|\theta|^2a\left|\frac{m-2q}{2qm}\right|
\le C|t|^3\frac{a}{m^2s_R^3}.
\]
Both errors vanish uniformly. The first satisfies
\[
\frac{a^{3/2}}{m^2s_R^3}
\le\frac{a^{3/2}}{m^2(2a/m)^{3/2}}
=\frac1{2^{3/2}\sqrt m},
\]
with the expression interpreted as zero when \(a=0\). For the second, put
\(\beta=p(p-1)/m^2\) and \(x=2a/m\).
The lower bound \eqref{eq:v-lower} gives \(s_R^2\ge\beta+x\), and hence
\[
\frac{a}{m^2s_R^3}
\le\frac{x}{2m(\beta+x)^{3/2}}.
\]
Differentiation shows that \(x(\beta+x)^{-3/2}\) is maximized at
\(x=2\beta\). Therefore
\[
\frac{a}{m^2s_R^3}
\le\frac{C}{m\sqrt\beta}
=\frac{C}{\sqrt{p(p-1)}}\longrightarrow0.
\]
Finally,
\[
\frac{\theta^2a}{m}
=-\frac{t^2a}{ms_R^2}
=-\frac{t^2}{2}\frac{2a/m}{s_R^2}.
\]
Substitution into \eqref{eq:ratio} proves
\eqref{eq:transform-comparison}.  None of the bounds used an eigenvalue of
\(R\).
\end{proof}

\subsection{The established null CLT, including the hard edge}
\label{sec:proof-null}

\begin{lemma}[Null log-determinant CLT]
\label{lem:null-clt}
If \(p\to\infty\) and \(m\ge p\), then
\begin{equation}
\frac{\log(|\Rhat_0|)-b_{m,p}}{\sqrt{v_{m,p}}}
\Rightarrow N(0,1),
\label{eq:null-clt}
\end{equation}
where \(\Rhat_0\) denotes the sample correlation matrix under \(R=I_p\).
\end{lemma}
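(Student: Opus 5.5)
Under \(R=I_p\) the Pearson matrix is a Gram matrix of independent uniform directions, so the determinant factors into independent beta variables. The plan is to check Lyapunov's condition for a triangular array of independent centered summands, in all regimes at once. Rotational invariance of \(g_i\sim N_m(0,I_m)\), i.i.d.\ in \(i\), gives \(u_i=g_i/\|g_i\|\) i.i.d.\ uniform on \(S^{m-1}\). Gram--Schmidt then gives
\[
|\Rhat_0|=\prod_{j=2}^p\|P_{j-1}^\perp u_j\|^2,
\]
where \(P_{j-1}\) projects onto \(\mathrm{span}(u_1,\ldots,u_{j-1})\). Conditionally on the first \(j-1\) directions, \(\|P_{j-1}^\perp u_j\|^2\sim\mathrm{Beta}((m-j+1)/2,(j-1)/2)\), and this law does not depend on the conditioning. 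The factors \(B_j\) are therefore independent, as in \cite{Rouault2007}.

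Write \(\log(|\Rhat_0|)=\sum_{j=2}^p\log B_j\) with independent summands. By the Mellin transform
\[
\E(B^t)=\frac{\Gamma(a+t)\Gamma(a+b)}{\Gamma(a)\Gamma(a+b+t)},
\]
the cumulants of \(\log B_j\) are \(\psi^{(k-1)}((m-j+1)/2)-\psi^{(k-1)}(m/2)\). Summing reproduces \(b_{m,p}\) and \(v_{m,p}\), consistently with Lemma~\ref{lem:wishart-mean}.

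It remains to verify Lyapunov's condition through fourth cumulants,
\[
\kappa_4\bigl(\log|\Rhat_0|\bigr)\big/v_{m,p}^2\to0,
\]
which suffices for independent summands because the fourth central moment of each summand is \(\kappa_4+3\kappa_2^2\). Here
\[
\kappa_4\bigl(\log|\Rhat_0|\bigr)=\sum_{j=2}^p\Bigl\{\psi_3\Bigl(\frac{m-j+1}{2}\Bigr)-\psi_3\Bigl(\frac m2\Bigr)\Bigr\}.
\]
Two estimates are needed. First, \(\sum_j\kappa_2(\log B_j)^2\le\max_j\kappa_2(\log B_j)\cdot v_{m,p}\). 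Second, \(\psi_3\) is decreasing, so each \(\kappa_4\) term is nonnegative and bounded by a multiple of \(\kappa_2\) times \(\max_j\psi_1\)-type quantities, via \(\psi_3(x)\le Cx^{-2}\psi_1(x)\)-like comparisons of the series. Concretely I would use \(\psi^{(k)}(x)\asymp x^{-k}+x^{-k-1}\). This gives
\[
\psi_3(x)-\psi_3(y)\le C\,(x^{-2}+x^{-3})\,\{\psi_1(x)-\psi_1(y)\}\quad\text{for }x\le y.
\]
Hence \(\kappa_4/v^2\le C\max_j(x_j^{-2}+x_j^{-3})/v_{m,p}\), where \(x_j=(m-j+1)/2\).

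This ratio needs a two-case split. If \(m-p\to\infty\), the maximum is at most \(C/(m-p+1)^2\). Combined with \(v_{m,p}\ge p(p-1)/m^2\) from \eqref{eq:v-lower} and with \(v\gtrsim\log(m/(m-p))\)-type bounds near the edge, the ratio tends to zero. I would verify the dilute case separately, where \(x_j\asymp m\) and the ratio is \(O(1/(m^2v))=O(1/p^2)\).

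The \textbf{main obstacle} is the hard edge \(m-p=O(1)\). There the last few factors have \(x_j=O(1)\), so their individual cumulants are \(O(1)\) and not small, and the crude maximum bound above fails. I would instead bound \(\sum_j\kappa_4(\log B_j)\) directly by \(\sum_{k\ge1}Ck^{-3}=O(1)\), after reindexing \(k=m-j+1\) and using \(\psi_3(x)\le Cx^{-3}\) for large \(x\) and boundedness for small \(x\). This has to be compared with \(v_{m,p}^2\asymp(\log p)^2\to\infty\), which uses the hard-edge asymptotic \(v_{m,p}=2\log p+O(1)\) stated in Section~\ref{sec:hard-edge}. In fact the same direct bound \(\sum_j\kappa_4\le C\sum_{k\ge m-p+1}k^{-3}\) handles every regime uniformly once paired with the appropriate lower bound on \(v_{m,p}\). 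In each regime the fourth cumulant is dominated by the square of the variance, and Lyapunov's condition yields \eqref{eq:null-clt}.
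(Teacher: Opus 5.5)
Your route is the paper's own (Appendix~\ref{app:null-clt}): Rouault's independent beta factors, then Lyapunov with exponent four via \(\E(Y_j^4)=\kappa_{4,j}+3\kappa_{2,j}^2\) and \(\sum_j\kappa_{2,j}^2\le(\max_j\kappa_{2,j})\,v_{m,p}\). Your fourth-cumulant step is correct and somewhat more compact than the paper's regime-by-regime bounds on \(\sum_j d_j\). Since \((u+k)^{-5}\le u^{-2}(u+k)^{-3}\), one has \(-\psi_4(u)\le 12u^{-2}\{-\psi_2(u)\}\). Hence \(\sum_j\kappa_{4,j}\le 12\,x_p^{-2}\,v_{m,p}\) with \(x_p=(m-p+1)/2\).

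The gap is the other half of Lyapunov's condition. You assert that \(\kappa_4(\log|\Rhat_0|)/v_{m,p}^2\to0\) suffices, but the fourth-moment sum also contains \(3\sum_j\kappa_{2,j}^2\). Your inequality only reduces that term to \(\max_j\kappa_{2,j}/v_{m,p}\to0\), which you never verify, and it is not automatic.
\begin{itemize}
\item In the dilute regime the crude bound \(\kappa_{2,p}\le\psi_1(x_p)\approx 2/m\) is useless, because \(v_{m,p}\asymp p^2/m^2\) can be far smaller than \(1/m\) (e.g.\ \(p=m^{1/3}\)). You need the difference bound \(\kappa_{2,j}=\int_{x_j}^{m/2}\{-\psi_2(u)\}\dd u\le C(j-1)/m^2\), which with \eqref{eq:v-lower} gives \(\max_j\kappa_{2,j}/v_{m,p}=O(1/p)\).
\item For \(p>m/2\) with \(m-p\to\infty\), one has \(\kappa_{2,p}\le\psi_1((m-p+1)/2)\le C/(m-p+1)\to0\), while \(v_{m,p}\) stays bounded below.
\item At the hard edge, \(\kappa_{2,p}=\psi_1((m-p+1)/2)-\psi_1(m/2)\) is of order one. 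Only \(v_{m,p}\to\infty\) makes the ratio vanish.
\end{itemize}
These are exactly the \(\max_jw_j/v_{m,p}\) estimates the paper carries out in each of its three regimes.

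Relatedly, your ``main obstacle'' is misplaced. For the fourth cumulant, the crude bound \(Cx_p^{-2}/v_{m,p}\le 4C/v_{m,p}\to0\) already works at the hard edge, since \(x_p\ge1/2\) and \(v_{m,p}\to\infty\). The hard edge genuinely bites in the variance-negligibility condition you skipped: there a single beta factor has variance of order one, and only the divergence \(v_{m,p}\sim2\log p\) rescues the argument.
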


\begin{proof}
This is an established null result.  The classical Gram--Schmidt
factorization is
\begin{equation}
|\Rhat_0|\stackrel d=\prod_{j=2}^pB_j,\qquad
B_j\sim\operatorname{Beta}\left(
\frac{m-j+1}{2},\frac{j-1}{2}\right)
\label{eq:null-beta-product}
\end{equation}
with independent factors.  The independent beta product is
\cite[Proposition~2.1(2)]{Rouault2007}; equivalent determinant Mellin
transform formulas appear in \cite[p.~150]{Muirhead1982},
\cite[Lemma~5.10]{JiangYang2013}, and \cite[equation~(1)]{XieSun2021}.
The square endpoint \(m=p\) is included
in \cite[Theorem~3.2(2)]{Rouault2007}.  Taking the first two derivatives
of each beta Mellin transform gives exactly \(b_{m,p}\) and \(v_{m,p}\).
For readability and to verify that no rate of \(m-p\) is omitted,
Appendix~\ref{app:null-clt} gives a complete Lyapunov proof as
Lemma~\ref{lem:app-null-lyapunov}.
\end{proof}

\subsection{Completion via the null reference distribution}
\label{sec:proof-completion}

\begin{proof}[Proof of Theorem~\ref{thm:exact}]
Write
\[
T_R=\log(|\Rhat|)-\E(\log(|\Rhat|)).
\]
For the actual correlation matrix \(R\), the exact identity
\eqref{eq:M-minus-E} says
\begin{equation}
T_R=M_R-E_R.
\label{eq:TR-MR-ER}
\end{equation}
Neither this identity, Lemma~\ref{lem:remainder}, nor
Lemma~\ref{lem:transform} assumes \(R=I_p\).

The matrix \(I_p\) is now introduced only as a reference distribution. If
\(R=I_p\), write
\[
T_I=\log(|\Rhat_0|)-b_{m,p}=M_I-E_I.
\]
Lemma~\ref{lem:null-clt} gives
\[
\frac{T_I}{\sqrt{v_{m,p}}}\Rightarrow N(0,1).
\]

Take an arbitrary subsequence. Since
\[
0\le\alpha_n:=\frac{v_{m,p}}{s_R^2}\le1,
\]
there is a further subsequence along which
\(\alpha_n\to\alpha\in[0,1]\).  On this further subsequence,
\begin{align}
\frac{T_I}{s_R}
&=\frac{\sqrt{v_{m,p}}}{s_R}
\frac{T_I}{\sqrt{v_{m,p}}}
=\sqrt{\alpha_n}\,
\frac{T_I}{\sqrt{v_{m,p}}}
\Rightarrow N(0,\alpha)
\label{eq:null-general-scale}
\end{align}
by Slutsky's theorem.  When \(\alpha=0\), \(N(0,\alpha)\) means the point
mass at zero.

We still need to replace \(T_I\) by \(M_I\). Lemma~\ref{lem:remainder}
applied to \(I_p\) gives
\[
\frac{\E(E_I^2)}{v_{m,p}}\longrightarrow0.
\]
Since \(s_R^2\ge v_{m,p}\),
\[
\frac{\E(E_I^2)}{s_R^2}
\le\frac{\E(E_I^2)}{v_{m,p}}\longrightarrow0.
\]
Thus \(E_I/s_R\to0\) in probability.  From \(M_I=T_I+E_I\) and
\eqref{eq:null-general-scale},
\begin{equation}
\frac{M_I}{s_R}\Rightarrow N(0,\alpha).
\label{eq:MI-general-scale}
\end{equation}

Write
\[
\phi_R(t)=\E(e^{itM_R/s_R}),
\qquad
\phi_I(t)=\E(e^{itM_I/s_R}).
\]
Equation
\eqref{eq:MI-general-scale} gives
\[
\phi_I(t)\longrightarrow e^{-\alpha t^2/2}.
\]
Lemma~\ref{lem:transform} gives
\[
\frac{\phi_R(t)}{\phi_I(t)}
=\exp\left(-\frac{t^2}{2}(1-\alpha_n)+o(1)\right)
\longrightarrow e^{-(1-\alpha)t^2/2},
\]
because \((2a/m)/s_R^2=1-\alpha_n\to1-\alpha\). Therefore the product
of the two limiting factors gives
\[
\phi_R(t)\longrightarrow e^{-t^2/2}.
\]
L\'evy's continuity theorem yields \(M_R/s_R\Rightarrow N(0,1)\).
Finally, \(E_R/s_R\to0\) in probability by Lemma~\ref{lem:remainder};
Slutsky's theorem applied to \eqref{eq:TR-MR-ER} yields
\[
\frac{T_R}{s_R}\Rightarrow N(0,1).
\]
The exact mean in Lemma~\ref{lem:wishart-mean} identifies \(T_R\) with
the numerator of \eqref{eq:main-clt}.

We obtained the desired limit along a further subsequence of an arbitrary
subsequence.  The subsequence principle therefore proves it for the original
sequence \cite[Theorem~2.6, p.~20]{Billingsley1999}.
\end{proof}

\subsection{Equivalence with the elementary normalization}
\label{sec:proof-elementary-normalization}

\begin{lemma}[Equivalence of the two normalizations]
\label{lem:normalization-equivalence}
If \(p\to\infty\) and \(m>p\), then
\begin{equation}
\frac{b_{m,p}-\mu_{0,m,p}}{\sigma_{0,m,p}}\longrightarrow0,
\qquad
\frac{v_{m,p}}{\sigma_{0,m,p}^2}\longrightarrow1.
\label{eq:null-normalization-equivalence}
\end{equation}
Consequently, after adding \(\log(|R|)\) to both centers and \(2a/m\) to
both variances, the resulting standardized statistics differ by \(o_p(1)\).
\end{lemma}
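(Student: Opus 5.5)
The plan is to reduce both statements to explicit asymptotics of finite sums of digamma and trigamma values, and then check each against the closed forms \eqref{eq:elementary-null-mean}--\eqref{eq:elementary-null-var}. The errors must be uniform in the gap \(d=m-p\ge1\). Reindexing with \(k=m-j+1\) gives
\[
b_{m,p}=\sum_{k=d+1}^{m-1}\Bigl\{\psi\bigl(\tfrac k2\bigr)-\psi\bigl(\tfrac m2\bigr)\Bigr\},
\qquad
v_{m,p}=\sum_{k=d+1}^{m-1}\Bigl\{\psi_1\bigl(\tfrac k2\bigr)-\psi_1\bigl(\tfrac m2\bigr)\Bigr\}.
\]
Every argument satisfies \(k/2\ge1\). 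So I will use the expansions
\[
\psi(x)=\log x-\tfrac1{2x}+O(x^{-2}),\qquad
\psi_1(x)=\tfrac1x+\tfrac1{2x^2}+O(x^{-3}),
\]
both with absolute constants for \(x\ge1\), as in \cite[\S5.11, \S5.15]{OlverEtAl2010}.

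\textbf{Mean.} Substituting the digamma expansion turns \(b_{m,p}\) into three pieces:
\[
\sum_{k=d+1}^{m-1}\log(k/m)\;-\;\sum_{k=d+1}^{m-1}\bigl(k^{-1}-m^{-1}\bigr)\;+\;O\Bigl(\sum_{k>d}k^{-2}\Bigr).
\]
The first piece is \(\log\{(m-1)!/d!\}-(p-1)\log m\). I will evaluate it by Stirling's formula with its explicit \(1/(12x)\) correction. The second piece is a harmonic difference \(H_{m-1}-H_d-(p-1)/m\).

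Collecting terms should reproduce \(\mu_{0,m,p}=(d+\tfrac12)\log(d/m)-p(m-1)/m\) exactly, up to a remainder \(\mathrm{Err}_b=O(1/d-1/m)+O(\log(m/d))/\ldots\). I expect this remainder to be bounded by \(C\min\{1,\;p/(m\,d)\}\). The \(\tfrac12\log(d/m)\) term in \(\mu_0\) comes from combining the \(\tfrac12\log\) term of Stirling with \(-\tfrac12\{\log m-\log d\}\) from the harmonic sum. Tracking this bookkeeping is routine.

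\textbf{Variance.} Similarly,
\[
v_{m,p}=\sum_{k=d+1}^{m-1}\Bigl(\tfrac2k-\tfrac2m\Bigr)+\sum_{k=d+1}^{m-1}\Bigl(\tfrac2{k^2}-\tfrac2{m^2}\Bigr)+O\Bigl(\sum_{k>d}k^{-3}\Bigr).
\]
I will compare the first sum with \(2\log(m/d)-2p/m=\sigma_{0,m,p}^2\) through \(H_{m-1}-H_d=\log(m/d)+O(1/d)\). This gives \(v_{m,p}=\sigma^2_{0,m,p}+O(\min\{1,p/(md)\})\), again with the finer cancellation of the \(\tfrac1{2k}\) terms handled by Euler--Maclaurin.

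\textbf{Case split.} To finish, I pass to subsequences with \(p/m\to y\in[0,1]\) and treat three cases.

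If \(y=1\) and \(d\) is bounded, or more generally \(d=o(m)\), then \(\sigma_0^2\sim2\log(m/d)\to\infty\). Both errors are \(O(1)\), so the ratio tends to \(1\) and the mean error is \(O(1)/\sqrt{\log(m/d)}\to0\). This includes \(d=1,2,3\).

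If \(y\in(0,1)\), then \(\sigma_0^2\to-2\{y+\log(1-y)\}>0\) and the errors are \(O(1/m)\).

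If \(y=0\), the scale collapses: \(\sigma_0^2\sim p^2/m^2\). This is the main obstacle. Here the crude errors \(O(p/m^2)\) must be shown to be \(o(p/m)\) for the mean and \(o(p^2/m^2)\) for the variance. The first holds trivially. The second requires that the \(O(1/d)\) harmonic error not survive. So I will instead expand \(\sum(2/k-2/m)\) directly, via \(\log(1-p/m)\) series and exact Euler--Maclaurin, to show the discrepancy is \(O(p/m^2)=o(p^2/m^2)\) because \(p\to\infty\). If needed, I would alternatively use the monotone integral bounds from the proof of \eqref{eq:v-lower} together with the matching upper bound \(\psi_1(a)-\psi_1(b)\le 2(b-a)/(ab)\)-type estimates, obtained by sandwiching \(-\psi_2\) between \(x^{-2}\) and \(x^{-2}+x^{-3}\). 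These would give \(v_{m,p}/(p^2/m^2)\to1\) directly.

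\textbf{Consequence.} For the final sentence of the lemma, the two standardized statistics differ by
\[
\frac{b_{m,p}-\mu_{0,m,p}}{\sigma_{m,p}(R)}
\]
plus a rescaling factor \(s_R/\sigma_{m,p}(R)\). Since \(\sigma_{m,p}(R)\ge\sigma_{0,m,p}\), the first part tends to \(0\). Since \(|s_R^2-\sigma^2_{m,p}(R)|=|v_{m,p}-\sigma_{0,m,p}^2|=o(\sigma_{0,m,p}^2)\le o(\sigma^2_{m,p}(R))\), the factor tends to \(1\). Combined with the tightness from Theorem~\ref{thm:exact} and Slutsky's theorem, the difference is \(o_p(1)\).
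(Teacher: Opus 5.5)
Your proposal is correct and is essentially the paper's proof. The shared ingredients are:
\begin{itemize}
\item the reindexing $k=m-j+1$;
\item uniform digamma and trigamma expansions, combined with Stirling and harmonic-number comparisons, giving errors of order at most $1/d$ uniformly in the gap;
\item a subsequence split on $p/m$;
\item the refined dilute variance expansion obtained by sandwiching $-\psi_2$, which is exactly how the paper handles $p/m\to0$, where $O(1/d)$ is too crude for the variance;
\item the same Slutsky argument after adding $\log(|R|)$ and $2a/m$.
\end{itemize}

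Fix three slips when writing it out:
\begin{itemize}
\item The target is $\mu_{0,m,p}=(d-\tfrac12)\log(m/d)-p+p/m$; you flipped the sign of the $d$ term.
\item Its half-log arises as $+\tfrac12\log(m/d)$ from Stirling plus $-\log(m/d)$ from the harmonic sum. The harmonic coefficient is one, not one half.
\item The valid sandwich is $x^{-2}\le-\psi_2(x)\le x^{-2}+2x^{-3}$, not $x^{-2}+x^{-3}$.
\end{itemize}
Also, for the mean the coarse $O(1/d)$ bound already suffices in every regime, so your conjectured $p/(md)$ refinement is not needed there.
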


\begin{proof}
Theorem~6 of \cite{JiangQi2015} establishes the null central limit theorem
with the elementary normalization when \(m-p\ge4\). For completeness,
Appendix~\ref{app:normalization} proves directly that the elementary and
exact normalizations are equivalent for every \(m>p\), including the fixed
gaps \(m-p=1,2,3\); see Lemma~\ref{lem:app-normalization}.

It remains only to check the \(R\)-dependent term.  Adding \(\log(|R|)\) to both
centers is exact.  Adding the same nonnegative quantity \(2a/m\) to both
variances preserves their ratio because
\[
\left|
\frac{v_{m,p}+2a/m}{\sigma_{0,m,p}^2+2a/m}-1
\right|
=\frac{|v_{m,p}-\sigma_{0,m,p}^2|}
{\sigma_{0,m,p}^2+2a/m}
\le
\left|\frac{v_{m,p}}{\sigma_{0,m,p}^2}-1\right|.
\]
The centering statement, the variance statement, Slutsky's theorem, and
Theorem~\ref{thm:exact} prove Corollary~\ref{cor:elementary}.
\end{proof}

\section{Scope and discussion}
\label{sec:discussion}

For the determinant likelihood ratio test of complete independence,
Theorem~\ref{thm:exact} supplies the Gaussian shift and variance under any
specified alternative sequence, including \(R_n\to I_p\). Related null
calibrations, asymptotic refinements, and finite sample comparisons appear in
\cite{JiangYang2013,QiWangZhang2019,HuQi2023,BaiZhangLi2024,ParolyaEtAl2024};
we omit power calculations. The determinant is global and does not
localize correlations. Geometrically, the theorem is an all aspect ratio log
volume limit allowing near singular dependence and diverging common factors.

\subsection{Linear spectral statistics beyond the logarithm}

The proof does not cover a general linear spectral statistic. Its immediate
extension is the log affine class \(f(x)=c_0+c_1x+c_2\log(x)\). Since
\(\tr(\Rhat)=p\),
\[
\tr(f(\Rhat))=p(c_0+c_1)+c_2\log(|\Rhat|),
\]
so
Theorem~\ref{thm:exact} applies after deterministic translation and scaling
when \(c_2\ne0\); for \(c_2=0\) the statistic is deterministic. For general
\(f\), the diagonal normalization no longer separates into scalar chi square
terms, and the matrix gamma transform used above no longer retains its
determinant power and linear trace form. Existing resolvent-based linear-spectral-statistic methods typically
work under proportional growth and impose regularity conditions on the
population correlation matrix and the test function
\cite{GaoEtAl2017,MestreVallet2017,YinLiTianZheng2022,
YinZhengZou2023,ChenZhengZou2026}. A general all regime theorem would require a different
argument.

\subsection{Limits of the extension beyond Gaussianity}

Corollary~\ref{cor:matrix-spherical} is exact because a common matrix radius
cancels. It does not cover independent non-Gaussian observations: ordinary
elliptical samples have observation specific radii and unequal weights.
Indeed, \cite{ParolyaEtAl2024} has a fourth moment mean correction, and
independent component and elliptical linear spectral statistic CLTs can differ
even at the identity \cite{YinZhengZou2023}. Thus, outside Gaussian or the
matrix spherical class, \(R_n\) alone does not in general determine the
present centering. Uncentered null
universality is broader
\cite{HeinyParolya2024,LiLiuXieZhou2026}, but it does not supply the centered,
arbitrary correlation matrix theorem considered here.

Within the independent Gaussian Pearson model, Theorem~\ref{thm:exact} covers
every \(p\to\infty\), \(n-1\ge p\), and positive definite \(R_n\). It extends
proportional or spectrally regular theorems to the nonsingular dimensional
domain, without superseding fixed dimensional or non-Gaussian results. The
second chaos projection linearizes the diagonal normalization, while the
Wishart transform handles the \(R\)-dependent term and null determinant jointly.

Close to identity matrices are not exceptional. When \(R_n\to I_p\), the
\(R_n\)-dependent variance can vanish, but the null variance remains and the
comparison reduces to the null law. When an eigenvalue vanishes or a spike
grows, the Frobenius term controls the matrix logarithm expansion.

Three directions remain open. First, an unrestricted non-Gaussian alternative theorem would need a substitute for both exact Gaussian chaos orthogonality and the Wishart transform. Second, a uniform normal approximation rate would turn the limiting theorem into quantified finite sample guarantees. Third, joint limit theory with entrywise or spectral statistics could combine sensitivity to global redundancy with the ability to localize departures.

\appendix

\section{Branch choices in the Wishart transform}
\label{app:wishart-branches}

For \(z\ne0\), the power \(z^\alpha=\exp(\alpha\log(z))\) is ambiguous because
changing \(\log(z)\) by \(2\pi i k\) generally changes the power.  This affects
\(\left|C_R(\theta)\right|^{-q(\theta)}\), where the vertical bars denote a
determinant, not a modulus.  Nor can one use the principal scalar logarithm of
the determinant.  For example, if
\(m=p=4\), \(R=I_4\), and \(\theta=2i\), then
\(C_R(\theta)=(1+i)I_4\), whose determinant is \(-4\), on the scalar
principal logarithm branch cut, although all four eigenvalues lie in the open
right half plane.  We therefore use the trace of the principal matrix
logarithm.  The affine function \(q(\theta)\) needs no branch; \(\Gamma_p\), the multivariate gamma function, is
single valued and pole free below; \(2^{pq(\theta)}\) uses the real logarithm
of \(2\); and powers of \(|W|\), for \(W\succ0\), use the real logarithm of
the positive determinant.  The next lemma verifies all remaining branches.

\begin{lemma}[One parameter branch verification]
\label{lem:app-wishart-branches}
Fix integers \(m\ge p\ge1\), a positive definite correlation matrix \(R\),
and write \(A=R-I_p\). Define
\[
q(\theta)=\frac m2+\theta,
\qquad c(\theta)=1+\frac{2\theta}{m},
\qquad C_R(\theta)=I_p+\frac{2\theta}{m}R,
\qquad B_R(\theta)=I_p+\frac{\theta}{q(\theta)}A.
\]
Let
\[
\delta_{m,p}=\min\left(\frac{m-p+1}{2},\frac{m}{2p}\right),
\qquad
\mathcal H_{m,p}
=\{\theta\in\mathbb C:\Re(\theta)>-\delta_{m,p}\}.
\]
Then \(\mathcal H_{m,p}\) is a connected open half-plane containing the
imaginary axis.  On this half-plane,
\(\Re(q(\theta))>(p-1)/2\), the spectrum of \(C_R(\theta)\) lies in the open
right half-plane, and the spectrum of \(B_R(\theta)\) avoids
\((-\infty,0]\).  With \(\Log\) denoting the principal scalar or matrix
logarithm, define
\begin{equation}
\left|C_R(\theta)\right|^{-q(\theta)}
:=\exp\left(-q(\theta)\tr(\Log(C_R(\theta)))\right).
\label{eq:app-determinant-power}
\end{equation}
This is a single-valued, nonzero holomorphic function of \(\theta\) on
\(\mathcal H_{m,p}\), and
\begin{align}
&\int_{W\succ0}|W|^{q(\theta)-(p+1)/2}
 \exp\left(-\frac12\tr(C_R(\theta)W)\right)\dd W\notag\\
&\qquad=2^{pq(\theta)}\Gamma_p(q(\theta))
 \exp\left(-q(\theta)\tr(\Log(C_R(\theta)))\right).
\label{eq:app-complex-matrix-gamma}
\end{align}
Moreover, the factorization \(C_R(\theta)=c(\theta)B_R(\theta)\) is branch
compatible in the sense that
\begin{equation}
\tr(\Log(C_R(\theta)))
=p\Log(c(\theta))+\tr(\Log(B_R(\theta))).
\label{eq:app-log-factorization}
\end{equation}
Consequently, since \(C_I(\theta)=c(\theta)I_p\),
\[
\frac{|C_R(\theta)|^{-q(\theta)}}
{|C_I(\theta)|^{-q(\theta)}}
=
\exp\left(
-q(\theta)\tr\left(\Log\left(B_R(\theta)\right)\right)
\right).
\]
Here both determinant powers are defined by
\eqref{eq:app-determinant-power}.
Finally, whenever \(\|\theta A/q(\theta)\|_{\op}<1\), the operator norm
convergent identity
\[
\Log(B_R(\theta))
=\sum_{k=1}^\infty\frac{(-1)^{k+1}}{k}
\left(\frac{\theta A}{q(\theta)}\right)^k
\]
holds.
\end{lemma}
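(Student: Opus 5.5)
The plan is to check the pointwise spectral facts first, then prove the integral identity \eqref{eq:app-complex-matrix-gamma} on a real interval, and finally extend it to \(\mathcal H_{m,p}\) by analytic continuation. The branch factorization \eqref{eq:app-log-factorization} and the power series statement are then checked eigenvalue by eigenvalue.

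\emph{Pointwise spectral facts.} Let \(\nu_1,\ldots,\nu_p>0\) be the eigenvalues of \(R\). Since \(\sum_j\nu_j=\tr(R)=p\), each satisfies \(\nu_j\le p\).
\begin{itemize}
\item \(\mathcal H_{m,p}\) is an open half-plane, hence connected, and it contains the imaginary axis because \(\delta_{m,p}>0\).
\item \(\Re(\theta)>-(m-p+1)/2\) gives \(\Re(q(\theta))>(p-1)/2\).
\item The eigenvalues of \(C_R(\theta)\) are \(\gamma_j=1+2\theta\nu_j/m\). Their real parts are \(1+2\Re(\theta)\nu_j/m>1-2\delta_{m,p}p/m\ge0\), using \(\delta_{m,p}\le m/(2p)\). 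The same argument with \(\nu=1\) shows \(\Re(c(\theta))>0\).
\item Since \(q=(m/2)c\), the eigenvalues of \(B_R(\theta)\) are
\[
\mu_j=1+\frac{\theta(\nu_j-1)}{q}=\frac{m/2+\theta\nu_j}{q}=\frac{\gamma_j}{c}.
\]
\end{itemize}
Both \(\gamma_j\) and \(c\) have principal arguments in \((-\pi/2,\pi/2)\). Hence \(\arg\gamma_j-\arg c\in(-\pi,\pi)\), which gives \(\mu_j\notin(-\infty,0]\) and the exact identity \(\Log\gamma_j=\Log c+\Log\mu_j\).

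\emph{Logarithm factorization and holomorphy.} Summing the scalar identity over \(j\) proves \eqref{eq:app-log-factorization}. Here \(\tr(\Log(M))=\sum\Log(\text{eigenvalues})\) for a matrix \(M\) with spectrum off the closed negative axis. The ratio formula follows immediately, since \(\tr(\Log(C_I))=p\Log c\).

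For holomorphy, \(\theta\mapsto\Log(C_R(\theta))\) is holomorphic because it is given by the holomorphic functional calculus (a Cauchy integral over a fixed contour around the spectrum, locally in \(\theta\)). Therefore \(\exp(-q\tr\Log C_R)\) is holomorphic and nonzero. For the power series claim, both sides are holomorphic functions of \(X=\theta A/q\) on \(\{\|X\|_{\op}<1\}\) and agree for small \(X\). Alternatively, since \(A\) is symmetric, diagonalize \(X\) and use the scalar series with \(|x|<1\).

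\emph{The integral identity.} For real \(\theta\in(-\delta_{m,p},\infty)\), \(C=C_R(\theta)\) is real positive definite and \(q\) is real with \(q>(p-1)/2\). The classical real matrix gamma integral then applies: substitute \(W=C^{-1/2}VC^{-1/2}\), whose Jacobian is \(|C|^{-(p+1)/2}\). This gives \(2^{pq}\Gamma_p(q)|C|^{-q}\), and for real positive definite \(C\) the real power \(|C|^{-q}\) coincides with \eqref{eq:app-determinant-power}.

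Both sides are then extended holomorphically to \(\mathcal H_{m,p}\). The right side is holomorphic there because \(\Gamma_p\) is holomorphic where \(\Re(q)>(p-1)/2\). The left side needs more care, and this step is the main obstacle. I would bound the integrand uniformly for \(\theta\) in a compact \(K\subset\mathcal H_{m,p}\) by
\[
|W|^{\Re(q)-(p+1)/2}\exp\left(-\tfrac12\tr\left((\Re C_R(\theta))W\right)\right).
\]
Here \(\Re C_R(\theta)=I_p+2\Re(\theta)R/m\) has least eigenvalue bounded below by some \(\epsilon_K>0\) on \(K\). The range of \(\Re(q)\) is confined to a compact subinterval of \(((p-1)/2,\infty)\). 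Splitting the exponent range and using the finiteness of the real integral on both ends gives an integrable dominating function. Morera's theorem combined with dominated convergence (or Fubini) then shows the left side is holomorphic on \(\mathcal H_{m,p}\).

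Since the two sides agree on a real interval with accumulation points, the identity theorem on the connected set \(\mathcal H_{m,p}\) yields \eqref{eq:app-complex-matrix-gamma} everywhere, in particular on the imaginary axis.
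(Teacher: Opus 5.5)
Your proposal is correct and follows essentially the same route as the paper. You use the same eigenvalue bounds coming from \(0<\nu_j\le p\), the same principal-argument comparison showing that the eigenvalues of \(B_R(\theta)\) avoid \((-\infty,0]\), and the same strategy for the integral: prove it on the real axis, then extend to \(\mathcal H_{m,p}\) by a locally dominated Morera argument and the identity theorem. The only minor differences are that you get holomorphy of \(\Log(C_R(\theta))\) from a Cauchy-integral functional calculus and derive the real matrix gamma identity by the substitution \(W=C^{-1/2}VC^{-1/2}\), whereas the paper uses the fixed orthogonal diagonalization of \(R\) and cites Muirhead's Theorem~2.1.11.
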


\begin{proof}
Let \(r_1,\ldots,r_p\) be the eigenvalues of \(R\), and choose an orthogonal
matrix \(O\) such that \(R=O\diag(r_1,\ldots,r_p)O^\top\). The same
diagonalization applies to every matrix function of \(R\) used below.
For a matrix \(M\) whose spectrum avoids \((-\infty,0]\), the principal
matrix logarithm is defined by the holomorphic functional calculus
\cite[Chapter~11]{Higham2008}. In particular, if
\(\lambda_1,\ldots,\lambda_p\) are the eigenvalues of \(M\), counted with
multiplicity, then
\[
\tr(\Log(M))=\sum_{j=1}^p\Log(\lambda_j).
\]

Since
\(\tr(R)=p\), \(0<r_j\le p\). For \(\theta\in\mathcal H_{m,p}\), the two
bounds in the definition of \(\delta_{m,p}\) give
\(\Re(q(\theta))>(p-1)/2\) and \(\Re(m/2+r_j\theta)>0\).  Hence every
eigenvalue \(1+2r_j\theta/m\) of \(C_R(\theta)\) lies in the open right
half-plane. Its principal matrix logarithm is therefore well defined. The
fixed diagonalization
\[
\Log(C_R(\theta))
=O\diag\left(
\Log\left(1+\frac{2r_1\theta}{m}\right),\ldots,
\Log\left(1+\frac{2r_p\theta}{m}\right)
\right)O^\top.
\]
This shows directly that it is holomorphic in
\(\theta\). Consequently,
\eqref{eq:app-determinant-power} defines a single-valued, nonzero
holomorphic function of \(\theta\) on \(\mathcal H_{m,p}\).

We prove \eqref{eq:app-complex-matrix-gamma} by continuation only in
\(\theta\).  For \(W\succ0\), the power in its integrand means
\(\exp((q(\theta)-(p+1)/2)\log(|W|))\), with a real logarithm.  Fix compact
\(K\subset\mathcal H_{m,p}\), and let \(q_-\) and \(q_+\) be the infimum
and supremum of \(\Re(q(\theta))\) on \(K\), while
\(u_K=\inf_{\theta\in K}\Re(\theta)\).  Then \(q_->(p-1)/2\), and the
Hermitian part \(I_p+2\Re(\theta)R/m\) of \(C_R(\theta)\) is at least
\(\varepsilon_KI_p\), where \(\varepsilon_K=1\) if \(u_K\ge0\) and
\(\varepsilon_K=1+2pu_K/m>0\) otherwise.  Here we used
\(u_K>-m/(2p)\) and \(0\prec R\preceq pI_p\).  Thus the integrand's modulus
is bounded by
\[
\left\{|W|^{q_--(p+1)/2}+|W|^{q_+-(p+1)/2}\right\}
\exp\left(-\frac{\varepsilon_K}{2}\tr(W)\right).
\]
Both terms are integrable real matrix gamma kernels.  Pointwise holomorphy
and this local domination show that the integral is holomorphic in \(\theta\),
for example by Morera's theorem.  Its proposed value is also holomorphic:
every gamma argument has positive real part and the matrix logarithm is
holomorphic.  For real \(\theta\in\mathcal H_{m,p}\), the equality is the
real matrix gamma identity \cite[Theorem~2.1.11]{Muirhead1982}.  The
one variable identity theorem proves it on the connected half-plane.

For the factorization, put \(n_j(\theta)=m/2+r_j\theta\).  Both
\(n_j(\theta)\) and \(q(\theta)\) lie in the open right half-plane, and the
eigenvalues of \(B_R(\theta)\) are
\(b_j(\theta)=n_j(\theta)/q(\theta)\).  The two principal arguments lie in
\((-\pi/2,\pi/2)\), so their difference lies in \((-\pi,\pi)\).  Therefore
\(b_j(\theta)\notin(-\infty,0]\). Moreover,
\(1+2r_j\theta/m=(2/m)n_j(\theta)\) and
\(c(\theta)=(2/m)q(\theta)\), so
\[
\begin{aligned}
\Log(b_j(\theta))
&=\Log(n_j(\theta))-\Log(q(\theta)),\\
\Log\left(1+\frac{2r_j\theta}{m}\right)
&=\Log(c(\theta))+\Log(b_j(\theta)).
\end{aligned}
\]
Summing over
\(j\) proves \eqref{eq:app-log-factorization} without any unrecorded
multiple of \(2\pi i\).

Finally, if \(\|\theta A/q(\theta)\|_{\op}<1\), the usual logarithm power
series converges absolutely in operator norm.  On every eigenvalue it equals
the principal scalar logarithm, so it equals the principal matrix logarithm
and proves the stated series identity.
\end{proof}

\section{The Hermite covariance identity}
\label{app:mehler-hermite}

This appendix records the precise consequence of Mehler's formula used in
\eqref{eq:mehler-hermite}.  The one-dimensional correlated-Hermite identity
is \cite[Proposition~2.2.1, p.~26]{NourdinPeccati2012}; the
Ornstein--Uhlenbeck semigroup and Mehler's formula are given by
\cite[Definition~2.8.1 and Theorem~2.8.2,
pp.~45--47]{NourdinPeccati2012}.  The elementary Gaussian calculation below
makes the multivariate identity and the cases \(\rho<0\) and
\(\rho=\pm1\) explicit.

\begin{lemma}[Hermite covariance consequence of Mehler's formula]
\label{lem:app-mehler-hermite}
Let \(G\) and \(G'\) be jointly Gaussian random vectors such that
\[
G,G'\sim N_m(0,I_m),
\qquad \Cov(G,G')=\rho I_m,
\qquad |\rho|\le1.
\]
For every \(\beta\in\mathbb N_0^m\),
\begin{equation}
\E(H_\beta(G')\mid G)
=\rho^{|\beta|}H_\beta(G)
\quad\text{almost surely}.
\label{eq:app-mehler-conditional}
\end{equation}
Consequently, for every
\(\alpha,\beta\in\mathbb N_0^m\),
\begin{equation}
\E(H_\alpha(G)H_\beta(G'))
=\1_{(\alpha=\beta)}\alpha!\rho^{|\alpha|}.
\label{eq:app-mehler-covariance}
\end{equation}
Here and below, \(0^0=1\).
\end{lemma}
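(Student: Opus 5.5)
The plan is to prove the conditional identity \eqref{eq:app-mehler-conditional} first and then deduce \eqref{eq:app-mehler-covariance} from it.

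\textbf{Conditional law of \(G'\).} Because \((G,G')\) is jointly Gaussian with \(\Cov(G,G')=\rho I_m\), we can write \(G'=\rho G+\sqrt{1-\rho^2}\,N\) almost surely, where \(N\sim N_m(0,I_m)\) is independent of \(G\).

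\begin{itemize}
\item For \(|\rho|<1\), set \(N=(G'-\rho G)/\sqrt{1-\rho^2}\). This \(N\) is Gaussian, uncorrelated with \(G\) (hence independent of it), and has covariance \(I_m\).
\item For \(\rho=\pm1\), we have \(\E\|G'-\rho G\|^2=2m-2\rho^2m=0\). Hence \(G'=\rho G\) almost surely, and any independent \(N\) works.
\end{itemize}

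\textbf{Reduction to one dimension.} The coordinate pairs \((G_\ell,N_\ell)\), \(\ell=1,\dots,m\), are mutually independent, and \(H_\beta\) is a product over coordinates. So \(\E(H_\beta(G')\mid G)=\prod_\ell \E(H_{\beta_\ell}(\rho G_\ell+\sqrt{1-\rho^2}N_\ell)\mid G_\ell)\), and it suffices to show that for \(g\) fixed and \(N\sim N(0,1)\),
\[
\E\, H_k\bigl(\rho g+\sqrt{1-\rho^2}\,N\bigr)=\rho^kH_k(g).
\]

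\textbf{One-dimensional identity.} I will use the generating function \(\sum_k H_k(x)s^k/k!=e^{sx-s^2/2}\). Since \(\E e^{s\sqrt{1-\rho^2}N}=e^{s^2(1-\rho^2)/2}\), the expectation of the generating function is
\[
\exp\bigl(s\rho g-s^2/2+s^2(1-\rho^2)/2\bigr)=\exp\bigl((\rho s)g-(\rho s)^2/2\bigr)=\sum_k H_k(g)\rho^ks^k/k!.
\]
Comparing coefficients of \(s^k\) gives the claim. Two justifications are needed here.
\begin{itemize}
\item The exchange of expectation and summation holds by dominated convergence, since \(\sum_k |H_k(x)||s|^k/k!\le e^{|s||x|+|s|^2/2}\), which is integrable against the Gaussian.
\item Alternatively, one can avoid generating functions: expand \(H_k(\rho g+\sqrt{1-\rho^2}N)\) using the Hermite addition formula and take expectations, since \(\E H_j(N)=0\) for \(j\ge1\).
\end{itemize}
At \(\rho=\pm1\) the identity reads \(H_k(\pm g)=(\pm1)^kH_k(g)\), which is the parity of Hermite polynomials. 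This is consistent with the convention \(0^0=1\), which is also what makes the case \(\rho=0\), \(k=0\) correct.

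\textbf{Covariance identity.} By the tower property and \eqref{eq:app-mehler-conditional},
\[
\E(H_\alpha(G)H_\beta(G'))=\rho^{|\beta|}\,\E(H_\alpha(G)H_\beta(G)).
\]
The orthogonality relation \eqref{eq:hermite-orthogonality} then gives \(\1_{(\alpha=\beta)}\alpha!\rho^{|\alpha|}\). Polynomial moments are finite, so all of these expectations exist.

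The only step needing real care is the degenerate case \(\rho=\pm1\), where the conditional representation must be constructed without dividing by \(\sqrt{1-\rho^2}\); the argument above handles it directly.
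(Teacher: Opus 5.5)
Your proposal is correct and follows essentially the same route as the paper: the coupling \(G'=\rho G+\sqrt{1-\rho^2}\,Z\) with \(Z\) independent of \(G\), the Hermite generating function applied coordinatewise, parity \(H_k(-x)=(-1)^kH_k(x)\) at \(\rho=\pm1\), and then the tower property with Hermite orthogonality. The only difference is cosmetic: you extract the \(k\)th coefficient by exchanging sum and expectation under dominated convergence and comparing power-series coefficients, whereas the paper differentiates \(k\) times under the conditional expectation at \(t=0\), using finite Gaussian exponential moments.
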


\begin{proof}
First suppose that \(|\rho|<1\), and define
\[
Z=\frac{G'-\rho G}{\sqrt{1-\rho^2}}.
\]
Because \(G\) and \(G'\) are jointly Gaussian, the pair \((G,Z)\) is also
jointly Gaussian. Direct covariance calculations give
\begin{align*}
\Cov(Z,G)
&=\frac{\Cov(G',G)-\rho\Cov(G,G)}{\sqrt{1-\rho^2}}=0,\\
\Cov(Z,Z)
&=\frac{\Cov(G',G')-\rho\Cov(G',G)
        -\rho\Cov(G,G')+\rho^2\Cov(G,G)}
        {1-\rho^2}\\
&=\frac{I_m-\rho^2I_m-\rho^2I_m+\rho^2I_m}
        {1-\rho^2}
=I_m.
\end{align*}
Thus \(Z\sim N_m(0,I_m)\).  Moreover, jointly Gaussian random vectors with
zero cross-covariance are independent, so \(Z\) is independent of \(G\).
Equivalently,
\begin{equation}
G'=\rho G+\sqrt{1-\rho^2}\,Z,
\qquad Z\sim N_m(0,I_m),
\qquad Z\ \text{independent of }G.
\label{eq:app-mehler-coupling}
\end{equation}

We next compute one coordinate at a time.  The generating function for the
probabilists' Hermite polynomials is
\begin{equation}
\exp(tx-t^2/2)
=\sum_{k=0}^\infty H_k(x)\frac{t^k}{k!}.
\label{eq:app-hermite-generating}
\end{equation}
For \(j\in\{1,\ldots,m\}\), equation
\eqref{eq:app-mehler-coupling} and the moment-generating function of
\(Z_j\sim N(0,1)\) give
\begin{align}
\E\left(\left.
  \exp(tG'_j-t^2/2)\,\right|\,G\right)
&=\exp(t\rho G_j-t^2/2)
  \E\left(\exp\left(t\sqrt{1-\rho^2}\,Z_j\right)\right)\notag\\
&=\exp(t\rho G_j-\rho^2t^2/2) =\sum_{k=0}^\infty
  \rho^kH_k(G_j)\frac{t^k}{k!}.
\label{eq:app-mehler-coordinate-generating}
\end{align}
On the other hand, differentiating the left side \(k\) times at \(t=0\)
and using \eqref{eq:app-hermite-generating} gives
\(\E(H_k(G'_j)\mid G)\).  This differentiation under the conditional
expectation is valid because a Gaussian random variable has finite
exponential moments in a neighborhood of \(0\).  Comparing the derivatives
at \(0\) on the two sides of
\eqref{eq:app-mehler-coordinate-generating} therefore yields
\[
\E(H_k(G'_j)\mid G)=\rho^kH_k(G_j).
\]

Conditional on \(G\), the coordinates \(G'_1,\ldots,G'_m\) are independent:
by \eqref{eq:app-mehler-coupling}, their remaining randomness comes from the
independent coordinates of \(Z\). Hence
\[
\begin{aligned}
\E(H_\beta(G')\mid G)
&=\prod_{j=1}^m\E(H_{\beta_j}(G'_j)\mid G)\\
&=\prod_{j=1}^m\rho^{\beta_j}H_{\beta_j}(G_j)
=\rho^{|\beta|}H_\beta(G),
\end{aligned}
\]
which proves \eqref{eq:app-mehler-conditional} when \(|\rho|<1\).

It remains to check \(\rho=\pm1\). In either case,
\[
\Cov(G'-\rho G,G'-\rho G)=(1-\rho^2)I_m=0,
\]
and therefore
\(G'=\rho G\) almost surely. The parity relation
\[
H_k(-x)=(-1)^kH_k(x)
\]
follows immediately by replacing \(t\) with \(-t\) in
\eqref{eq:app-hermite-generating}. Thus
\[
H_\beta(\rho G)=\rho^{|\beta|}H_\beta(G),
\qquad \rho\in\{-1,1\},
\]
which proves \eqref{eq:app-mehler-conditional} at both endpoints.

Finally, the tower property and the Hermite orthogonality relation
\eqref{eq:hermite-orthogonality} give
\begin{align*}
\E(H_\alpha(G)H_\beta(G'))
&=\E\left(
  H_\alpha(G)\E(H_\beta(G')\mid G)\right) =\rho^{|\beta|}
  \E(H_\alpha(G)H_\beta(G)) =\1_{(\alpha=\beta)}\alpha!\rho^{|\alpha|}.
\end{align*}
This is \eqref{eq:app-mehler-covariance}.
\end{proof}

\section{Exact finite-sample variance}
\label{app:exact-variance}

This appendix proves Proposition~\ref{prop:exact-cumulants}.  The Wishart
and Kibble bivariate-gamma distributions are classical; the calculations
below differentiate those cited identities and assemble the covariances
needed for the sample-correlation log determinant.

\begin{lemma}[Determinant--diagonal cross-covariance]
\label{lem:app-crosscov}
For every \(i\),
\begin{equation}
\Cov(\log(|W_0|),\log(Q_i))=\psi_1(m/2).
\label{eq:app-crosscov}
\end{equation}
\end{lemma}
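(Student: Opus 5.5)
The plan is to remove the dependence on \(R\) by orthogonal invariance, and then read off the covariance from Bartlett's decomposition. By \eqref{eq:Qi-gi} and the definition \eqref{eq:gi-definition}, \(Q_i=v_i^\top W_0v_i\) with \(v_i=R^{1/2}e_i\) and \(\|v_i\|=1\). I will choose an orthogonal \(O\) with \(O^\top v_i=e_1\). Since \(W_0=ZZ^\top\) with \(Z\) having independent \(N(0,1)\) entries, \(O^\top Z\stackrel d=Z\), and so \(O^\top W_0O\stackrel d=W_0\). Also \(|O^\top W_0O|=|W_0|\). Consequently the pair \((|W_0|,Q_i)\) has the same joint law as \((|W_0|,(W_0)_{11})\), and it suffices to prove
\[
\Cov(\log(|W_0|),\log((W_0)_{11}))=\psi_1(m/2)
\]
for the null Wishart matrix \(W_0\sim W_p(m,I_p)\).

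For that reduced statement I invoke Bartlett's decomposition \cite[Theorem~3.2.14]{Muirhead1982} in its lower triangular form \(W_0=TT^\top\), where \(T\) is lower triangular, the squares \(t_{jj}^2\sim\chi^2_{m-j+1}\), and all entries of \(T\) are independent. The top-left entry is \((W_0)_{11}=t_{11}^2\), and \(\log(|W_0|)=\sum_{j=1}^p\log(t_{jj}^2)\). By independence, every term with \(j\ge2\) is uncorrelated with \(\log(t_{11}^2)\), so the covariance reduces to
\[
\Var(\log(t_{11}^2))=\Var(\log(\chi_m^2)).
\]
This variance equals \(\psi_1(m/2)\): it is the second derivative of the cumulant generating function \(\log M(t)\) from \eqref{eq:chi-mellin} at \(t=0\), exactly as computed in the proof of Lemma~\ref{lem:second-chaos}. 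All moments involved are finite, since \(\log\chi^2_\nu\) has moments of every order.

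The only point requiring care is the matching of Bartlett's factorization with the coordinate \((W_0)_{11}\). The triangular factor must be oriented so that its first diagonal entry is exactly the \((1,1)\) entry of \(W_0\), and that entry carries \(m\) degrees of freedom. If one prefers the Cholesky convention \(W_0=T^\top T\) instead, I would apply the permutation that reverses the coordinates; this leaves both the law of \(W_0\) and \(|W_0|\) unchanged. An alternative route is to differentiate the joint Mellin transform \(\E(|W_0|^sQ_i^t)\) obtained from the matrix gamma integral, but that is more cumbersome. The invariance-plus-Bartlett argument is cleaner, and it is uniform in \(R\) because \(R\) enters only through the unit vector \(v_i\).
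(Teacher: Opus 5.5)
Your proposal is correct and follows essentially the paper's route: rotate \(v_i\) to \(e_1\) by orthogonal invariance, then factor \(|W_0|\) as \((W_0)_{11}\) times an independent factor, so the covariance collapses to \(\Var(\log\chi^2_m)=\psi_1(m/2)\). The only difference is the factorization used: the paper uses the one-step Schur complement \(|W|=W_{11}|W_{22\cdot1}|\), with independence from \cite[Theorem~3.2.10]{Muirhead1982}, whereas you use its iterated form, Bartlett's decomposition, with the orientation matched correctly (your lower-triangular \(T\) with \(W_0=TT^\top\) is the transpose of Muirhead's upper-triangular convention, so \((W_0)_{11}=t_{11}^2\sim\chi^2_m\)).
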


\begin{proof}
Recall from \eqref{eq:gi-definition} that
\(Q_i=v_i^\top W_0v_i\), where \(\|v_i\|=1\).  Choose an orthogonal matrix
\(O\) whose first row is \(v_i^\top\).  Orthogonal invariance of
\(W_0\sim W_p(m,I_p)\) gives
\[
OW_0O^\top\stackrel d=W_0,
\]
whose
upper-left entry is \(Q_i\). For a partitioned Wishart matrix, the
Schur-complement decomposition gives
\[
|OW_0O^\top|=Q_i|W_{22\cdot1}|,
\qquad Q_i\ \text{is independent of }W_{22\cdot1},
\]
by \cite[Theorem~3.2.10]{Muirhead1982}. Because an orthogonal
transformation does not change the determinant,
\[
\log(|W_0|)=\log(Q_i)+\log(|W_{22\cdot1}|).
\]
Taking covariance with \(\log(Q_i)\) gives
\[
\Cov(\log(|W_0|),\log(Q_i))
=\Var(\log(Q_i))=\psi_1(m/2).
\]
Since \(Q_i\sim\chi_m^2\), differentiating its gamma Mellin transform twice
gives the last equality, as in the proof of
Lemma~\ref{lem:second-chaos}.
\end{proof}

\begin{lemma}[Kibble covariance series]
\label{lem:app-kibble}
If \(i\ne j\), then
\begin{equation}
\Cov(\log(Q_i),\log(Q_j))
=c_m(r_{ij})
=\sum_{k=1}^\infty
\frac{(k-1)!}{k(m/2)_k}r_{ij}^{2k}.
\label{eq:app-cm}
\end{equation}
Furthermore, for every \(|r|\le1\),
\begin{equation}
0\le c_m(r)-\frac{2r^2}{m}
\le\frac{4r^4}{m^2}.
\label{eq:app-cm-bound}
\end{equation}
\end{lemma}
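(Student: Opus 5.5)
The plan is to compute \(\Cov(\log(Q_i),\log(Q_j))\) from the Kibble (Lancaster) bilinear expansion of the bivariate gamma law, and then to get the bound \eqref{eq:app-cm-bound} from Lemma~\ref{lem:second-chaos}. Put \(\alpha=m/2\) and \(X=Q_i/2\), \(Y=Q_j/2\). By \eqref{eq:gi-covariance}, \((g_i,g_j)\) are \(m\) independent pairs of standard normals with correlation \(r=r_{ij}\). Hence \((X,Y)\) has Kibble's bivariate gamma law with shape \(\alpha\) \cite{Kibble1941,NadarajahKotz2006}. For \(|r|<1\) its density is
\[
f_\alpha(x)f_\alpha(y)\sum_{k=0}^\infty r^{2k}\frac{k!}{(\alpha)_k}
L_k^{(\alpha-1)}(x)L_k^{(\alpha-1)}(y),
\]
where \(f_\alpha\) is the \(\mathrm{Gamma}(\alpha,1)\) density. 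The generalized Laguerre polynomials are orthogonal under \(f_\alpha\), with \(\|L_k^{(\alpha-1)}\|^2=(\alpha)_k/k!\). Since \(\log(Q)=\log(2)+\log(X)\), this gives
\[
\Cov(\log(Q_i),\log(Q_j))
=\sum_{k=1}^\infty r^{2k}\frac{k!}{(\alpha)_k}\,\lambda_k^2,
\qquad
\lambda_k=\E\bigl(\log(X)L_k^{(\alpha-1)}(X)\bigr).
\]
The \(k=0\) term is removed by centering. The sum can be exchanged with the integral because \(\log(X)\in L^2(f_\alpha)\) and the Laguerre polynomials form a complete orthogonal system, so the series is the \(L^2\) inner product expansion. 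Equivalently, \(\E(\ell(Y)\mid X)=\sum_k r^{2k}\langle \ell,\widetilde L_k\rangle\widetilde L_k(X)\) with orthonormalized \(\widetilde L_k\); this is the exact analogue of Lemma~\ref{lem:app-mehler-hermite}.

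To compute \(\lambda_k\), I will use the Rodrigues formula
\(L_k^{(\alpha-1)}(x)=\frac{x^{1-\alpha}e^x}{k!}\frac{\dd^k}{\dd x^k}\bigl(e^{-x}x^{k+\alpha-1}\bigr)\). This gives
\(\lambda_k=\frac1{k!\Gamma(\alpha)}\int_0^\infty\log(x)\,\frac{\dd^k}{\dd x^k}(e^{-x}x^{k+\alpha-1})\dd x\). Integrating by parts \(k\) times and using \(\frac{\dd^k}{\dd x^k}\log(x)=(-1)^{k-1}(k-1)!x^{-k}\) gives
\(\lambda_k=-\frac{(k-1)!}{k!\Gamma(\alpha)}\Gamma(\alpha)=-\frac1k\). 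Substituting, \(\frac{k!}{(\alpha)_k}\lambda_k^2=\frac{(k-1)!}{k(\alpha)_k}\), which is \eqref{eq:app-cm}.

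I expect the main obstacle to be the technical care rather than the algebra, and there are two points. First, the boundary terms in the integrations by parts must vanish. At \(x=0\), each term has the form \(\log(x)\) or \(x^{-s}\) times a derivative of \(e^{-x}x^{k+\alpha-1}\) that is \(O(x^{\alpha+s})\) for \(0\le s<k\); since \(\alpha\ge1/2\), these terms tend to zero. At infinity, exponential decay kills them. Second, the endpoints \(|r|=1\) are not covered by the density expansion. There \(g_j=\pm g_i\) almost surely, so \(Q_i=Q_j\) and the covariance equals \(\psi_1(\alpha)\). The series at \(r^2=1\) converges to the same value, either by Parseval for \(\log(X)\) in the Laguerre basis or by monotone convergence as \(r^2\uparrow1\). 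So \eqref{eq:app-cm} holds on the whole range \(|r|\le1\). In particular, this identifies \(\sum_{k\ge1}\frac{(k-1)!}{k(\alpha)_k}=\psi_1(m/2)\).

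Given this, the bound is immediate. The \(k=1\) term of \(c_m(r)\) is \(r^2/\alpha=2r^2/m\), and every later term is nonnegative, which gives the lower inequality in \eqref{eq:app-cm-bound}. For \(k\ge2\) and \(|r|\le1\) we have \(r^{2k}\le r^4\), so
\[
c_m(r)-\frac{2r^2}{m}\le r^4\Bigl(\psi_1(m/2)-\frac2m\Bigr)\le\frac{4r^4}{m^2}
\]
by \eqref{eq:e-var} of Lemma~\ref{lem:second-chaos}.
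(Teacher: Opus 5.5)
Your proof is correct, but it takes a different route from the paper's.

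The paper never uses the Laguerre eigenbasis. It quotes Kibble's product-moment formula $\E(Q_i^sQ_j^t)=2^{s+t}\frac{\Gamma(\alpha+s)\Gamma(\alpha+t)}{\Gamma(\alpha)^2}\,{}_2F_1(-s,-t;\alpha;r^2)$. It notes that the separable gamma factors contribute nothing to the mixed derivative of the log-moment function, and reads off the covariance from $\partial_s(-s)_k|_{s=0}=-(k-1)!$. The endpoint value $c_m(\pm1)=\psi_1(m/2)$, which the bound needs, is then obtained by an $L^2$-continuity argument as $r\uparrow1$ together with monotone convergence of the series.

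Your coefficient $\lambda_k=-1/k$ is the same quantity in another guise. Since $\E(X^sL_k^{(\alpha-1)}(X))=\frac{\Gamma(\alpha+s)}{\Gamma(\alpha)}\frac{(-s)_k}{k!}$, you have $\lambda_k=\frac{1}{k!}\partial_s(-s)_k|_{s=0}$.

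Your route buys two things:
\begin{itemize}
\item The endpoint identity $\sum_{k\ge1}\frac{(k-1)!}{k(\alpha)_k}=\psi_1(m/2)$ comes from Parseval and completeness, with no limit in $r$.
\item The structure matches the chaos picture of Section~\ref{sec:proof-chaos}. The function $L_k^{(m/2-1)}(\|x\|^2/2)$ is an Ornstein--Uhlenbeck eigenfunction with eigenvalue $-2k$. So your $k$th Laguerre component of $\log(Q)$ is exactly $J_{2k}h_m$, and the $k=1$ term is $u_m$. The diagonal action $\E(\widetilde L_k(Y)\mid G)=r^{2k}\widetilde L_k(X)$ then follows from Lemma~\ref{lem:app-mehler-hermite} by linearity. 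Consequently \eqref{eq:app-cm-bound} is just $c_m(r)-2r^2/m=\Cov(e_m(G),e_m(G'))$, bounded by \eqref{eq:mehler}.
\end{itemize}
In exchange, the paper's route needs only one closed-form moment and one mixed derivative, with no integration-by-parts boundary bookkeeping.

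One small correction to your justification concerns the exchange of sum and integral for $|r|<1$. That step does not rest on completeness. It rests on the kernel series $\sum_k r^{2k}\widetilde L_k(x)\widetilde L_k(y)$ converging in $L^2(f_\alpha\otimes f_\alpha)$, which holds because its coefficients are square summable, together with $\log(x)\log(y)$ lying in that space. Completeness is what you need for Parseval at $r^2=1$.
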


\begin{proof}
Put \(\alpha=m/2\) and \(r=r_{ij}\).  By
\eqref{eq:gi-covariance}, the pair \((Q_i,Q_j)\) is the pair of squared
norms of two standard \(m\)-Gaussian vectors with coordinatewise
correlation \(r\).  Its Kibble bivariate-gamma product-moment formula is
\begin{equation}
\E(Q_i^sQ_j^t)
=2^{s+t}
\frac{\Gamma(\alpha+s)\Gamma(\alpha+t)}{\Gamma(\alpha)^2}
\,{}_2F_1(-s,-t;\alpha;r^2);
\label{eq:kibble-mellin}
\end{equation}
see \cite{Kibble1941,NadarajahKotz2006}. It follows by integrating the
Kibble density term by term; the latter reference derives the corresponding
general product moments. Because \(R\) is positive
definite, \(|r|<1\) when \(i\ne j\).  The defining hypergeometric series
\[
{}_2F_1(-s,-t;\alpha;r^2)
=\sum_{k=0}^\infty
\frac{(-s)_k(-t)_k}{(\alpha)_k\,k!}r^{2k}
\]
is locally uniformly convergent near \((s,t)=(0,0)\), so it may be
differentiated term by term. For \(k\ge1\),
\(\left.\partial_s(-s)_k\right|_{s=0}=-(k-1)!\), and the analogous
identity holds for \(t\). The gamma factors in
\eqref{eq:kibble-mellin} separate into a function of \(s\) and a function
of \(t\); they therefore contribute no mixed derivative to the logarithm
of the moment-generating function.  The first partial derivatives of the
hypergeometric factor vanish at the origin, so the mixed derivative of its
logarithm equals the mixed derivative of the factor itself.  Hence
\[
\Cov(\log(Q_i),\log(Q_j))
=\sum_{k=1}^\infty
\frac{(k-1)!^2}{(\alpha)_k\,k!}r^{2k}
=\sum_{k=1}^\infty
\frac{(k-1)!}{k(\alpha)_k}r^{2k},
\]
which proves \eqref{eq:app-cm}.

The \(k=1\) term is \(r^2/\alpha=2r^2/m\), and all coefficients are
nonnegative. We now justify the endpoints in the definition of \(c_m\).
Let \(G,Z\) be independent standard \(m\)-variate Gaussian vectors and set
\[
G_r=rG+\sqrt{1-r^2}\,Z,
\qquad 0\le r<1.
\]
Then \(G_r\to G\) almost surely as \(r\uparrow1\), and
\(\log(\|G_r\|^2)\to\log(\|G\|^2)\) in \(L^2\). Indeed, convergence holds in
probability, all variables \(\log(\|G_r\|^2)\) have the same square
integrable log chi square law, and their squared differences are uniformly
integrable. Hence the covariance tends to
\[
c_m(1)=\Var(\log(\chi_m^2))=\psi_1(\alpha).
\]
Monotone convergence of the nonnegative series gives the same endpoint
value. Since the series is even in \(r\), \(c_m(-1)=c_m(1)\).

The \(k=1\) term in the defining series for \(c_m(r)\) is
\(r^2/\alpha\). Since all the coefficients are nonnegative and
\(r^{2k}\le r^4\) for \(|r|\le1\) and \(k\ge2\), we obtain
\begin{align*}
0
&\le c_m(r)-\frac{r^2}{\alpha}
=\sum_{k=2}^{\infty}
  \frac{(k-1)!}{k(\alpha)_k}r^{2k}\le r^4\sum_{k=2}^{\infty}
  \frac{(k-1)!}{k(\alpha)_k} =r^4\left\{c_m(1)-\frac1\alpha\right\} =r^4\left\{\psi_1(\alpha)-\frac1\alpha\right\}.
\end{align*}
The trigamma bound proved in Lemma~\ref{lem:second-chaos} gives
\(\psi_1(\alpha)-1/\alpha\le1/\alpha^2=4/m^2\), proving
\eqref{eq:app-cm-bound}.
\end{proof}

\begin{lemma}[Assembly of the exact variance]
\label{lem:app-exact-variance}
The variance and comparison in
\eqref{eq:exact-var}--\eqref{eq:variance-comparison} hold.
\end{lemma}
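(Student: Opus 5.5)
We will expand the variance of the decomposition \eqref{eq:wishart-decomp}. Since \(\log(|R|)\) is deterministic,
\[
\Var_R(\log(|\Rhat|))
=\Var(\log(|W_0|))-2\sum_{i=1}^p\Cov(\log(|W_0|),\log(Q_i))
+\sum_{i,j}\Cov(\log(Q_i),\log(Q_j)).
\]
The first term follows from Bartlett's decomposition with independent factors, as in the proof of Lemma~\ref{lem:wishart-mean}:
\[
\Var(\log(|W_0|))=\sum_{j=1}^p\psi_1\left(\frac{m-j+1}{2}\right).
\]
By Lemma~\ref{lem:app-crosscov}, each cross term equals \(\psi_1(m/2)\). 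That lemma applies for every \(R\), because \(Q_i=v_i^\top W_0v_i\) with \(\|v_i\|=1\) and \(W_0\sim W_p(m,I_p)\) does not depend on \(R\). The diagonal terms \(i=j\) contribute \(p\,\psi_1(m/2)\), and the off-diagonal terms are \(c_m(r_{ij})\) by Lemma~\ref{lem:app-kibble}. Collecting everything gives
\[
\sum_{j=1}^p\psi_1\left(\frac{m-j+1}{2}\right)-2p\,\psi_1(m/2)+p\,\psi_1(m/2)+\sum_{i\ne j}c_m(r_{ij}).
\]
The \(j=1\) term of the first sum is \(\psi_1(m/2)\). It cancels one copy, leaving exactly \(v_{m,p}\) from \eqref{eq:vmp}. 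This proves \eqref{eq:exact-var}.

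For \eqref{eq:variance-comparison}, we note that \(\sum_{i\ne j}r_{ij}^2=\tr(A^2)=\tr((R-I_p)^2)\), since \(A\) has zero diagonal. Applying \eqref{eq:app-cm-bound} termwise gives
\[
0\le\sum_{i\ne j}c_m(r_{ij})-\frac2m\tr(A^2)\le\frac4{m^2}\sum_{i\ne j}r_{ij}^4\le\frac4{m^2}\sum_{i\ne j}r_{ij}^2,
\]
where the last step uses \(|r_{ij}|\le1\). This is the stated bound.

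The final sentence of the proposition, that the exact scale \(\tau_{m,p}(R)\) may replace the denominator, follows from the ratio
\[
1\le\frac{\tau^2_{m,p}(R)}{s_R^2}\le1+\frac{(4/m^2)a}{2a/m}=1+\frac2m\to1,
\]
uniformly in \(R\). When \(a=0\), the ratio is exactly \(1\). Slutsky's theorem combined with Theorem~\ref{thm:exact} then finishes the argument.

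The only delicate points are already handled by the cited lemmas: the Kibble mixed-derivative computation and the finiteness of all second moments. The finiteness follows because the log-chi-square variables and \(\log(|W_0|)\) lie in \(L^2\), so the covariance expansion is legitimate. No step here is a real obstacle; the work consists of bookkeeping the cancellation of the \(\psi_1(m/2)\) terms correctly.
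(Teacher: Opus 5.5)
Your proposal is correct and follows the paper's proof essentially step for step. It uses the same variance expansion of \eqref{eq:wishart-decomp} via Bartlett's decomposition, Lemma~\ref{lem:app-crosscov}, and Lemma~\ref{lem:app-kibble}, the same cancellation of the $j=1$ trigamma term against one copy of $\psi_1(m/2)$, and the same termwise use of \eqref{eq:app-cm-bound} with $r_{ij}^4\le r_{ij}^2$; your ratio bound $1\le\tau_{m,p}^2(R)/s_R^2\le1+2/m$ followed by Slutsky simply spells out the paper's closing relative-error remark.
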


\begin{proof}
Write
\[
S_{m,p}=\sum_{j=1}^p\psi_1\left(\frac{m-j+1}{2}\right).
\]
Bartlett's decomposition gives
\[
\Var(\log|W_0|)=S_{m,p}.
\]
By Lemma~\ref{lem:app-kibble},
\[
\Var\left(\sum_i\log Q_i\right)
=p\psi_1(m/2)+\sum_{i\ne j}c_m(r_{ij}).
\]
Lemma~\ref{lem:app-crosscov} gives
\[
\Cov\left(\log|W_0|,\sum_i\log Q_i\right)=p\psi_1(m/2).
\]
Taking the variance in the exact determinant identity
\eqref{eq:wishart-decomp} and substituting these three expressions,
\begin{align*}
\Var(\log(|\Rhat|))
&=S_{m,p}
+p\psi_1(m/2)+\sum_{i\ne j}c_m(r_{ij})
-2p\psi_1(m/2)\\
&=S_{m,p}-p\psi_1(m/2)
+\sum_{i\ne j}c_m(r_{ij}) =v_{m,p}+\sum_{i\ne j}c_m(r_{ij}).
\end{align*}
The final equality is the definition \eqref{eq:vmp}, with its missing
\(j=1\) term made explicit.

Because \(A=R-I_p\) has off-diagonal entries \(r_{ij}\) and zero diagonal,
\[
\sum_{i\ne j}r_{ij}^2=\tr(A^2)=a,
\qquad
\sum_{i\ne j}r_{ij}^4\le a.
\]
Summing \eqref{eq:app-cm-bound} over the ordered pairs \(i\ne j\)
and using these two identities gives
\begin{align*}
0
&\le
\Var(\log|\widehat R|)
-\left(v_{m,p}+\frac{2a}{m}\right) =
\sum_{i\ne j}
\left\{c_m(r_{ij})-\frac{2r_{ij}^2}{m}\right\} \le
\frac{4}{m^2}\sum_{i\ne j}r_{ij}^4
\le
\frac{4a}{m^2}.
\end{align*}
If \(a>0\), division by
\(v_{m,p}+2a/m\ge2a/m\) makes the relative error at most \(2/m\).
If \(a=0\), the error is exactly zero.  This proves
Proposition~\ref{prop:exact-cumulants}, including the claim that the exact
scale may replace \(s_R\).
\end{proof}

\section{A self-contained Lyapunov proof of the null CLT}
\label{app:null-clt}

The null CLT is cited in Lemma~\ref{lem:null-clt}.  This appendix supplies
a complete proof for readers who want to verify directly that the beta
product covers every sequence \(m\ge p\), including \(m=p\).

\begin{lemma}[Lyapunov verification for the null beta product]
\label{lem:app-null-lyapunov}
The convergence in \eqref{eq:null-clt} holds whenever \(p\to\infty\) and
\(m\ge p\).
\end{lemma}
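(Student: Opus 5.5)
We start from the beta factorization \eqref{eq:null-beta-product}. It gives $\log(|\Rhat_0|)\stackrel d=\sum_{j=2}^p\log(B_j)$ with independent summands, so the Lyapunov central limit theorem for triangular arrays applies directly. Set $\alpha_j=(m-j+1)/2$ and $\beta_j=(j-1)/2$. The cumulant generating function of $\log(B_j)$ is
\[
\log\Gamma(\alpha_j+t)-\log\Gamma(\alpha_j)-\log\Gamma(\alpha_j+\beta_j+t)+\log\Gamma(\alpha_j+\beta_j).
\]
Note that $\alpha_j+\beta_j=m/2$. Its first and second derivatives at $t=0$ are $\psi(\alpha_j)-\psi(m/2)$ and $\psi_1(\alpha_j)-\psi_1(m/2)$, so the sums over $j$ are exactly $b_{m,p}$ and $v_{m,p}$.

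Rather than absolute third moments, we verify the Lyapunov condition with fourth central moments. For each $j$,
\[
\E\bigl((Y_j-\E Y_j)^4\bigr)=\kappa_{4,j}+3\kappa_{2,j}^2,
\qquad Y_j=\log(B_j).
\]
The cumulants are polygamma differences:
\[
\kappa_{k,j}=\psi^{(k-1)}(\alpha_j)-\psi^{(k-1)}(m/2).
\]
It therefore suffices to show
\[
\frac{\sum_j\kappa_{4,j}}{v_{m,p}^2}\to0,
\qquad
\frac{\sum_j\kappa_{2,j}^2}{v_{m,p}^2}\to0.
\]

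For the second ratio, we use $\max_j\kappa_{2,j}/v_{m,p}\to0$. The largest term is $j=p$, where $\alpha_p=(m-p+1)/2\ge1/2$. Since $\psi_1(1/2)=\pi^2/2$, every $\kappa_{2,j}$ is bounded by a constant, and $\sum_j\kappa_{2,j}^2\le(\max_j\kappa_{2,j})\,v_{m,p}$. When $v_{m,p}\to\infty$, which covers the hard edge and the proportional regime, this ratio tends to zero.

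When $v_{m,p}$ stays bounded, including the dilute case, we need to show $\max_j\kappa_{2,j}=o(v_{m,p})$ directly. We use the integral representation
\[
\kappa_{2,j}=\int_{\alpha_j}^{m/2}\{-\psi_2(x)\}\,dx
\]
together with the bounds $-\psi_2(x)\asymp x^{-2}+x^{-3}$. These give $\kappa_{2,j}\asymp (j-1)/\{(m-j+1)m\}$ up to the $x^{-3}$ correction near $0$. Summing gives $v_{m,p}\gtrsim p^2/m^2$, which matches \eqref{eq:v-lower}. The maximal term is $\lesssim p/\{m(m-p+1)\}+(m-p+1)^{-2}\mathbf 1$ (the indicator active only when $m-p$ is small). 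We then split into two cases. If $m-p\ge p$, the maximal term is $\lesssim p/m^2$, which is $o(p^2/m^2)$ because $p\to\infty$. If $m-p<p$, then $m\asymp p$, and $v_{m,p}\gtrsim\sum_{r}r/\{m(m-r)\}$ grows at least like a constant times $\log\bigl(m/(m-p+1)\bigr)$ plus $p/m$. In this case $v_{m,p}$ is bounded below by a positive constant, and we check that the bounded largest terms are still negligible by counting: only $O(1)$ indices have $\alpha_j=O(1)$.

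The fourth cumulants are handled the same way, using
\[
\kappa_{4,j}=\int_{\alpha_j}^{m/2}\{-\psi_4(x)\}\,dx\lesssim \alpha_j^{-3}\mathbf 1\{j\text{ large}\}+\frac{j}{m\,\alpha_j^{3}}.
\]
Summing over $j$ gives $\sum_j\kappa_{4,j}\lesssim \sum_{s\ge (m-p+1)/2}s^{-3}+p^2/m^4$. In the regime $v_{m,p}\to\infty$ this sum is $O(1)=o(v_{m,p}^2)$. In the dilute regime it is $O(p^2/m^4)=o(p^4/m^4)$.

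The main obstacle is the regime $m-p=O(1)$, where individual terms $\kappa_{2,p}$ do not vanish. There the argument relies on $v_{m,p}\sim2\log p\to\infty$ while the contribution of the last few factors stays $O(1)$. The Lyapunov ratio is then $O(1/\log^2p)$, which is slow but sufficient. We will organize the bounds uniformly by writing everything in terms of the gap $g=m-p+1\ge1$ and the ratio $p/m$, so that no rate of $m-p$ is excluded.
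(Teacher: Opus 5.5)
Your architecture is the paper's: independent beta factors, cumulants as polygamma differences, a fourth-moment Lyapunov check via $\E\{(Y_j-\E Y_j)^4\}=\kappa_{4,j}+3\kappa_{2,j}^2$, and the bound $\sum_j\kappa_{2,j}^2\le(\max_j\kappa_{2,j})v_{m,p}$. Your treatment of sequences with $v_{m,p}\to\infty$ is sound; it uses $\max_j\kappa_{2,j}\le\psi_1(1/2)$ and $\sum_j\kappa_{4,j}=O(1)$. Your treatment of $m-p\ge p$ for $\kappa_{2,j}$ is also sound. The gap is the remaining case, $m-p<p$ with $v_{m,p}$ bounded.

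You assert that the proportional regime has $v_{m,p}\to\infty$. It does not: when $p/m\to y\in(0,1)$, $v_{m,p}\to-2\{y+\log(1-y)\}<\infty$. So every proportional sequence with $y>1/2$ lands exactly in this case. There you say the bounded largest terms are negligible ``by counting,'' because only $O(1)$ indices have $\alpha_j=O(1)$. That cannot work. If $v_{m,p}$ stays bounded and even one index has $\kappa_{2,j}\ge c>0$, then the Lyapunov ratio is at least $3c^2/v_{m,p}^2$ (since $\kappa_{4,j}\ge0$), which does not tend to zero. The same objection applies to $\sum_j\kappa_{4,j}$: outside the dilute case you only control it as $O(1)$, and $O(1)$ is not $o(v_{m,p}^2)$ when $v_{m,p}$ is bounded.

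The missing idea is that this case never contains small $\alpha_j$. Your own lower bound, written correctly as $v_{m,p}\ge2\{\log(m/(m-p+1))-(p-1)/m\}$ (minus, not plus), shows that bounded $v_{m,p}$ forces $m-p+1\ge cm\to\infty$. Then $\max_j\kappa_{2,j}\le\psi_1((m-p+1)/2)\le C/(m-p+1)\to0$ and $\sum_j\kappa_{4,j}\le C(m-p+1)^{-2}\to0$. Meanwhile $v_{m,p}\ge p(p-1)/m^2$ stays bounded below. This is exactly the paper's Regime~2 ($p>m/2$, $m-p\to\infty$).

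Two smaller repairs are also needed:
\begin{itemize}
\item The split ``$v_{m,p}\to\infty$ versus bounded'' is not exhaustive for an arbitrary sequence. You need the subsequence principle, as the paper uses at the end of its proof.
\item In the dilute case you cannot keep the tail term $\sum_{s\ge(m-p+1)/2}s^{-3}\asymp m^{-2}$ from your displayed fourth-cumulant bound. Against $v_{m,p}^2\asymp p^4/m^4$ it gives $m^2/p^4$, which diverges when $p\ll\sqrt m$. Use instead the mean-value bound $\kappa_{4,j}\le C(j-1)/m^4$ for $\alpha_j\ge m/4$, as in the paper's Regime~1.
\end{itemize}
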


\begin{proof}
Use the independent beta factors in \eqref{eq:null-beta-product}. Set
\[
a_j=\frac{m-j+1}{2},
\qquad
b_j=\frac{j-1}{2},
\qquad
a_j+b_j=\frac m2,
\]
and
write \(L_j=\log(B_j)\).  The beta Mellin transform gives
\begin{align}
K_j(t):=\log(\E(e^{tL_j}))
= \log(\Gamma(a_j+t))-\log(\Gamma(a_j)) +\log(\Gamma(m/2))-\log(\Gamma(m/2+t)).
\label{eq:app-beta-cgf}
\end{align}
Differentiating at zero,
\begin{align}
\E(L_j)
&=\psi(a_j)-\psi(m/2),\notag\\
w_j:=\Var(L_j)
&=\psi_1(a_j)-\psi_1(m/2),\notag\\
d_j:=\operatorname{cum}_4(L_j)
&=\psi_3(a_j)-\psi_3(m/2).
\label{eq:app-beta-cumulants}
\end{align}
Summing the first two lines gives \(b_{m,p}\) and \(v_{m,p}\).

We shall repeatedly use the elementary consequence of the polygamma series:
for every fixed integer \(\ell\ge1\), there is a constant \(C_\ell\) such
that
\begin{equation}
|\psi_\ell(x)|
\le C_\ell(x^{-\ell}+x^{-\ell-1}),
\qquad x>0.
\label{eq:app-polygamma-bound}
\end{equation}
Indeed, repeated termwise differentiation of the locally uniformly
convergent trigamma series in
\cite[Eq.~(5.15.1), p.~144]{OlverEtAl2010} gives
\begin{equation}
|\psi_\ell(x)|
=\ell!\sum_{k=0}^\infty(x+k)^{-\ell-1},
\qquad x>0.
\label{eq:app-polygamma-series}
\end{equation}
Here the sign of \(\psi_\ell(x)\) is \((-1)^{\ell+1}\).
The decreasing sum is bounded by its first term plus the integral of
the same function, which proves \eqref{eq:app-polygamma-bound}.

Put \(Y_j=L_j-\E(L_j)\). We verify Lyapunov's condition with exponent four:
\[
\frac{\sum_{j=2}^p\E(|Y_j|^4)}{v_{m,p}^2}\longrightarrow0.
\]
There are three regimes.

\smallskip
\noindent\emph{Regime 1: \(p\le m/2\).}
Here \(a_j\ge(m-p+1)/2\ge m/4\).  Apply the mean-value theorem to the
second and fourth cumulants in \eqref{eq:app-beta-cumulants}.  The interval
length is \((j-1)/2\), while \eqref{eq:app-polygamma-bound} bounds
\(-\psi_2(x)\) by \(C/m^2\) and \(-\psi_4(x)\) in absolute value by
\(C/m^4\) on this interval.  Hence
\[
\begin{aligned}
w_j&\le\frac{C(j-1)}{m^2},
&0\le d_j&\le\frac{C(j-1)}{m^4},\\
\max_jw_j&\le\frac{Cp}{m^2},
&\sum_{j=2}^pd_j&\le\frac{Cp^2}{m^4},\\
\frac{\max_jw_j}{v_{m,p}}&=O(p^{-1}),
&\frac{\sum_jd_j}{v_{m,p}^2}&=O(p^{-2}).
\end{aligned}
\]

\smallskip
\noindent\emph{Regime 2: \(p>m/2\) and \(d=m-p\to\infty\).}
The numbers \(2a_j=m-j+1\) range from \(m-1\) down to \(d+1\).
The bound \eqref{eq:app-polygamma-bound} gives
\[
\sum_{j=2}^pd_j
\le C\sum_{k=d+1}^{m-1}k^{-3}
\le\frac{C}{(d+1)^2}\longrightarrow0,
\qquad
\max_jw_j
\le\psi_1\left(\frac{d+1}{2}\right)
\le\frac{C}{d+1}\longrightarrow0.
\]
In this regime, \eqref{eq:v-lower} is bounded below by a positive constant
because \(p/m>1/2\).  Therefore \(\max_jw_j/v_{m,p}\to0\) and
\(\sum_jd_j/v_{m,p}^2\to0\).

\smallskip
\noindent\emph{Regime 3: \(d=m-p\) remains bounded.}
The same estimates give
\[
\sum_{j=2}^pd_j=O(1),
\qquad
\max_jw_j=O(1).
\]
We need the sharper fact \(v_{m,p}\to\infty\).  Reindexing
\eqref{eq:vmp} by \(k=m-j+1\),
\[
v_{m,p}
=\sum_{k=d+1}^{m-1}\psi_1(k/2)
-(p-1)\psi_1(m/2).
\]
The trigamma series and integral comparison give
\[
\psi_1(k/2)\ge\frac2k,
\qquad
\psi_1(m/2)\le\frac2m+\frac4{m^2}.
\]
Consequently,
\[
v_{m,p}
\ge2\sum_{k=d+1}^{m-1}\frac1k
-(p-1)\left(\frac2m+\frac4{m^2}\right)
\ge c\log(m)
\]
for all sufficiently large \(m\), where \(c>0\) may depend on a bound for
\(d\).  Thus \(\max_jw_j/v_{m,p}\to0\) and
\(\sum_jd_j/v_{m,p}^2\to0\).

It remains to verify Lyapunov's condition.  The moment--cumulant identity
gives
\[
\E(Y_j^4)=d_j+3w_j^2,
\qquad
\sum_{j=2}^pw_j^2
\le(\max_jw_j)\sum_{j=2}^pw_j
=(\max_jw_j)v_{m,p}.
\]
Therefore, in all three regimes,
\[
\frac{\sum_{j=2}^p\E(Y_j^4)}{v_{m,p}^2}
\le\frac{\sum_jd_j}{v_{m,p}^2}
+3\frac{\max_jw_j}{v_{m,p}}\longrightarrow0.
\]
The Lyapunov triangular-array CLT
\cite[Theorem~4.9, p.~126]{Petrov1995} yields
\[
\frac{\sum_{j=2}^pY_j}{\sqrt{v_{m,p}}}\Rightarrow N(0,1).
\]
Since \(\sum_jY_j=\log(|\Rhat_0|)-b_{m,p}\), this is
\eqref{eq:null-clt}.  If a sequence does not remain in one regime, every
subsequence has a further subsequence in one of the three; the subsequence
principle completes the proof.
\end{proof}

\section{Equivalence with the elementary normalization}
\label{app:normalization}

\begin{lemma}[Uniform normalization comparison for every \(m>p\)]
\label{lem:app-normalization}
If \(p\to\infty\) and \(m>p\), then
\[
\frac{b_{m,p}-\mu_{0,m,p}}{\sigma_{0,m,p}}\longrightarrow0,
\qquad
\frac{v_{m,p}}{\sigma_{0,m,p}^2}\longrightarrow1.
\]
\end{lemma}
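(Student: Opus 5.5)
Write $d=m-p\ge1$ and $y=p/m\in(0,1)$. The plan is to compare the two sums $b_{m,p}$ and $v_{m,p}$ with Riemann integrals through asymptotic expansions of the polygamma functions, and to split the argument into two regimes handled by subsequences. In the \emph{interior/dilute regime} $d\to\infty$ I use expansions in $x=(m-j+1)/2\ge (d+1)/2\to\infty$. In the \emph{fixed-gap regime} $d$ bounded, both $\sigma_{0,m,p}^2$ and $v_{m,p}$ diverge logarithmically, and I only need leading-order agreement. The subsequence principle then joins the two regimes, as in Lemma~\ref{lem:app-null-lyapunov}.

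\textbf{Variance.} I would use the trigamma expansion $\psi_1(x)=x^{-1}+\tfrac12x^{-2}+O(x^{-3})$ from \cite[Eq.~(5.15.8)]{OlverEtAl2010}. With $k=m-j+1$, this gives $\psi_1(k/2)=2/k+2/k^2+O(k^{-3})$. Substituting into \eqref{eq:vmp}, I obtain
\[
v_{m,p}=2\sum_{k=d+1}^{m-1}\Bigl(\frac1k-\frac1m\Bigr)+2\sum_{k=d+1}^{m-1}\Bigl(\frac1{k^2}-\frac1{m^2}\Bigr)+O\Bigl(\sum_{k>d}k^{-3}\Bigr).
\]
By Euler--Maclaurin, the first sum equals $-2\log(1-y)-2y+O(1/(d+1))$, which is $\sigma^2_{0,m,p}+O(1/(d+1))$. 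The second sum is $O(1/(d+1))$. The relative error is then controlled by dividing by $\sigma_{0,m,p}^2$. Expanding $-\log(1-y)-y$ shows that $\sigma_{0,m,p}^2$ is comparable to $y^2\wedge 1$ for small $y$ and grows like $2\log(m/d)$ as $y\to1$.

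The delicate case is dilute growth, $y\to0$. There $\sigma_0^2\sim p^2/m^2$, while an absolute error of order $1/d\approx 1/m$ is \emph{not} small compared with $p^2/m^2$. I therefore have to exploit cancellation and not merely bound the errors. I would rewrite each summand as an integral, $\psi_1(a_j)-\psi_1(m/2)=\int_{a_j}^{m/2}(-\psi_2)$, exactly as in the proof of \eqref{eq:v-lower}. Using $-\psi_2(x)=x^{-2}+x^{-3}+O(x^{-4})$, the summand is $2r/(m(m-r))+O(r/m^3)$ with $r=j-1$. Summing over $r$ gives $2\sum_r r/(m(m-r))+O(p^2/m^3)$. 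This sum is compared with $\sigma_0^2=2\int_0^{y}\frac{s}{1-s}\,ds\cdot$(appropriately scaled). The Riemann-sum error is $O(p/m^2)$, which is relatively $O(1/p)\to0$. The same integral representation works uniformly whenever $d\to\infty$, so it also covers the interior regime. In the fixed-gap regime, $v_{m,p}=2\log m+O(1)$, as recorded in Section~\ref{sec:hard-edge}, and $\sigma_0^2=2\log(m/d)+O(1)$, so the ratio tends to $1$.

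\textbf{Centering.} I would proceed the same way using $\psi(x)=\log x-\tfrac1{2x}+O(x^{-2})$. This gives
\[
b_{m,p}=\sum_{r=1}^{p-1}\Bigl[\log\Bigl(1-\frac rm\Bigr)-\frac1{m-r}+\frac1m\Bigr]+O\Bigl(\sum_k k^{-2}\Bigr).
\]
Applying Stirling, or Euler--Maclaurin to $\sum\log(1-r/m)=\log\bigl(m!/(d!\,m^{p})\bigr)$ up to indexing, reproduces $(p-m+\tfrac12)\log(1-y)-p$ plus lower-order terms. The harmonic terms contribute $\log(1-y)$-type corrections and $p/m$. These should assemble into $\mu_{0,m,p}$ up to an error of order $O(1/(d+1))+O(p/m^2)$. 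The required statement is that this error is $o(\sigma_0)$. When $d\to\infty$ with $y$ bounded away from $0$, this is immediate. When $y\to0$, I need the error to be $o(p/m)$; the same $-\psi_2$ integral representation applied to $\psi$ differences gives a Riemann error $O(p/m^2)+O(p^2/m^3)$, which is $o(p/m)$. When $d$ is bounded, the error is $O(1)$ against $\sigma_0\asymp\sqrt{\log m}\to\infty$, which suffices.

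I expect the \textbf{main obstacle} to be the bookkeeping of the constant and $\tfrac12\log(1-y)$ terms in the centering. If these are mismatched by even a bounded amount, the result fails in the dilute regime, where $\sigma_0\to0$. I would therefore first verify the exact second-order expansion of $b_{m,p}$ in powers of $p/m$ against $\mu_{0,m,p}$, that is, check that both equal $-p^2/(2m)+O(p/m+p^3/m^2)$ with matching $p/m$ terms, before handling the other regimes.
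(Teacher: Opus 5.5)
Your plan is essentially the paper's proof. Uniform digamma/trigamma expansions, Stirling applied to $\log\bigl(\Gamma(m)/(\Gamma(d+1)m^{p-1})\bigr)$, and the harmonic-number comparison give $|b_{m,p}-\mu_{0,m,p}|$ and $|v_{m,p}-\sigma^2_{0,m,p}|$ of order $1/d$ for every $d\ge1$, and only the dilute variance needs the separate $\int(-\psi_2)$ computation you describe. That $O(1/d)$ bound already covers fixed gaps, so you should not cite Section~\ref{sec:hard-edge}: its hard-edge display is itself derived from this lemma, which makes the citation circular.

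One bookkeeping fix is needed, and it sits exactly at the point you flagged. Stirling gives $(d+\tfrac12)L-p=(p-m-\tfrac12)\log(1-y)-p$ with $L=\log(m/d)$, not $(p-m+\tfrac12)\log(1-y)-p$ as you wrote. The harmonic term $-(H_{m-1}-H_d)=\log(1-y)+O(1/d)$ raises the coefficient to $p-m+\tfrac12$, and together with $(p-1)/m$ this yields $\mu_{0,m,p}+O(1/d)$. Taken literally, your version would leave a mismatch of size $\log(1-y)$, which is of the same order as $\sigma_{0,m,p}$ in the dilute regime.
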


\begin{proof}
Put
\[
d=m-p\ge1,
\qquad y=\frac pm,
\qquad L=\log\left(\frac md\right).
\]
Substituting \(1-p/m=d/m\) into
\eqref{eq:elementary-null-mean}--\eqref{eq:elementary-null-var} gives the exact
rewriting
\begin{equation}
\mu_{0,m,p}
=\left(d-\frac12\right)L-p+\frac pm,
\qquad
\sigma_{0,m,p}^2=2\left(L-\frac pm\right).
\label{eq:app-elementary-rewrite}
\end{equation}

The digamma and trigamma expansions in
\cite[Eq.~(5.11.2), p.~140, and Eq.~(5.15.8),
p.~144]{OlverEtAl2010} give, as \(k\to\infty\),
\begin{equation}
\psi(k/2)=\log(k/2)-\frac1k+O(k^{-2}),
\qquad
\psi_1(k/2)=\frac2k+O(k^{-2}).
\label{eq:app-polygamma-expansions}
\end{equation}
After enlarging the remainder constants over a finite initial range, the
bounds are uniform for every integer \(k\ge1\).  Reindex
\eqref{eq:bmp}--\eqref{eq:vmp} by \(k=m-j+1\).  Summing
\eqref{eq:app-polygamma-expansions} gives
\begin{align}
b_{m,p}
={}&\log\left(\frac{\Gamma(m)}{\Gamma(d+1)m^{p-1}}\right)
-(H_{m-1}-H_d)+\frac{p-1}{m} +O\left(\sum_{k=d+1}^{m-1}k^{-2}+\frac p{m^2}\right),
\label{eq:app-b-sum}\\
v_{m,p}
={}&2(H_{m-1}-H_d)-\frac{2(p-1)}m +O\left(\sum_{k=d+1}^{m-1}k^{-2}+\frac p{m^2}\right).
\label{eq:app-v-sum}
\end{align}
Here \(H_r=\sum_{k=1}^r1/k\).  The logarithms in the digamma sum form the
gamma ratio, the \(1/k\) terms form the harmonic-number difference, and the
displayed error is the sum of the pointwise remainders.

We now make the Stirling remainder uniform, including fixed \(d\).
The positive real error bound for Stirling's series
\cite[Section~5.11(ii), pp.~140--141]{OlverEtAl2010} implies that a
universal constant \(C\) exists such that, for all integers \(m\ge2\) and
\(d\ge1\),
\begin{align}
\log(\Gamma(m))
&=\left(m-\frac12\right)\log(m)-m
  +\frac12\log(2\pi)+r_m,
& |r_m|&\le\frac{C}{m},
\label{eq:uniform-stirling-m}\\
\log(\Gamma(d+1))
&=\left(d+\frac12\right)\log(d)-d
  +\frac12\log(2\pi)+\widetilde r_d,
& |\widetilde r_d|&\le\frac{C}{d}.
\label{eq:uniform-stirling-d}
\end{align}
The second line is the factorial form, obtained from the first line and
\(\Gamma(d+1)=d\Gamma(d)\). Since \(p=m-d\), subtraction gives
\begin{equation}
\log\left(\frac{\Gamma(m)}
{\Gamma(d+1)m^{p-1}}\right)
=\left(d+\frac12\right)L-p+O(d^{-1}),
\label{eq:uniform-gamma-ratio}
\end{equation}
with a universal implied constant.

The harmonic and quadratic remainders are also uniform. Integral comparison
gives
\[
0\le L-(H_{m-1}-H_d)
\le\log\left(1+\frac1d\right)\le\frac1d,
\qquad
\sum_{k=d+1}^{m-1}k^{-2}+\frac p{m^2}
\le\frac1d+\frac1m\le\frac2d.
\]
Substitution of these bounds and \eqref{eq:uniform-gamma-ratio} into
\eqref{eq:app-b-sum}--\eqref{eq:app-v-sum} yields
\[
\begin{aligned}
b_{m,p}
&=\left(d-\frac12\right)L-p+\frac{p-1}{m}+O(d^{-1})
=\mu_{0,m,p}+O(d^{-1}),\\
v_{m,p}
&=2L-\frac{2(p-1)}m+O(d^{-1})
=\sigma_{0,m,p}^2+O(d^{-1}).
\end{aligned}
\]
Thus, for a universal constant \(C\),
\begin{equation}
|b_{m,p}-\mu_{0,m,p}|\le\frac Cd,
\qquad
|v_{m,p}-\sigma_{0,m,p}^2|\le\frac Cd
\label{eq:app-coarse-equivalence}
\end{equation}
for every \(m>p\). In particular, no asymptotic formula with a fixed
argument has been used.

We first consider subsequences on which \(p\le m/2\).  Extract a further
subsequence along which \(y=p/m\to y_0\in[0,1/2]\).  If \(y_0=0\), the
following dilute expansion is needed.  For \(r=1,\ldots,p-1\), the
fundamental theorem of calculus gives
\[
\psi_1\left(\frac{m-r}{2}\right)-\psi_1\left(\frac m2\right)
=\int_{(m-r)/2}^{m/2}-\psi_2(u)\dd u.
\]
For \(\ell=2\), \eqref{eq:app-polygamma-series} gives
\(-\psi_2(u)=2\sum_{k=0}^\infty(u+k)^{-3}\).
Integral comparison bounds this expression between \(u^{-2}\) and
\(u^{-2}+2u^{-3}\).  Hence, uniformly on the interval \(u\ge m/4\),
\(-\psi_2(u)=u^{-2}+O(u^{-3})\).
Summing the integrals,
\begin{equation}
v_{m,p}
=\frac{p(p-1)}{m^2}
+O\left(\frac{p^3}{m^3}\right).
\label{eq:app-sparse-v}
\end{equation}
The power series for \(-\log(1-y)\), with \(0\le y\le1/2\), gives
\[
\sigma_{0,m,p}^2
=2\{-\log(1-y)-y\}
=y^2+O(y^3),
\qquad
\sigma_{0,m,p}\ge y.
\]
Because \(d\ge m/2\), equations
\eqref{eq:app-coarse-equivalence} and \eqref{eq:app-sparse-v} give, when
\(y\to0\),
\[
\frac{|b_{m,p}-\mu_{0,m,p}|}{\sigma_{0,m,p}}
\le\frac Cp\longrightarrow0,
\qquad
\left|\frac{v_{m,p}}{\sigma_{0,m,p}^2}-1\right|
\le C\left(\frac1p+\frac pm\right)\longrightarrow0.
\]
If instead \(y_0\in(0,1/2]\), then
\[
\sigma_{0,m,p}^2
\longrightarrow2\{-\log(1-y_0)-y_0\}>0.
\]
Since \(d\ge m/2\to\infty\), division of
\eqref{eq:app-coarse-equivalence} by
\(\sigma_{0,m,p}\) and \(\sigma_{0,m,p}^2\) proves the two desired limits
on this further subsequence.  Thus every subsequence in the half
\(p\le m/2\) has a further subsequence on which both limits hold; the
subsequence principle proves them throughout this half. It remains to consider \(p>m/2\).  Start with an arbitrary subsequence.
If \(d\) is unbounded along it, extract a further subsequence on which
\(d\to\infty\).  Then
\[
\sigma_{0,m,p}^2
=2\{-\log(d/m)-(1-d/m)\}
\ge2\{\log(2)-1/2\}>0.
\]
Therefore \(d\sigma_{0,m,p}\to\infty\) and
\(d\sigma_{0,m,p}^2\to\infty\).  If \(d\) is bounded along the chosen
subsequence, extract a further subsequence on which the integer \(d\) is
constant.  On that further subsequence,
\eqref{eq:app-elementary-rewrite} and
\eqref{eq:app-coarse-equivalence} give
\[
\begin{aligned}
\sigma_{0,m,p}^2&=2\log(m)+O_d(1),\\
\frac{b_{m,p}-\mu_{0,m,p}}{\sigma_{0,m,p}}
&=O_d((\log(m))^{-1/2}),\\
\frac{v_{m,p}}{\sigma_{0,m,p}^2}-1
&=O_d((\log(m))^{-1}).
\end{aligned}
\]
In particular, the same two products diverge. Dividing the two inequalities in
\eqref{eq:app-coarse-equivalence} by \(\sigma_{0,m,p}\) and
\(\sigma_{0,m,p}^2\), respectively, proves the two desired limits on the
extracted further subsequence.  Thus every subsequence in the half
\(p>m/2\) has a further subsequence on which both limits hold.  The
subsequence principle, together with the preceding \(p\le m/2\) argument,
proves the result for every sequence with \(m>p\).
\end{proof}

\section*{Acknowledgments}
The author thanks Tuan Pham for helpful comments on an earlier version.
OpenAI's GPT-5.6 Sol Ultra was used to assist with literature
searches and organization, mathematical drafting and checks, language
editing, \LaTeX\ formatting, and computational checks. The author
independently verified and retains full responsibility for the
mathematical content, originality, citations, and text.

\bibliographystyle{plainnat}
\bibliography{new2026_R6_refs}

\end{document}